\numberwithin{equation}{section}
\numberwithin{figure}{section}
\theoremstyle{plain}
\newtheorem{thm}{\protect\theoremname}[section]
  \theoremstyle{remark}
  \newtheorem{rem}[thm]{\protect\remarkname}
  \theoremstyle{plain}
  \newtheorem{lem}[thm]{\protect\lemmaname}
  \theoremstyle{plain}
  \newtheorem{prop}[thm]{\protect\propositionname}
    \theoremstyle{plain}
  \newtheorem{cor}[thm]{\protect\corollaryname}
    \theoremstyle{definition}
  \newtheorem{defn}[thm]{\protect\definitionname}
 \newlist{casenv}{enumerate}{4}
 \setlist[casenv]{leftmargin=*,align=left,widest={iiii}}
 \setlist[casenv,1]{label={{\itshape\ \casename} \arabic*.},ref=\arabic*}
 \setlist[casenv,2]{label={{\itshape\ \casename} \roman*.},ref=\roman*}
 \setlist[casenv,3]{label={{\itshape\ \casename\ \alph*.}},ref=\alph*}
 \setlist[casenv,4]{label={{\itshape\ \casename} \arabic*.},ref=\arabic*}
  \providecommand{\lemmaname}{Lemma}
  \providecommand{\propositionname}{Proposition}
  \providecommand{\remarkname}{Remark}
 \providecommand{\casename}{Case}
\providecommand{\theoremname}{Theorem}
\providecommand{\corollaryname}{Corollary}
  \providecommand{\definitionname}{Definition}
\begin{document}
\global\long\def\Z{\mathbb{\mathbf{Z}}}
\global\long\def\Spec{\mathrm{Spec}}
\global\long\def\G{\mathbb{G}}
\global\long\def\GL{\mathrm{GL}}
\global\long\def\O{\mathcal{O}}
\global\long\def\SL{\mathrm{SL}}
\global\long\def\C{\mathbf{C}}
\global\long\def\tr{\mathrm{tr}}
\global\long\def\Aut{\mathrm{Aut}}
\global\long\def\Out{\mathrm{\mathrm{Out}}}
\global\long\def\Q{\mathbf{Q}}
\global\long\def\R{\mathbf{R}}
\global\long\def\Perms{\mathrm{Perms}}
\global\long\def\Fib{\mathrm{Fib}}
\global\long\def\F{\mathbf{F}}
\global\long\def\Comp{\mathrm{Comp}}

\global\long\def\PGL{\mathrm{PGL}}
\global\long\def\Y{\mathbb{Y}}
\global\long\def\Aut{\mathrm{Aut}}
\global\long\def\sign{\mathrm{sign}}
\global\long\def\PSL{\mathrm{PSL}}
\global\long\def\trace{\mathrm{trace}}

\global\long\def\M{\mathcal{M}}
\global\long\def\rel{\mathrm{rel}}
\global\long\def\F{\mathbb{F}}
\global\long\def\MCG{\mathrm{MCG}}
\global\long\def\Sp{\mathrm{Sp}}
\global\long\def\Hom{\mathrm{Hom}}
\global\long\def\Id{\mathrm{Id}}
\global\long\def\D{\mathcal{D}}
\global\long\def\Mod{\mathrm{Mod}}
\global\long\def\pure{\mathrm{p}}

\global\long\def\RR{\mathfrak{R}}
\global\long\def\PMod{\mathrm{PMod}}
\global\long\def\X{\mathbb{\mathcal{X}}}
\global\long\def\cover{\mathrm{cover}}
\global\long\def\mar{\mathsf{mark}}
\global\long\def\CC{\mathcal{C}}
\global\long\def\newmacroname{}
\global\long\def\Cayley{\mathrm{Cayley}}
\global\long\def\QQ{\mathcal{Q}}
\global\long\def\N{\mathbf{N}}
\global\long\def\garr{\overrightarrow{\gamma}}
\global\long\def\forget{\mathsf{forget}}
\global\long\def\GG{\mathcal{G}}
\global\long\def\A{\mathcal{A}}
\global\long\def\H{\mathcal{H}}
\global\long\def\sc{\mathrm{sc}}
\global\long\def\hol{\mathrm{hol}}
\global\long\def\Stab{\mathrm{Stab}}
\global\long\def\supp{\mathrm{supp}}
\global\long\def\Ave{\mathrm{Ave}}
\global\long\def\Leb{\mathrm{Leb}}

\title{Counting saddle connections in a homology class modulo $q$}

\thanks{*R. R\"{u}hr was supported in part by the S.N.F., project number 168823}

\author{Michael Magee and Rene R\"{u}hr*\\
 with an Appendix by Rodolfo Guti\'{e}rrez-Romo}
\begin{abstract}
We give effective estimates for the number of saddle connections on a translation surface that have length $\leq L$ and  are in a prescribed homology class modulo $q$. Our estimates apply to almost all translation surfaces in a stratum of the moduli space of translation surfaces, with respect to the Masur-Veech measure on the stratum.
\end{abstract}

\maketitle

\section{Introduction}

Let $S$ be a closed topological surface of genus $g\geq2$, and $\Sigma\subset S$
a finite collection of marked points. A\emph{ translation surface
structure} on $(S,\Sigma)$ is a set of complex charts on $S-\Sigma$
such that all transition functions are translations, and such that
the charts extend to conical singularities at $\Sigma$ with cone
angles given by positive integer multiples of $2\pi$. We write $\tilde{S}$
for a translation surface with underlying topological surface $(S,\Sigma)$.
As such, the complex charts give a Euclidean (flat) metric on $S-\Sigma$.
A \emph{saddle connection }on $\tilde{S}$ is a geodesic in $S-\Sigma$
with endpoints in $\Sigma$. 
 For $q\in\Z_{+}$ we write $\Z_{q}\stackrel{\mathrm{def}}{=}\Z/q\Z$.
For any $q\in\Z_{+}$, each saddle connection gives a homology class
in $H_{1}(S,\Sigma,\Z_{q})$. Let $\xi\in H_{1}(S,\Sigma,\Z_{q})$.
In this paper, we give precise estimates for 
\begin{quote}
$N(\tilde{S};L,\xi)=$ The number of saddle connections on $\tilde{S}$
of length $\leq L$ and in the homology class $\xi$.
\end{quote}
This problem is a direct generalization to genus $g\geq2$ of the
following question:
\begin{quote}
\emph{How many primitive lattice points in $\Z^{2}$ are congruent
to $(a,b)$ modulo $q$ and contained in a ball of radius $L$?}
\end{quote}
Our estimates for $N(\tilde{S};L,\xi)$ will be uniform over parameters
$L$ and $q$ and, owing to our use of ergodic methods, apply to almost
all translation surfaces in a sense that will be made precise below.

The issue of estimating $N(\tilde{S};L)$, the number of saddle connections
on $\tilde{S}$ of length $\leq L$, i.e., without any congruence
aspect, is arguably one of the central problems of Teichm\"{u}ller dynamics.
It was proved by Masur \cite{MASURGROWTH,MASURLOWER} that
\begin{thm}[Masur]
Given $\tilde{S}$, there exist $0<c_{1}(\tilde{S})\leq c_{2}(\tilde{S})<\infty$
such that for sufficiently large $L$,
\[
c_{1}\leq\frac{N(\tilde{S};L)}{L^{2}}\leq c_{2}.
\]
\end{thm}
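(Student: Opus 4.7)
The plan is to exploit the $\SL_2(\R)$-action on the stratum $\H$ of translation surfaces containing $\tilde{S}$. The key transfer principle is: if $\gamma$ is a saddle connection of $\tilde{S}$ with holonomy $\hol(\gamma) = v \in \R^2$, then $\gamma$ persists as a saddle connection on $g_t r_\theta \tilde{S}$ with holonomy $g_t r_\theta v$, where $g_t = \mathrm{diag}(e^t, e^{-t})$ is the Teichm\"uller flow and $r_\theta$ is rotation by $\theta$. Consequently, a saddle connection of length $R$ on $\tilde{S}$ becomes one of bounded length on $g_{\log R} r_\theta \tilde{S}$ for a suitable $\theta$, reducing the counting of long saddle connections on $\tilde{S}$ to the counting of short saddle connections along the Teichm\"uller orbit.

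For the upper bound, I would partition the saddle connections of length $\leq L$ into dyadic scales $R = 2^j$, and for each scale bound their number by a constant times the integral over $\theta \in [0,2\pi]$ of the count of saddle connections of bounded length on $g_{\log R} r_\theta \tilde{S}$. An elementary disc-packing estimate (using that a translation surface of fixed topology has area determined by the stratum) bounds this integrand in terms of the reciprocal of the systole; combined with Masur's non-divergence results (for a.e.\ $\theta$, the orbit $\{g_t r_\theta \tilde{S}\}$ does not diverge in $\H$, hence has systole bounded below on a set of positive density), one obtains $O(L^2)$ after summing over $j$ and integrating over $\theta$.

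For the lower bound I would reverse the argument. Every surface in the thick part of $\H$ carries a saddle connection whose length depends continuously on the surface and is strictly positive there. Whenever $g_t r_\theta \tilde{S}$ enters the thick part, the corresponding short saddle connection unfolds to one of length $\asymp e^t$ on $\tilde{S}$. Since the orbit returns to the thick part for a set of $(t, \theta)$ of measure $\asymp L^2$ in $[0, \log L] \times [0, 2\pi]$ (again by non-divergence, applied for a.e.\ $\theta$), varying $(t, \theta)$ yields $\gtrsim L^2$ candidate saddle connections of length $\leq L$.

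The principal obstacle is over-counting in the lower bound: distinct parameters $(t, \theta)$ may encode the same saddle connection of $\tilde{S}$. The remedy is to attach to each short saddle connection of $g_t r_\theta \tilde{S}$ a neighborhood in parameter space whose size is dictated by the gap between the shortest and next-shortest holonomy lengths at that point of the orbit, and to verify essential disjointness of the resulting cells. No analogous issue arises for the upper bound, where multiplicity only strengthens the inequality; this asymmetry is precisely why the lower bound is the harder half and was proved in a separate paper \cite{MASURLOWER}.
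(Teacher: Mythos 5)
The paper does not prove this theorem; it states it and cites Masur's papers, so there is no proof in the paper to compare against. I will therefore assess the proposal on its own.

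The philosophy of the sketch is the right one, and the lower bound half is broadly sound: recurrence of $\{g_t r_\theta \tilde{S}\}$ to a fixed compact part of $\H$ for a.e.~$\theta$, with positive density in $t$, does produce $\gtrsim L^2$ candidate short saddle connections after parametrizing by $(t,\theta)$, and the over-counting issue is correctly identified as the crux. Your proposed remedy (disjoint cells in parameter space sized by the gap between the shortest and second-shortest holonomies) is qualitatively reasonable but would still need quantitative verification.

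The upper bound, however, has a genuine gap. Non-divergence gives you a systole lower bound only on a set of times of \emph{positive density}; it says nothing about the magnitude of $\mathrm{sys}(g_{-\log R} r_{-\theta}\tilde{S})$ at the remaining times, where it can be arbitrarily small. Your packing estimate blows up like $\mathrm{sys}^{-2}$, so the sum over dyadic scales $j \le \log_2 L$ of $\mathrm{sys}(g_{-j\log 2} r_{-\theta}\tilde{S})^{-2}$, integrated over $\theta$, is not \emph{a priori} $O(L^2)$; indeed, bounding $\int_0^{2\pi}\mathrm{sys}(g_{t}r_\theta\tilde{S})^{-2}\,d\theta$ uniformly in $t$ is essentially the $L^1$-integrability of the Siegel--Veech transform, a statement strictly stronger than recurrence and one that was established later by Veech and Eskin--Masur, not by the non-divergence theorem you invoke. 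Masur's actual upper bound argument in \cite{MASURGROWTH} does not proceed by ``pack and integrate against recurrence''; it requires a separate mechanism controlling how many saddle connections can be simultaneously short. As written, the sentence ``one obtains $O(L^2)$ after summing over $j$ and integrating over $\theta$'' asserts precisely the estimate that constitutes the heart of the theorem, without supplying a reason it holds.
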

We now fix a function $\kappa:\Sigma\to\Z_{+}$ with the property
that $\sum_{z\in\Sigma}(\kappa(z)-1)=2g-2$. The area of a translation surface is the area inherited from the complex charts.
 The \emph{Teichm\"{u}ller
space }$\X_{\kappa}$ is the collection of unit area translation surface structures
on $S$ with conical singularities at $\Sigma$, with cone angle at
$z\in\Sigma$ given by $2\pi\kappa(z)$, modulo isotopies of $S$
relative to $\Sigma$. The \emph{pure mapping class group} of $(S,\Sigma)$
is the collection of homeomorphisms of $S$ fixing $\Sigma$, up to
isotopy relative to $\Sigma$. It is denoted by $\PMod(S ,\Sigma)$. The group $\PMod(S,\Sigma)$ acts on
$\X_{\kappa}$, and we define
\[
\H(\kappa)\stackrel{\mathrm{def}}{=}\PMod(S,\Sigma)\backslash\X_{\kappa}.
\]
The space $\H(\kappa)$ is called a \emph{stratum} of the moduli space
of unit area translation surfaces; it need not be connected, but the connected
components have been classified by Kontsevich and Zorich \cite{KZ}.
Throughout the rest of the paper, we will view $S$, $\Sigma$, $\kappa$
as fixed, and $\M$ will denote a connected component of $\H(\kappa)$.
We will write $[\tilde{S}]\in\H(\kappa)$ for the modulus of a translation
surface $\tilde{S}$.

There is an $\SL_{2}(\R)$ action on $\X_{\kappa}$ by postcomposition
on the charts of translation structures. This action descends to an
action on $\H(\kappa)$ and $\M$. There is an $\SL_{2}(\R)$-ergodic
and invariant probability measure $\mu$ on $\M$ called the Masur-Veech
measure \cite{MASUR,VEECHGAUSS}. The Masur-Veech measure is in the
Lebesgue class with respect to a natural affine orbifold structure
on $\M$.

Eskin and Masur proved the following theorem in \cite{ESKINMASUR}:
\begin{thm}[Eskin-Masur]
\label{thm:Eskin-Masur}There exists a constant $c=c(\M)$ such that
for $\mu$-almost all $[\tilde{S}]\in\M$,
\[
N(\tilde{S};L)=\pi cL^{2}+o(L^{2})
\]
as $L\to\infty$.
\end{thm}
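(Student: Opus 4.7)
The plan is to combine the Siegel-Veech integration formula with an equidistribution argument for circle averages under the Teichm\"{u}ller geodesic flow on $(\M, \mu)$. For $f \in C_{c}(\R^{2})$ introduce the counting functional $\hat{f}(\tilde{S}) := \sum_{v \in V(\tilde{S})} f(v)$, where $V(\tilde{S}) \subset \R^{2}$ is the set of holonomy vectors of saddle connections of $\tilde{S}$. The first main step is to establish the Siegel-Veech formula on $\M$: there exists a constant $c = c(\M) > 0$ with
$$\int_{\M} \hat{f}\, d\mu \;=\; c \int_{\R^{2}} f\, d\Leb \qquad \text{for all } f \in C_{c}(\R^{2}).$$
Applied to (an approximation of) $f = \chi_{B(L)}$ this gives $\int_{\M} N(\tilde{S};L)\, d\mu = \pi c L^{2}$, and the remaining task is to upgrade this integrated identity to a $\mu$-almost-sure pointwise asymptotic.

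Let $g_{t} = \mathrm{diag}(e^{t}, e^{-t}) \in \SL_{2}(\R)$ and let $r_{\theta}$ denote rotation by $\theta$. The equivariance $V(g\tilde{S}) = gV(\tilde{S})$ yields $\hat{\chi}_{B}(g_{-t} r_{\theta}\tilde{S}) = \hat{\chi}_{g_{t} B}(r_{\theta}\tilde{S})$, so pulling a fixed bounded box $B \subset \R^{2}$ back by $g_{-t}$ turns it into the long thin rectangle $g_{t} B$ of dimensions $\sim e^{t} \times e^{-t}$. A direct computation then yields
$$\int_{0}^{2\pi} \hat{\chi}_{B}(g_{-t} r_{\theta}\tilde{S})\, d\theta \;=\; \sum_{v \in V(\tilde{S})} \Leb\bigl(\{\theta \in [0,2\pi): r_{\theta} v \in g_{t} B\}\bigr),$$
which for $B$ a unit square is asymptotic to $4 e^{-t} M(\tilde{S};e^{t})$, where $M(\tilde{S};L) := \sum_{v \in V(\tilde{S}),\, |v| \leq L} |v|^{-1}$. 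Thus an asymptotic for the circle average of $\hat{\chi}_{B}$ under $g_{-t}$ produces an asymptotic for $M(\tilde{S}; L)/L$, from which $N(\tilde{S}; L) \sim \pi c L^{2}$ follows by a summation-by-parts argument recovering $N$ from $M$.

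The required equidistribution statement is: for $\mu$-a.e.\ $\tilde{S}$,
$$\frac{1}{2\pi}\int_{0}^{2\pi} F(g_{-t} r_{\theta} \tilde{S})\, d\theta \longrightarrow \int_{\M} F\, d\mu \qquad \text{as } t \to \infty,$$
applied with $F = \hat{\chi}_{B}$. For bounded continuous $F$ this follows from mixing of the $g_{t}$-action on $(\M, \mu)$, itself a consequence of $\SL_{2}(\R)$-ergodicity and the Howe-Moore theorem, together with a wavefront/thickening argument converting mixing into circle equidistribution. The main obstacle, and the crux of the work, is that $F = \hat{\chi}_{B}$ is \emph{unbounded}: it blows up on surfaces carrying a short saddle connection, i.e.\ near the non-compact part of $\M$. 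Overcoming this requires a quantitative non-divergence estimate for the orbit $\{g_{-t} r_{\theta} \tilde{S}\}_{\theta}$, bounding uniformly in $t$ the $\theta$-measure of angles for which the orbit lies deep in a cusp of $\M$. Such an estimate, combined with an $L^{p}$ bound $\hat{\chi}_{B} \in L^{p}(\M, \mu)$ for some $p > 1$, allows one to truncate $F$ to a large compact set, apply the bounded case of equidistribution to the truncation, and control the tail via the non-divergence estimate. This yields the desired $\mu$-a.e.\ pointwise limit and identifies the constant as $\pi c$.
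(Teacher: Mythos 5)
The paper does not prove this theorem; it is quoted as a black box from Eskin--Masur \cite{ESKINMASUR}. So the comparison is really against the original argument and against the counting argument the paper carries out in Section \ref{proof} for the stronger Theorem \ref{thm:tech}, which descends from the same circle of ideas.

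Your sketch is a faithful high-level account of the Eskin--Masur strategy and it is essentially correct: the Siegel--Veech formula reduces matters to an a.e.\ pointwise ergodic statement for circle averages of $\hat{\chi}_{B}$ under $g_{-t}$, and the real difficulty is that $\hat{\chi}_{B}$ is unbounded near the cusps, which Eskin--Masur handle exactly as you say, via the non-divergence (``no small triangles'') estimate combined with an $L^{1+\delta}$ bound on the Siegel--Veech transform. One small difference worth flagging: the paper (following \cite{NRW}) uses a pair of nested \emph{triangles} $W_{1}\subset W_{2}$ with a common apex at the origin as the test regions, rather than your rectangle. The point of the triangles is that their circle averages bracket $\theta_{t}N(\tilde{S};e^{t})/\pi$ directly (see \eqref{eq:sandwich1}), so one reads off $N$ without the detour through the weighted count $M(\tilde{S};L)=\sum_{|v|\le L}|v|^{-1}$ and the subsequent Tauberian step; with your rectangle that step is needed and costs an extra argument (your factor bookkeeping there, $M(L)\sim 2\pi c L$, requires a monotonicity Tauberian lemma to recover $N(L)\sim \pi c L^{2}$). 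The other, more substantial, divergence from the paper's machinery is that (a) the paper replaces qualitative circle equidistribution via Howe--Moore and a wavefront argument with a \emph{quantitative} pointwise ergodic theorem (Theorem \ref{thm:pointwise-ergodic}) powered by the uniform spectral gap, giving the power-savings error needed for Theorem \ref{thm:tech}, and (b) the paper avoids the non-divergence/truncation argument entirely by invoking the Athreya--Cheung--Masur result that Siegel--Veech transforms lie in $L^{2}$ (Theorem \ref{thm:SVin L2}), which makes the Sobolev bounds of Lemma \ref{lem:sob-estimate-SV} immediately available. Your proposal trades these for less quantitative tools, which is fine for the $o(L^{2})$ statement of Theorem \ref{thm:Eskin-Masur} but not for the effective versions the paper ultimately needs.
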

An $L^{1}$-averaged result of Theorem \ref{thm:Eskin-Masur} had
previously been obtained by Veech \cite{VEECHSIEGEL}. The broad philosophy
behind the works of Veech \cite{VEECHSIEGEL} and Eskin and Masur
\cite{ESKINMASUR} is that one may use dynamics on moduli spaces to
obtain estimates for quantities defined on points in the moduli space.
This philosophy stems from Margulis's work on the Oppenheim conjecture
\cite{MARGULIS}. Indeed, Veech's paper \cite{VEECHSIEGEL} was partly
inspired by a lecture of Margulis, and Theorem \ref{thm:Eskin-Masur}
uses ideas from the paper of Eskin, Margulis and Mozes \cite{EMM}
on the quantitative form of the Oppenheim conjecture.

Recently, Nevo, R\"{u}hr and Weiss \cite{NRW} obtained an improvement
of Theorem \ref{thm:Eskin-Masur} with an effective error term that
saves a power of $L$:
\begin{thm}[Nevo-R\"{u}hr-Weiss]
\label{thm:NRW}There exist constants $c=c(\M)$ and $\kappa=\kappa(\M)$
such that for $\mu$-almost all $[\tilde{S}]\in\M$,
\[
N(\tilde{S};L)=\pi cL^{2}+O(L^{2-\kappa})
\]
as $L\to\infty$. The implied constant depends on $[\tilde{S}]$.
\end{thm}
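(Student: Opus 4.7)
\textbf{Proof proposal for Theorem~\ref{thm:NRW}.}
The plan is to upgrade the Eskin--Masur argument behind Theorem~\ref{thm:Eskin-Masur} to an effective statement by replacing the qualitative mean ergodic theorem with a quantitative one, whose exponential decay rate is furnished by a spectral gap for the $\SL_2(\R)$-representation on $L^2_0(\M,\mu)$, and then to pass from the resulting $L^2$ estimate to a pointwise almost-sure statement with a power-saving error via Chebyshev and Borel--Cantelli.

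For $f\in C_c^\infty(\R^2)$, I would introduce the Siegel--Veech transform $\hat f([\tilde S])=\sum_v f(v)$, the sum being over holonomy vectors of saddle connections of $\tilde S$; the Siegel--Veech formula of Veech and Eskin--Masur provides $\int_\M \hat f\,d\mu = c(\M)\int_{\R^2} f$. A direct change of variables identifies the circle-arc average
\[
(A_t F)([\tilde S]) \;=\; \frac{1}{2\pi}\int_0^{2\pi} F(g_t r_\theta [\tilde S])\,d\theta
\]
of $\hat f$ with another Siegel--Veech-type sum, and by choosing $f_{\pm}$ to be smooth upper/lower approximations to the indicator of an annulus of width $\delta$ about the unit circle in $\R^2$, one can sandwich $L^{-2}N(\tilde S;L)$ (with $L=e^t$) between $(A_t \hat f_{\pm})([\tilde S])$ up to a transparent error of order $\delta$.

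Next I would invoke the strong spectral gap for $\SL_2(\R)$ acting on each connected component $\M$ of a stratum, established by Avila--Gou\"ezel--Yoccoz, which provides constants $\alpha=\alpha(\M)>0$ and $s=s(\M)\in\N$ such that, for every $F$ in the Sobolev space $W^{s,2}(\M)$,
\[
\left\|A_t F - \int_\M F\,d\mu\right\|_{L^2(\M,\mu)} \;\ll\; e^{-\alpha t}\,\|F\|_{W^{s,2}(\M)}.
\]
Applied to the regularised observables $\hat f_{\pm}$ and after optimising in $\delta$ as a function of $t$, this yields an $L^2$ estimate of the form $\|L^{-2}N(\,\cdot\,;L)-\pi c(\M)\|_{L^2(\M)}\ll L^{-\alpha'}$ for some $\alpha'=\alpha'(\M)>0$.

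Chebyshev then gives $\mu\{[\tilde S]:|L_n^{-2}N(\tilde S;L_n)-\pi c(\M)|>L_n^{-\beta}\}\ll L_n^{-2(\alpha'-\beta)}$ for any $0<\beta<\alpha'$; setting $L_n=2^n$ makes the right-hand side summable in $n$, so Borel--Cantelli produces the bound along this geometric subsequence, and the monotonicity of $L\mapsto N(\tilde S;L)$ interpolates to all $L$ with a slightly worse but still power-saving exponent $\kappa>0$. I expect the main technical obstacle to be the unboundedness of $\hat f$ on $\M$: the Siegel--Veech transform blows up in the ``cusp'' of $\M$ where translation surfaces carry short saddle connections, so $\hat f\notin L^\infty(\M)$, and Eskin--Masur-type integrability estimates combined with Athreya-style recurrence bounds must be invoked to truncate the cusp without damaging either the Sobolev norm of $\hat f_{\pm}$ or the sandwich inequality for $N(\tilde S;L)$. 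Balancing this cusp cut-off scale against $\delta$ and against the spectral gap exponent so that the final power-saving constant remains uniform over $L$ is, I believe, the most delicate part of the argument.
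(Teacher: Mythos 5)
The paper does not itself prove Theorem~\ref{thm:NRW} (it is cited from [NRW]), but the proof of Theorem~\ref{thm:tech} in Section~\ref{proof} follows the same template, so I compare against that. Your overall strategy — Siegel--Veech transform, a sandwich by circle averages of smoothed Siegel--Veech transforms, an effective mean ergodic theorem driven by the Avila--Gou\"ezel--Yoccoz spectral gap, Chebyshev plus Borel--Cantelli along a geometric subsequence, and interpolation via monotonicity of $L\mapsto N(\tilde S;L)$ — is the right one and matches [NRW].

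The concrete gap is your choice of test region. One cannot sandwich $L^{-2}N(\tilde S;L)$ up to $O(\delta)$ by $\Ave_{K,t}\hat f_\pm$ with $f_\pm$ supported near an annulus $A_\delta$ about the unit circle. For a fixed holonomy vector $v$ with $e^{-t}\ll|v|\ll e^t$, the inner average $\frac{1}{2\pi}\int_0^{2\pi}\mathbf 1_{A_\delta}(a_t r_\theta v)\,d\theta$ is of order $\delta e^{-t}/|v|$ (the ellipse $a_t r_\theta v$ traces meets the annulus on a $\theta$-arc whose length depends on $|v|$). Thus $\Ave_{K,t}\widehat{\mathbf 1}_{A_\delta}$ is comparable to $\delta e^{-t}\sum_{|v|<e^t}|v|^{-1}$, a length-weighted sum, not a constant multiple of $L^{-2}N(\tilde S;L)$. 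What [NRW], and the paper, use instead is a thin triangular wedge $W$ with apex at the origin and half-angle $\theta$: under $a_{-t}$ this becomes a wedge of half-angle $\theta_t$ with $\tan\theta_t=e^{-2t}\tan\theta$, and, crucially, such a wedge subtends angle $\approx\theta_t$ at the origin at every radial distance, so each holonomy vector inside the ball of radius $e^t$ is captured for a $\theta$-set of measure $\approx 2\theta_t$ independently of its length. This yields $\Ave_{K,t}\widehat{\mathbf 1_W}\approx(\theta_t/\pi)N(\tilde S;e^t)$, which is the sandwich~(\ref{eq:sandwich1}). The annulus lacks this constant-angular-width property, so your proposed first step fails.

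Two secondary remarks. Your identification of the cusp (unboundedness of $\hat f$ and the non-$L^\infty$ nature of the Siegel--Veech transform) as the main technical obstacle is accurate for the original [NRW] argument; the present paper sidesteps the cusp-cutoff entirely by invoking Athreya--Cheung--Masur (Theorem~\ref{thm:SVin L2}), which gives $\hat f\in L^2(\mu)$ and replaces [NRW, \S4]. Also, the quantitative ergodic estimate used is a first-order $K$-Sobolev bound involving only the $\partial_\theta$-derivative (Theorem~\ref{thm:L2 norm of averaging op}), not a general $W^{s,2}$ estimate; this matters because the smoothed cutoffs $\psi_{(\pm,\delta)}$ need only be $C^1$ with controlled $\partial_\theta$-derivative, and higher-order Sobolev norms of these cutoffs blow up as $\delta\to0$ in a way that would spoil the balance against the spectral-gap rate.
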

The new input to Theorem \ref{thm:NRW} that allows Nevo, R\"{u}hr and
Weiss to obtain a power savings error term is an effective ergodic
theorem in the form of a spectral gap for the $\SL_{2}(\R)$ action
on $\M$. The spectral gap for this action was proved by Avila, Gou\"{e}zel
and Yoccoz in \cite{AGY}.

A simplified form of the main theorem of the current paper is
\begin{thm}
\label{thm:main-theorem}There are constants $Q_{0}\in\Z_+$, $c,\eta,\alpha>0$,
 depending on $\M$ such that for $\mu$-almost
all $x\in\M$, for all $\tilde{S}$ such that $[\tilde{S}]=x$, for
all $q$ coprime to $Q_0$, for all $\xi\in H_{1}(S,\Sigma,\Z_{q})$ that
can arise from a saddle connection on $\tilde{S}$, 
\[
N(\tilde{S};L,\xi)=\frac{\pi cL^{2}}{|\PMod(S,\Sigma).\xi|}+O(q^{\alpha}L^{2-\eta}).
\]
The implied constant depends on $[\tilde{S}]$.
\end{thm}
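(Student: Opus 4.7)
The plan is to lift the counting problem to a finite covering of the moduli space that ``records'' the mod $q$ homology class of saddle connections, and then to apply the quantitative equidistribution strategy of Nevo--R\"uhr--Weiss to this cover. Concretely, $\PMod(S,\Sigma)$ acts on $H_{1}(S,\Sigma,\Z_{q})$ through a finite quotient $G_{q}$; let $\PMod_{q}$ be the kernel of this action, and set $\M_{q}\stackrel{\mathrm{def}}{=}\PMod_{q}\backslash\X_{\kappa}$, a Galois cover of $\M$ with deck group $G_{q}$. A point of $\M_{q}$ is a translation surface together with a marking of $H_{1}(S,\Sigma,\Z_{q})$, so that ``being in homology class $\xi$'' is a well-defined condition on saddle connections. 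Any $\tilde{S}$ with $[\tilde{S}]=x$ has a lift $\tilde{x}\in\M_{q}$ for which $N(\tilde{S};L,\xi)$ equals the count $\tilde{N}(\tilde{x};L,\xi)$ of saddle connections on $\tilde{x}$ whose marked class is $\xi$.

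The next step is a Siegel--Veech identity on $\M_{q}$: writing $\mu_{q}$ for the lifted Masur--Veech probability measure, one establishes
\[
\int_{\M_{q}}\tilde{N}(\tilde{y};L,\xi)\,d\mu_{q}(\tilde{y})\;=\;\pi c_{q}(\xi)\,L^{2}
\]
with a Siegel--Veech constant $c_{q}(\xi)$ that, by $G_{q}$-equivariance, depends only on the $\PMod$-orbit of $\xi$, and that by summing over the orbit and comparing to the total count $\int_{\M_{q}}N(\tilde{y};L)\,d\mu_{q}=\pi c L^{2}$ equals $c/|\PMod(S,\Sigma).\xi|$. This accounts for the main term.

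To convert the averaged identity into a pointwise power-saving estimate, I would follow Nevo--R\"uhr--Weiss: approximate $\tilde{N}(\,\cdot\,;L,\xi)$ by a smoothed Siegel--Veech transform, unfold the $\SL_{2}(\R)$-averaging, and apply a quantitative mean ergodic theorem on $L^{2}(\M_{q})$. The crucial ingredient is an $\SL_{2}(\R)$-spectral gap on $L^{2}_{0}(\M_{q})$ that is effective and uniform in $q$, with explicit polynomial dependence. Decomposing
\[
L^{2}_{0}(\M_{q})\;=\;L^{2}_{0}(\M)\;\oplus\;\bigoplus_{\rho\neq 1}V_{\rho},
\]
where $\rho$ ranges over nontrivial irreducible $G_{q}$-representations appearing in the Koopman representation, the $L^{2}_{0}(\M)$-summand inherits the AGY spectral gap, and on the new summands a gap must be produced from scratch. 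This is where the appendix of Guti\'errez-Romo enters: it identifies the image of $\PMod(S,\Sigma)$ in $\Aut(H_{1}(S,\Sigma,\Z_{q}))$ (a strong approximation statement, valid when $\gcd(q,Q_{0})=1$), which restricts the representations $\rho$ that can occur and controls their dimensions and multiplicities polynomially in $q$.

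The main obstacle is precisely this uniform effective spectral gap on $\M_{q}$. Propagating the AGY gap to the cover with only polynomial-in-$q$ loss demands both the strong approximation input from the appendix and a mechanism --- in the spirit of work on expanders over congruence towers --- for converting the algebraic restriction on representations into an actual $\SL_{2}(\R)$-spectral statement on $L^{2}(\M_{q})$. Once the gap is in hand, the proof is completed by careful bookkeeping of how each constant in the Nevo--R\"uhr--Weiss argument (the volume of $\M_{q}$, the $L^{2}$-norm of the smoothed Siegel--Veech transform, and the gap itself) depends on $q$; these dependencies combine to yield the exponent $\alpha$ in the error term $O(q^{\alpha}L^{2-\eta})$.
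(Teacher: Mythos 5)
You follow essentially the same route as the paper: lift to a congruence cover of $\M$ that records mod-$q$ relative homology, establish a level-$q$ Siegel--Veech formula by unfolding, and run the Nevo--R\"uhr--Weiss pointwise argument using a uniform spectral gap on the covers, with Guti\'errez-Romo's strong approximation result as the key algebraic input. Two points deserve comment. First, your derivation of $c_q(\xi) = c/|\PMod(S,\Sigma).\xi|$ by summing over the orbit implicitly requires that the mapping class group acts transitively on the set of $\xi$ that actually arise from saddle connections of the relevant type; the paper supplies this via Witt's theorem for unimodular vectors (Theorem \ref{thm:transitive-on-unimodular}) in the closed case and via Lemma \ref{lem:transitive-relative} in the distinct-endpoint case, and you should make this explicit. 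Second, and more substantively, you propose a single cover $\M_q$ whose deck group is the full image $G_q$ of $\PMod(S,\Sigma)$ on $H_1(S,\Sigma,\Z_q)$. But the strong approximation statement actually proved in the appendix is for the subgroups $G_\sigma(q)\cong H_1(\Z_q)\rtimes\Sp_{2g}(\Z_q)$ attached to a class $\sigma=z_1-z_2\in\tilde{H}_0(\Sigma,\Z)$, and the paper accordingly works with the smaller covers $\M(\Theta_q^\sigma)$. Surjectivity of the Rauzy--Veech monoid onto the full $G_q$ is strictly stronger and is not established; indeed the paper remarks after Proposition \ref{prop:Strong-Approximation} that even the integral analogue for $\sigma\neq 0$ is not known. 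You would therefore need either to prove this stronger statement or, as the paper does, to decompose the counting problem over configurations and use only the $\sigma$-cover determined by $\delta(\xi)$. Beyond that, your sketch of the spectral-gap step correctly identifies the difficulty but leaves implicit the two inputs the paper isolates as black boxes: property (T) for $H_1(\Z)\rtimes\Sp_{2g}(\Z)$ (Burger) giving uniform expansion, and a quasirandomness lower bound on dimensions of new representations of $G_\sigma(q)$.
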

\begin{rem}
The homology class $\xi\in H_{1}(S,\Sigma,\Z_{q})$ either only arises
from saddle connections with distinct endpoints, in which case
\[
|\PMod(S,\Sigma).\xi|=q^{2g}
\]
(cf. Lemmas \ref{lem:transitive-relative} and \ref{lem:num_hom_classes})
or else $\xi$ only arises from saddle connections whose endpoints
coincide, and in this case 
\[
|\PMod(S,\Sigma).\xi|=q^{2g}\prod_{\begin{subarray}{c}
p\text{ prime}\\
p|q
\end{subarray}}\left(1-\frac{1}{p^{2g}}\right)
\]
(cf. Theorem \ref{thm:transitive-on-unimodular} and Lemma \ref{lem;num-unimod}).
\end{rem}
\begin{rem}
The main term of Theorem \ref{thm:main-theorem} is larger than the
error term whenever 
\[
L\gg_{[\tilde{S}]}q^{\frac{2g+\alpha}{\eta}}.
\]
The exponent of $q$ here hence depends on $g$, $\alpha$,
and $\eta$. Our proof shows that $\alpha$ can be taken to be $D+2$
where $D$ is the dimension of the group $\Z_{q}^{2g}\rtimes\Sp_{2g}(\Z_{q})$
in the case that $\xi$ comes from saddle connections with distinct
endpoints, or $\Sp_{2g}(\Z_{q})$ otherwise. So the exponent $\alpha$
can be made explicit, but we expect this not to be optimal. The constant
$\eta$ depends on the size of the uniform spectral gap in Theorem
\ref{thm:Uniform-spectral-gap} below. We have not tried to optimize
this dependency, since the size of the uniform spectral gap, although
theoretically could be made explicit, would be very far from optimal
if one tried to do so. 
\end{rem}

\subsection{An overview of the proof of Theorem \ref{thm:main-theorem}}
 
 We state a more refined version of Theorem \ref{thm:main-theorem} in Theorem \ref{thm:tech} below. This involves counting tuples of saddle connections, with an extra restraint
 that the saddle connections form a given \emph{configuration} $\CC$. Configurations are defined in Section  \ref{holo}.
 This leads to a more refined counting function $N^{\CC}(x;L,\xi)$ where $x\in \M$, $L\geq0$ and $\xi\in H_1(S,\Sigma,\Z_q)$.
 
 The starting observation is that $N^{\CC}(x;L,\xi)$, for fixed $L$, lifts  naturally to a covering space $\M(\Theta_q^\sigma)$ of $\M$. Here $\sigma$ is a parameter 
 encoding the location of the endpoints of the saddle connections of configuration $\CC$.
 These covering spaces are defined in Section  \ref{subsec:Connected-components-of}.
 
 Now working on the covering space $\M(\Theta_q^\sigma)$, we wish to apply the arguments of Nevo, R\"{u}hr, and Weiss from \cite{NRW}. 
 This argument requires in particular two key estimates. The first is a pointwise ergodic theorem for the action of $\SL_2(\R)$ on a Sobolev space of functions on $\M(\Theta_q^\sigma)$. The second is the Siegel-Veech formula. Given these two ingredients, our arguments are the same is in \cite{NRW}, with a minor streamlining due to recent work of Athreya, Cheung, and Masur \cite{ACM}. These arguments are given in Section \ref{proof}.
 
 However, the estimates we obtain for $N^{\CC}(x;L,\xi)$ using this argument will not have an explicit dependence on $q$ unless we can both
 \begin{itemize}
 \item Make the pointwise ergodic theorem uniform in $q$, and
 \item Make the dependence of the Siegel-Veech formula on $q$ explicit.
 \end{itemize}
 
 The description of the Siegel-Veech formula at level $q$ goes along the same lines as for the usual Siegel-Veech formula. The issue that arises is the description of the \emph{Siegel-Veech constants} at level $q$. The key fact that allows us to do this, is that for any configuration, there is only one orbit of the mapping class group on $H_1(S,\Sigma,\Z_q)$ that can arise from that configuration. For closed saddle connections, and when $q$ is prime, this fact is a result of Witt's Theorem on symplectic transformations over finite fields. For general $q$ and $\CC$, we state the results we need on orbits in $H_1(S,\Sigma,\Z_q)$ in Section \ref{subsec:properties-of-the-groups}. Given these ingredients, the treatment of the Siegel-Veech formula at level $q$ is given in Section \ref{sec-SV}. 
 
 We now turn to the pointwise ergodic theorem, Theorem \ref{thm:pointwise-ergodic}. The proof of the pointwise ergodic theorem for $\M$ given by Nevo, R\"{u}hr, and Weiss in \cite{NRW} uses the spectral gap for the $\SL_2(\R)$ action on $\M$ proved by Avila, Gou\"{e}zel, and Yoccoz \cite{AGY}. The same arguments prove the $q$-uniform pointwise ergodic theorem that we need, provided that one can prove the $\SL_2(\R)$ actions on $\M(\Theta_q^\sigma)$ have a \emph{uniform spectral gap}. 
 
 Before discussing the spectral gap, we explain what it means. There are several different ways to state the result (see \cite[Introduction]{MAGEEKZ} for a discussion), but the one we will use here is in terms of representation theory. There is a family of irreducible unitary representations of $\SL_2(\R)$ called \emph{complementary series representations}. They are parametrized by a parameter $u\in(0,1)$ and written $\Comp^u$. The action of $\SL_2(\R)$ on $\M(\Theta_q^\sigma)$ has a spectral gap $\eta>0$ if
 the abstract decomposition of the unitary representation $L^2(\M(\Theta_q^\sigma))$ as a direct integral is supported away from $\Comp^u$ with $u\in (1-\eta,1)$.
 For a family of $q$, there is a uniform spectral gap if $\eta$ can be taken to be the same for all $q$.

 A uniform spectral gap result for the $\M(\Theta_q^\sigma)$ that arise, in the current context, from configurations of closed saddle connections, was obtained by Magee in \cite{MAGEEKZ}. Establishing this uniform spectral gap for the covering spaces arising from non-closed saddle connections is a main point of this paper. We extend the results of \cite{MAGEEKZ} to congruence covers of $\M$ that arise from relative homology. This extended result is given in Theorem \ref{thm:Uniform-spectral-gap}.
 
 We do this by establishing two facts that can be inserted into the framework of \cite{MAGEEKZ} as `black boxes' to obtain the extension. The framework of \cite{MAGEEKZ} uses Veech's zippered rectangles construction. This associates to each $\M$ a collection of Rauzy-Veech monoids. These are defined in Section \ref{subsec:Rauzy-Veech-monoids-and}. They can be identified with submonoids of the pure mapping class group of $(S,\Sigma)$. Each of these act on a piece of $H_1(S,\Sigma,\Z_q)$ and generate a finite group $J_{\sigma}(q)$. There is also a group $G_\sigma(q)$ that is the whole automorphism group of the relevant piece of  $H_1(S,\Sigma,\Z_q)$. In other words, $G_\sigma(q)$ is the largest group that $J_{\sigma}(q)$ could be. 
 
 The first black box we need is that there are fixed generators of the Rauzy-Veech monoid, such that the associated Cayley graphs of $G_\sigma(q)$ are uniform expander graphs. This is stated as {\bf Property I: Uniform Expansion} below. 
 It is not possible for this to hold unless $J_\sigma(q) = G_\sigma(q)$ for each $q$ in the family. This `Strong Approximation' statement is absolutely essential to the current paper. It is given precisely in Proposition \ref{prop:Strong-Approximation} and proved by Guti\'{e}rrez-Romo in the Appendix. Once it is established, the uniform expansion of the associated Cayley graphs relies ultimately on Kazhdan's property (T) for $\Z^{2g} \rtimes \Sp_{2g}(\Z)$ when $g\geq 2$, which is a result of Burger \cite{BURGER}. The necessary arguments are given in Section \ref{sec:bb1}.
 
 The remaining black box we need is a lower bound for the dimension of `new' representations of $G_{\sigma}(q)$ that is polynomial in $q$. This is stated in {\bf Property II: Quasirandomness} below. The proof involves exploiting the semidirect product structure of $G_\sigma(q)$ together with known results about the corresponding groups in absolute homology. These arguments are given in Section \ref{sec:bb2}.

\subsection{Acknowledgments}
We would like to thank Barak Weiss for the suggestion that the spectral gap result of Magee might be relevant to counting problems on translation surfaces. This led to a meeting in Tel Aviv University that instigated the work on this paper.

\subsection{Notation}
We collect all the special notation that we use here.
We write $\Z_{q}=\Z/q\Z$. We use the notation $\tilde{H}$ for reduced homology. If $R$ is an
abelian group then $H_{1}(R)=H_{1}(S,R)$ and $H_{1}^{\rel}(R)=H_{1}(S,\Sigma,R)$.

\section{Covering spaces and uniform spectral gap\label{sec:Covering-spaces-and}}

\subsection{Connected components of strata of translation surfaces and covering
spaces \label{subsec:Connected-components-of}}

Let $G$ be a finite group and let 
\[
\Theta:\PMod(S,\Sigma)\to G
\]
be a surjective group homomorphism. Then we can consider $Y(\Theta)\stackrel{\mathrm{def}}{=}\ker(\Theta)\backslash\X_{\kappa}$.
There is a natural map 
\begin{equation}
Y(\Theta)\xrightarrow{\cover_{\Theta}}\H(\kappa)\label{eq:covering map of strata}
\end{equation}
with deck transformation group $G$. The map $\cover_{\Theta}$ is
a covering map of affine orbifolds. We define
\[
\M(\Theta)\stackrel{\mathrm{def}}{=}\cover_{\Theta}^{-1}(\M).
\]

The $\SL_{2}(\R)$ action on $\X_{\kappa}$ also gives an action on
$\M(\Theta)$ that preserves $\mu_{\M(\Theta)}$, the pull-back of
$\mu_{\M}$ with respect to the map $\cover_{\Theta}$ and the counting
measure on the fibres of $\cover_{\Theta}$.

The goal of this section is to prove that for certain specific $\Theta$
related to our counting problem, the $\SL_{2}(\R)$ actions on $\M(\Theta)$
have a uniform spectral gap. We now discuss the maps $\Theta$ that
we are interested in. Let $\Gamma=\PMod(S,\Sigma)$. We are interested
in the rings $\Z_{q}=\Z/q\Z$ and $\Z$. So that we can discuss these
at the same time, we use the notation $\Z_{\infty}=\Z$ and convention
that $q|\infty$ for all finite $q$. In the following, $q$ is either
a finite natural number $\geq2$ or $\infty$.

For each such $q$ we have a short exact sequence
\begin{equation}
0\to H_{1}(\Z_{q})\to H_{1}^{\rel}(\Z_{q})\xrightarrow{\delta}\tilde{H}_{0}(\Sigma,\Z_{q})\to0\label{eq:homology-SES}
\end{equation}
where $\delta$ is the connecting map from the long exact sequence
of homology with $\Z_{q}$ coefficients for the pair $(S,\Sigma)$.
We use the notation $\tilde{H}_{*}$ for reduced homology. Henceforth
we will think of $H_{1}(\Z_{q})$ as a subgroup of $H_{1}^{\rel}(\Z_{q})$
using the second arrow of (\ref{eq:homology-SES}). The short exact
sequence (\ref{eq:homology-SES}) is composed of $\Z[\Gamma]$-modules,
where the module structure on $H_{1}(\Z_{q})$ and $H_{1}^{\rel}(\Z_{q})$
comes from the action of $\Gamma$ on first homology, and the module
structure on $\tilde{H}_{0}(\Sigma,\Z_{q})$ comes from the trivial
action of $\Gamma$. Then each map in (\ref{eq:homology-SES}) is
a $\Z[\Gamma]$-module homomorphism. 

If we fix an element $\sigma\in\tilde{H}_{0}(\Sigma,\Z)$, then for
each $q$ we obtain by applying the map $\tilde{H}_{0}(\Sigma,\Z)\to\tilde{H}_{0}(\Sigma,\Z_{q})$
an element $\sigma_{q}\in\tilde{H}_{0}(\Sigma,\Z_{q})$. We then consider
the submodule $\Z_{q}\sigma_{q}\subset\tilde{H}_{0}(\Sigma,\Z_{q})$.
The preimage of this submodule under $\delta$ will play an important
role in this paper so we give it a name:
\[
H_{1}^{\sigma}(\Z_{q})\stackrel{\mathrm{def}}{=}\delta^{-1}\left(\Z_{q}\sigma_{q}\right)\subset H_{1}^{\rel}(\Z_{q}).
\]
Notice one special case is if $\sigma=0$ then $H_{1}^{\sigma}(\Z_{q})=\ker(\delta)=H_{1}(\Z_{q})$.
We obtain a map
\[
\Theta_{q}^{\sigma}:\PMod(S,\Sigma)\to\Aut(H_{1}^{\sigma}(\Z_{q}))
\]
and define $G_{\sigma}(q)$ to be the image of $\Theta_{q}^{\sigma}$.
Hence we obtain covering spaces $\M(\Theta_{q}^{\sigma})$ of $\M$. 

The main theorem of this section is the following.
\begin{thm}
\label{thm:Uniform-spectral-gap}Let $\sigma\in\tilde{H}_{0}(\Sigma,\Z)$
be given by $\sigma=z_{1}-z_{2}$ where $z_{1},z_{2}\in\Sigma$. Then there is $Q_0 \in \Z_+$ such that 
for $q$ coprime to $Q_0$, the family of covering spaces $\M(\Theta_{q}^{\sigma})$
have a uniform spectral gap.
\end{thm}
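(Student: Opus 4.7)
The plan is to follow the framework of \cite{MAGEEKZ} for establishing uniform spectral gap on congruence covers, extending it from absolute homology $H_1(\Z_q)$ to the relative piece $H_1^\sigma(\Z_q)$. Recall that $H_1^\sigma(\Z_q)$ sits in a short exact sequence of $\Z[\Gamma]$-modules
\[
0 \to H_1(\Z_q) \to H_1^\sigma(\Z_q) \to \Z_q\sigma_q \to 0,
\]
so that, taking $\sigma = z_1 - z_2$ and identifying $\Z_q\sigma_q \cong \Z_q$, the group $G_\sigma(q)$ is a subgroup of the $\PMod(S,\Sigma)$-automorphisms preserving this extension and the symplectic form on $H_1(\Z_q)$; a standard splitting argument identifies it with (a quotient of) a semidirect product of shape $\Z_q^{2g} \rtimes \Sp_{2g}(\Z_q)$.

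The machinery of \cite{MAGEEKZ} reduces a uniform spectral gap statement on $\M(\Theta_q^\sigma)$ to two inputs about the image $J_\sigma(q) \subseteq G_\sigma(q)$ of a fixed finite set of generators of the Rauzy-Veech monoids under $\Theta_q^\sigma$: first, the Strong Approximation property $J_\sigma(q) = G_\sigma(q)$ for $q$ coprime to some explicit $Q_0$; second, the combination of \textbf{Property I} (uniform expansion of the Cayley graphs $\Cayley(G_\sigma(q), S_0)$ for a fixed generating multiset $S_0$) and \textbf{Property II} (a polynomial in $q$ lower bound on the dimension of every nontrivial irreducible representation of $G_\sigma(q)$ that does not factor through a proper quotient). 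Given these, \cite{MAGEEKZ} converts an expander estimate on the coset space into an $L^2$ bound on spherical matrix coefficients on $\M(\Theta_q^\sigma)$, which rules out complementary series close to the trivial representation uniformly in $q$.

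First I would set up $G_\sigma(q)$ precisely and restate what Strong Approximation says in this relative setting; this is Proposition \ref{prop:Strong-Approximation} and is proved in the Appendix by Guti\'errez-Romo, so I invoke it as a black box. Next I would establish \textbf{Property I}. The key point is that expansion of the Cayley graphs of the family $\{\Z_q^{2g} \rtimes \Sp_{2g}(\Z_q)\}$ with respect to a fixed generating set that projects to a generating set of $\Sp_{2g}(\Z)$ follows from Kazhdan's property (T) for $\Z^{2g} \rtimes \Sp_{2g}(\Z)$, due to Burger \cite{BURGER}, in the usual way (property (T) gives a uniform Kazhdan constant, and pushing forward to the finite quotients produces a uniform spectral gap on the regular representation modulo the constants). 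Combined with Strong Approximation and the fact that the generators of the Rauzy-Veech monoid do generate the required subgroup modulo $q$, this yields uniform expansion.

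Then I would prove \textbf{Property II}. Here I exploit the semidirect product decomposition: by Clifford theory, every irreducible representation of $\Z_q^{2g} \rtimes \Sp_{2g}(\Z_q)$ is induced from an irreducible representation of the stabilizer in $\Sp_{2g}(\Z_q)$ of a character of $\Z_q^{2g}$. Representations on which the abelian factor acts nontrivially have dimension at least the size of the $\Sp_{2g}(\Z_q)$-orbit of the corresponding character, and this orbit grows polynomially in $q$ (by a Witt-type transitivity statement, cf.\ the remarks after Theorem \ref{thm:main-theorem}); representations factoring through $\Sp_{2g}(\Z_q)$ inherit the polynomial lower bound from the known quasirandomness of $\Sp_{2g}(\Z_q)$ used in \cite{MAGEEKZ}. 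Combining these two cases and taking care of the small primes by choosing $Q_0$ appropriately gives a uniform polynomial lower bound. The main obstacle is the interaction between the extension class of $H_1^\sigma$ and the representation theory, i.e.\ ensuring that the quasirandomness estimate remains valid after passing from $\Sp_{2g}(\Z_q)$ to the full semidirect product; once this is in hand, feeding Property I and Property II into the argument of \cite{MAGEEKZ} yields the uniform spectral gap on $\M(\Theta_q^\sigma)$ claimed in Theorem \ref{thm:Uniform-spectral-gap}.
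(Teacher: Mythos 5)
Your overall architecture matches the paper's exactly: reduce to the machinery of \cite{MAGEEKZ}, which requires Strong Approximation (Proposition \ref{prop:Strong-Approximation}, black-boxed from the Appendix), \textbf{Property I} (uniform expansion of Cayley graphs of $G_\sigma(q)$ with respect to generators coming from the Rauzy--Veech monoid), and \textbf{Property II} (quasirandomness via Clifford theory for the semidirect product). Your treatment of \textbf{Property II} is in essence the paper's: split representations according to whether the abelian factor $H_1(\Z_q)$ acts nontrivially, use Witt transitivity on unimodular vectors for the first case, and inherit the $\Sp_{2g}(\Z_q)$ bound from \cite{MAGEEKZ} for the second, after a CRT reduction to prime powers.

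There is, however, a genuine gap in your treatment of \textbf{Property I}. You argue that property (T) for $\Z^{2g}\rtimes\Sp_{2g}(\Z)$ (Burger) directly gives uniform expansion of the Cayley graphs, using a fixed generating set ``that projects to a generating set of $\Sp_{2g}(\Z)$.'' But the Cayley graphs must be taken with respect to a fixed finite set $T$ inside the Rauzy--Veech monoid $\Lambda_\pi$, and the group $\GG_\sigma$ generated by $\Theta^\sigma_\infty(T)$ is \emph{not known} to be a finite index subgroup of $G_\sigma(\infty)\cong H_1(\Z)\rtimes\Sp(H_1(\Z),\cap)$. The Zorich conjecture (Theorem \ref{thm:Zorich-conj}) only gives finite index in $\Sp(H_1(\Z),\cap)$ at the level of absolute homology; for $\sigma\neq 0$ the relative statement is open. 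Your argument as written presupposes that $\GG_\sigma$ is finite index in the full semidirect product, which is precisely what the paper remarks is \emph{not known}. The paper resolves this by a dichotomy on $U_\sigma(\infty)\cap\GG_\sigma$: if this intersection is trivial, then $\GG_\sigma$ is isomorphic to a finite index subgroup of $\Sp_{2g}(\Z)$ and one invokes Kazhdan's (T) directly; if it is nontrivial, an argument via strong approximation and invariance of the induced submodule of $H_1(\Z)$ forces $\GG_\sigma$ to be finite index in $H_1(\Z)\rtimes\Sp(H_1(\Z),\cap)$, and then Burger's (T) applies. Either way $\GG_\sigma$ has (T) with the fixed Rauzy--Veech generating set, and Strong Approximation then removes invariant vectors in $\ell^2_0(G_\sigma(q))$. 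You need this case analysis (or an equivalent workaround) to close Property I; as stated, your proof assumes more than is known about the image of the Rauzy--Veech monoid in relative homology.
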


Theorem \ref{thm:Uniform-spectral-gap} was proved when $\sigma=0$
(i.e. $z_{1}=z_{2}$) by Magee in \cite{MAGEEKZ}, using the work of Guti\'{e}rrez-Romo
\cite[Theorem 1.1]{GR} as a crucial input. \emph{Throughout the rest of this Section
\ref{sec:Covering-spaces-and} we therefore assume $\sigma=z_{1}-z_{2}\neq0$.}

\subsection{Rauzy-Veech monoids and groups\label{subsec:Rauzy-Veech-monoids-and}}

Let $\M$ denote a connected component of a stratum of unit area abelian
differentials for $\left(S,\Sigma\right)$. Following Veech \cite{VEECHGAUSS}
we can associate to $\M$ a finite labeled directed graph called a
\emph{Rauzy diagram.}

Here we give some background on Rauzy diagrams. Let $\A$ be a finite
alphabet with $|\A|=d\geq3$. Our fundamental combinatorial objects
are pairs
\[
\pi=\left(\begin{array}{c}
\pi_{t}\\
\pi_{b}
\end{array}\right)=\left(\begin{array}{cccc}
\alpha_{t,1} & \alpha_{t,2} & \ldots & \alpha_{t,d}\\
\alpha_{b,1} & \alpha_{b,2} & \ldots & \alpha_{b,d}
\end{array}\right)
\]
where each of $\pi_{t}$ and $\pi_{b}$ are orderings of the elements
of $\A$. The letters `t' and `b' stand for `top' and `bottom'. We
think of both $\pi_{t}$ and $\pi_{b}$ as rows of elements of $\A$,
with each element appearing in one place of the row. We say that $\pi$
is \emph{irreducible }if there is no $1\leq j<d$ such that $\{\alpha_{t,1},\ldots,\alpha_{t,j}\}=\{\alpha_{b,1},\ldots,\alpha_{b,j}\}$.
We say that $\pi$ is \emph{degenerate }if any of the following hold
for $1\leq j<d$:
\begin{itemize}
\item $\alpha_{t,j}$ is the last entry of the bottom row, $\alpha_{t,j+1}$
is the first entry of the bottom row, and $\alpha_{t,1}$ directly
follows $\alpha_{t,d}$ in the bottom row. 
\item $\alpha_{t,j+1}$ is the first entry of the bottom row and $\alpha_{t,1}$
directly follows $\alpha_{t,j}$ in the bottom row.
\item $\alpha_{t,j}$ is the last entry of the bottom row and $\alpha_{t,j+1}$
directly follows $\alpha_{t,d}$ in the bottom row.
\end{itemize}
Otherwise we say $\pi$ is \emph{nondegenerate. }For example, if $\A=\{A,B,C,D\}$
then 
\[
\left(\begin{array}{cccc}
A & B & C & D\\
B & D & A & C
\end{array}\right)
\]
is an irreducible nondegenerate pair.

There are two natural operations on pairs $\pi$ of permutations that
arise naturally from the Rauzy induction algorithm. The operations
are called `top' and 'bottom' operations. The top operation on $\pi$
modifies the bottom row by moving the occurrence of $\alpha_{b,d}$
to the immediate right of the occurrence of $\alpha_{t,b}$. The bottom
operation modifies the top row by moving $\alpha_{t,d}$ to the right
of $\alpha_{b,d}$. As in \cite{AGY} we say that the last element
of the unchanged row is the \emph{winner }and the last element of
the row of $\pi$ that is to be changed the \emph{loser. }One can
check that if $\pi$ is irreducible and nondegenerate then so are
any pairs obtained from applying top and bottom operations.

For fixed $\A$, the connected components of the directed graph whose
vertices are pairs of permutations and edges given by top and bottom
moves are called \emph{Rauzy diagrams. }The vertices of a Rauzy diagram
are called a \emph{Rauzy class.} Veech's theory of zippered rectangles
\cite{VEECHGAUSS} associates to each Rauzy class a connected component
$\M$ of a stratum of the moduli space of translation surfaces. The
relationship between $\M$ and $\RR(\M)$ is that for each $\pi\in\RR(\M)$,
there is a multitude (depending on a parameter space) of ways to build
a translation surface with modulus in $\M$ according to Veech's \emph{zippered
rectangles construction}.\emph{ }We will not need the full details
of the zippered rectangles construction in this paper, so omit it.
Veech proved that every such connected component $\M$ arises in this
way from a Rauzy diagram $\RR$, and moreover we can and will assume
that the elements of $\RR$ are irreducible and nondegenerate. We
fix for each $\M$ a choice of such $\RR=\RR(\M)$ throughout the
rest of the paper.

Using the zippered rectangles construction, Yoccoz \cite[\S 9.2]{YOCCOZ}
associates to each $\pi\in\RR$ a canonical translation surface $\tilde{S}_{\pi}$.
In what follows it is the underlying topological surface $S_{\pi}$
of this canonical translation surface that will be relevant. The surface
$S_{\pi}$ comes with a set of marked points $\Sigma_{\pi}$ that
coincides with the conical singularities of $\tilde{S}_{\pi}$. 

We now define \emph{Rauzy-Veech monoids}. Rauzy-Veech monoids are
usually considered at the level of (absolute) homology, but can be
defined at the level of (relative) homotopy, following Avila-Matheus-Yoccoz
\cite[\S 4.1]{AMY}. To each arrow of $\gamma:\pi\to\pi'$ of $\D$
(\emph{loc. cit.)} build an orientation preserving homeomorphism $H_{\gamma}:(S_{\pi},\Sigma_{\pi})\to(S_{\pi'},\Sigma_{\pi'})$.
Moreover it is possible to name all the elements of all $\Sigma_{\pi}$
for $\pi\in\RR$ in such a way that each $H_{\gamma}$ respects the
namings. We write $[H_{\gamma}]$ for the isotopy class of this homeomorphism
relative to $\Sigma_{\pi}$. If $L$ is any directed loop in $\RR$
beginning and ending at $\pi$ then we get by composing the $[H_{\gamma}]$
along the loop an element $[H_{L}]$ in the pure mapping class group
$\PMod(S_{\pi},\Sigma_{\pi}).$ Composing directed loops yields a
monoid and the image of this monoid under $L\mapsto[H_{L}]$ is a
submonoid $\Lambda_{\pi}\leq\PMod(S_{\pi},\Sigma_{\pi})$ that we
call the \emph{(pure) modular Rauzy-Veech monoid}.

\subsection{Input for the spectral gap}

The methods of \cite{MAGEEKZ} for proving Theorem \ref{thm:Uniform-spectral-gap}
for $\sigma=0$ relies on `black-box' results about the covering spaces
$\M(\Theta_{q}^{\sigma})$. As long as these can be checked when $\sigma\neq0$,
the methods of \cite{MAGEEKZ} will also apply to prove Theorem \ref{thm:Uniform-spectral-gap}
for $\sigma\neq0$. We explain the necessary inputs in this section. 

Again, let $\Gamma=\PMod(S,\Sigma)$. Each $H_{1}^{\sigma}(\Z_{q})$
is a $\Z[\Gamma]$-submodule of $H_{1}^{\rel}(\Z_{q})$ that fits
into the short exact sequence of $\Z[\Gamma]$-modules
\begin{equation}
0\to H_{1}(\Z_{q})\to H_{1}^{\sigma}(\Z_{q})\xrightarrow{\delta}\Z_{q}\sigma_{q}\to0.\label{eq:SES-of-modules}
\end{equation}
Moreover it is important to note that if $q_{1}|q_{2}$ with $q_{1},q_{2}\in\{2,\ldots,\infty\}$
then there is a commutative diagram of $\Z[\Gamma]$-modules
\begin{equation}
\xymatrix{0\ar[r]\ar[d] & H_{1}(\Z_{q_{2}})\ar[r]\ar[d] & H_{1}^{\sigma}(\Z_{q_{2}})\ar[r]\ar[d] & \tilde{H}_{0}(\Sigma,\Z_{q_{2}})\ar[r]\ar[d] & 0\ar[d]\\
0\ar[r] & H_{1}(\Z_{q_{1}})\ar[r] & H_{1}^{\sigma}(\Z_{q_{1}})\ar[r] & \tilde{H}_{0}(\Sigma,\Z_{q_{1}})\ar[r] & 0
}
\label{eq:commutative}
\end{equation}
where the vertical maps are surjections coming from tensor product
with $\Z_{q_{1}}$ over $\Z$. Recall we previously defined $G_{\sigma}(q)$
to be the image of the action of $\Gamma$ on $H_{1}^{\sigma}(\Z_{q})$.

Let $\pi$ be a vertex of a irreducible, non-degenerate Rauzy diagram
corresponding to a connected component $\M$ of a stratum of unit
area abelian differentials for $(S,\Sigma)$. Fixing a homeomorphism
$(S,\Sigma)\to(S_{\pi},\Sigma_{\pi})$ we can view the $\Theta_{q}^{\sigma}$
as homomorphisms $\Theta_{q}^{\sigma}:\PMod(S_{\pi},\Sigma_{\pi})\to G_{\sigma}(q)$.

The necessary inputs to the methods of \cite{MAGEEKZ} are the following
two properties.
\begin{description}
\item [{Property~I.~Uniform~Expansion}] Recall $\Lambda_{\pi}$ is a
Rauzy-Veech monoid as defined in $\S$\ref{subsec:Rauzy-Veech-monoids-and}.
We require that there is some element $\pi\in\RR$, and a fixed finite
set of elements $T\subset\Lambda_{\pi}$ such that the family of Cayley
graphs $\Cayley(G_{\sigma}(q),\Theta_{q}^{\sigma}(T\cup T^{-1}))$
have a uniform spectral gap; that is, the second largest eigenvalue
of the adjacency operator is $\leq2|T|-\epsilon$ for some $\epsilon>0$
independent of $q.$
\item [{Property~II.~Quasirandomness}] We say a representation of $G_{\sigma}(q)$
is \emph{new }if it doesn't factor through any representation of $G_{\sigma}(q_{1})$
for some $q_{1}|q$. We then require that there are some constants
$C,\eta>0$ such that if $(\rho,V)$ is a new nontrivial irreducible
representation of $G_{q}$ that $\dim(V)\geq C|G(q)|^{\eta}$.
\end{description}
Once these two properties have been established, Theorem \ref{thm:Uniform-spectral-gap}
is proved, mutatis mutandis, exactly as in the case of $\sigma=0$
that was treated in \cite{MAGEEKZ}.

\subsection{Properties of $G_{\sigma}(q)$\label{subsec:properties-of-the-groups}}

Recall we assume $\sigma=z_{1}-z_{2}\neq0$ and write $\Gamma=\PMod(S,\Sigma)$.
Let $G(q)\leq\Aut(H_{1}(\Z_{q}))$ be the image of the action of $\Gamma$
on $H_{1}(\Z_{q})$. We obtain a short exact sequence of groups 
\begin{equation}
1\to U_{\sigma}(q)\xrightarrow{\iota}G_{\sigma}(q)\xrightarrow{\beta}G(q)\to1\label{eq:group-SES}
\end{equation}
where $U_{\sigma}(q)\leq G_{\sigma}(q)$ is the subgroup of $G_{\sigma}(q)$
that acts trivially on $H_{1}(\Z_{q})$. 

The first issue to address is to describe $G(q)$ and $G_{\sigma}(q)$,
since at this point, they have just been defined as the image of certain
homomorphisms. First we will note in Lemmas \ref{lem:G(q)} and \ref{lem:unipotent-identification}
that for $q$ odd, $U_{\sigma}(q)$ and $G(q)$ are as large as they
can be, and then in Lemma \ref{lem:G_sigma is a semidirect product}
we will describe the group structure of $G_{\sigma}(q)$.

Note that each group $H_{1}(\Z_{q})$ comes with a symplectic bilinear
form $\cap$ given by cap (intersection) product. 
\begin{lem}
\label{lem:G(q)}For $q$ odd or $\infty$, the action of $G(q)$
on $H_{1}(\Z_{q})$ induces an isomorphism $G(q)\cong\Sp(H_{1}(\Z_{q}),\cap)$.
\end{lem}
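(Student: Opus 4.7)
The containment $G(q) \leq \Sp(H_{1}(\Z_q),\cap)$ is immediate, since every element of $\Gamma = \PMod(S,\Sigma)$ is represented by an orientation-preserving self-homeomorphism of $S$, and any such homeomorphism preserves the intersection pairing on $H_{1}(S,\Z_q)$. The content of the lemma is therefore the reverse inclusion, i.e.\ the surjectivity of the representation $\Gamma \to \Sp(H_{1}(\Z_q),\cap)$.

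I would first treat the case $q = \infty$. The action of $\Gamma$ on the \emph{absolute} first homology $H_{1}(S,\Z)$ does not see the marked points, so it factors through the natural surjection $\PMod(S,\Sigma)\twoheadrightarrow \Mod(S)$ coming from the (iterated) Birman exact sequence obtained by forgetting the points of $\Sigma$ one at a time. It is then classical that $\Mod(S)\twoheadrightarrow \Sp(H_{1}(S,\Z),\cap) = \Sp_{2g}(\Z)$: Dehn twists generate $\Mod(S)$, each Dehn twist along a simple closed curve $\gamma$ acts on $H_1(S,\Z)$ as the symplectic transvection $v\mapsto v + (\gamma\cap v)\,\gamma$, and symplectic transvections generate $\Sp_{2g}(\Z)$. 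This gives $G(\infty) = \Sp(H_{1}(\Z),\cap)$.

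For finite odd $q$, I would combine the $q = \infty$ case with strong approximation for the symplectic group. The reduction map $\Sp_{2g}(\Z)\to \Sp_{2g}(\Z_q)$ is surjective for every $q\geq 1$: a standard argument lifts elementary symplectic transvections integrally. The action of $\Gamma$ on $H_{1}(\Z_q)$ is obtained by composing the action on $H_{1}(\Z)$ with this reduction, so the surjectivity of $\Gamma\to\Sp(H_{1}(\Z),\cap)$ together with the surjectivity of the reduction yields $G(q)=\Sp(H_{1}(\Z_q),\cap)$. The identification of $G(q)$ with $\Sp(H_{1}(\Z_q),\cap)$ then follows since the codomain already records the $\Gamma$-invariant cap product.

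There is no substantial obstacle here; the argument is a bookkeeping exercise assembling three well-known facts -- the Birman-type surjection $\PMod(S,\Sigma)\twoheadrightarrow \Mod(S)$, the classical symplectic representation $\Mod(S)\twoheadrightarrow \Sp_{2g}(\Z)$, and strong approximation for $\Sp_{2g}$. The odd hypothesis on $q$ is not actually needed for this particular lemma, but it presumably aligns the statement with the descriptions of $U_{\sigma}(q)$ and $G_{\sigma}(q)$ later in Section \ref{subsec:properties-of-the-groups}, where $q=2$ introduces complications (symmetric bilinear forms modulo $2$ carry extra orthogonal structure).
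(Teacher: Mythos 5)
Your proof is correct and takes essentially the same route as the paper's one-line proof: surjectivity of the symplectic representation $\Gamma\twoheadrightarrow\Sp(H_1(\Z),\cap)$ followed by surjectivity of reduction mod $q$ (strong approximation for $\Sp_{2g}(\Z)$). Your write-up simply fills in the standard details (Birman forgetting maps, Dehn twists as transvections) that the paper treats as known, and your observation that the oddness hypothesis is not actually used here is accurate.
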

\begin{proof}
This follows from the fact that $\Gamma$ maps onto $\Sp( H_1(\Z) ,\cap)$ and that $\Sp( H_1(Z) , \cap)$ maps onto $\Sp( H_1(\Z_q) , \cap)$ for all odd $q$ by strong approximation for $\Sp_{2g}(\Z)$. 
\end{proof}

\begin{lem}
\label{lem:unipotent-identification}Let $q$ be odd or $\infty$.
Let $\omega\in H_{1}^{\sigma}(\Z_{q})$ be such that $\delta(\omega)=\sigma_{q}$.
The map $\Phi_{\omega}:g\mapsto g\omega-\omega$ defines a group isomorphism
from $U_{\sigma}(q)$ to $H_{1}(\Z_{q})$. This isomorphism only depends
on $\sigma$ and not the choice of $\omega$. Therefore we may write
$\Phi_{q}=\Phi_{\omega}$ for the canonically obtained map. Given
an element $\alpha$ of $H_{1}(\Z_{q})$, $\Phi_{q}^{-1}(\alpha)\in U_{\sigma}(q)$
is characterized by
\[
\Phi_{q}^{-1}(\alpha)[\omega]=\omega+\alpha
\]
for all $\omega\in\delta^{-1}(\sigma_{q})$.
\end{lem}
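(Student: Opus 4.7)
The plan is to verify four properties of $\Phi_\omega$ in order, namely that it is well-defined as a map into $H_1(\Z_q)$, a group homomorphism, injective, and surjective; the remaining claims (independence of the choice of $\omega$ and the characterization of $\Phi_q^{-1}$) will then follow formally.

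To see well-definedness, note that since $\delta$ in the short exact sequence (\ref{eq:SES-of-modules}) is $\Gamma$-equivariant and $\Gamma$ acts trivially on $\tilde{H}_{0}(\Sigma,\Z_{q})$, we have $\delta(g\omega)=g\delta(\omega)=\delta(\omega)$, so $g\omega-\omega\in\ker\delta=H_{1}(\Z_q)$. The homomorphism property is a one-line calculation using that $g\in U_\sigma(q)$ fixes $H_1(\Z_q)$ pointwise:
\[
\Phi_\omega(gh)=gh\omega-\omega=(g\omega-\omega)+g(h\omega-\omega)=\Phi_\omega(g)+(h\omega-\omega)=\Phi_\omega(g)+\Phi_\omega(h),
\]
where the third equality uses $h\omega-\omega\in H_1(\Z_q)$ together with $g|_{H_1(\Z_q)}=\Id$. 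For injectivity, if $g\omega=\omega$ then every element of $H_1^\sigma(\Z_q)$ can be written as $k\omega+\beta$ with $k\in\Z_q$ and $\beta\in H_1(\Z_q)$, and then $g(k\omega+\beta)=kg\omega+g\beta=k\omega+\beta$; so $g$ acts as the identity on all of $H_1^\sigma(\Z_q)$, i.e.\ $g=1$ in $G_\sigma(q)$.

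The main obstacle is surjectivity, which I would handle via point-pushing elements coming from the Birman exact sequence. For each loop $\gamma\in\pi_1(S\setminus(\Sigma\setminus\{z_1\}),z_1)$, the point-push $P_\gamma\in\PMod(S,\Sigma)$ is isotopic to the identity as a self-homeomorphism of $S$ when one forgets the marked points, so it acts trivially on $H_1(\Z_q)$ and therefore lies in $U_\sigma(q)$. On the other hand, $P_\gamma$ drags the endpoint $z_1$ of a relative $1$-cycle $\omega$ from $z_2$ to $z_1$ around $\gamma$, giving $P_\gamma(\omega)=\omega+[\gamma]$ in $H_1^\rel(\Z_q)$ and hence $\Phi_\omega(P_\gamma)=[\gamma]$. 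Since the composition $\pi_1(S\setminus(\Sigma\setminus\{z_1\}),z_1)\to H_1(S,\Z)\to H_1(\Z_q)$ is surjective (any homology class can be represented by a loop at $z_1$ avoiding the other marked points), every $\alpha\in H_1(\Z_q)$ arises as $\Phi_\omega(P_\gamma)$ for a suitable $\gamma$.

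Once the isomorphism is in hand, independence of $\omega$ is immediate: for any $\omega'\in\delta^{-1}(\sigma_q)$ the difference $\omega'-\omega$ lies in $H_1(\Z_q)$ and is fixed by every $g\in U_\sigma(q)$, so $\Phi_{\omega'}(g)=g\omega'-\omega'=g\omega-\omega=\Phi_\omega(g)$, justifying the unambiguous notation $\Phi_q$. The characterization then follows: if $g=\Phi_q^{-1}(\alpha)$ then $g\omega=\omega+\alpha$ by construction, and the independence just proved upgrades this to $g\omega'=\omega'+\alpha$ for every $\omega'\in\delta^{-1}(\sigma_q)$. The trickiest part of the plan to execute cleanly will be the point-push computation on relative homology; verifying the formula $P_\gamma(\omega)-\omega=[\gamma]$ requires care with representatives and sign conventions, though the statement itself is standard.
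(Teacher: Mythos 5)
Your proof is correct, and all steps except surjectivity coincide with the paper's argument nearly verbatim (well-definedness via $\Gamma$-equivariance of $\delta$, the one-line homomorphism calculation, injectivity via the generation of $H_1^\sigma(\Z_q)$ by $\omega$ and $H_1(\Z_q)$, independence of $\omega$, and the deduction of the characterization of $\Phi_q^{-1}$). The only genuine difference is in the surjectivity step. You invoke the point-pushing map $P_\gamma$ from the Birman exact sequence, observe that it lies in $U_\sigma(q)$ because it is freely isotopic to the identity on $S$, and use the (standard but not proved here) formula $P_\gamma(\omega)=\omega+[\gamma]$ on relative homology together with the surjectivity of $\pi_1(S\setminus(\Sigma\setminus\{z_1\}),z_1)\to H_1(\Z_q)$. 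The paper instead produces the needed element of $U_\sigma(q)$ by hand as $D_\gamma D_{\gamma'}^{-1}$, where $\gamma$ is a nonseparating simple closed curve crossing the arc $A$ representing $\omega$ exactly once and $\gamma'$ is a free isotope of $\gamma$ disjoint from $A$; it then computes directly that this composite acts trivially on $H_1(\Z_q)$ and sends $[A]\mapsto[A]+[\gamma]$, and finishes using a basis of $H_1(\Z_q)$ given by nonseparating simple closed curves. The two constructions are geometrically nearly identical --- a point-push along a simple loop is exactly a product of two opposite Dehn twists along parallel copies of that loop --- so the trade-off is that your version is shorter and more conceptual (it names the natural source of such elements and quotes a known formula) while the paper's is more elementary and self-contained, since it verifies the relative-homology computation from scratch and only ever manipulates Dehn twists. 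Both are valid; if you keep your route, you should at least cite or briefly justify the formula $P_\gamma(\omega)-\omega=[\gamma]$, since (as you note yourself) that is the one place where the argument leans on an unproved identity.
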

\begin{proof}
The image of $\Phi_{\omega}$ is in $H_{1}(\Z_{q})$ since for any
$g\in U_{\sigma}(q)$, $\delta(g\omega-\omega)=g.\delta(\omega)-\delta(\omega)=0$,
so $g\omega-\omega\in\ker(\delta)=H_{1}(\Z_{q})$.

The map $\Phi_{\omega}$ is a homomorphism because $g_{1}g_{2}\omega-\omega=g_{1}(g_{2}\omega-\omega)+g_{1}\omega-\omega=g_{2}\omega-\omega+g_{1}\omega-\omega$;
here we used that $g_{2}\omega-\omega$ is in $H_{1}(\Z_{q})$ and
that $g_{1}$ acts trivially on $H_{1}(\Z_{q})$ when it is in $U_{\sigma}(q)$.

The map $\Phi_{\omega}$ is injective since if $g\omega-\omega=0$,
i.e. $g$ is in the kernel of $\Phi_{\omega}$, then $g$ fixes $\omega$.
Since from \eqref{eq:SES-of-modules} $H_{1}^{\sigma}(\Z_{q})$ is generated
by $\omega$ and $H_{1}(\Z_{q})$, and $g$ acts trivially on $H_{1}(\Z_{q})$,
$g$ acts trivially on $H_{1}^{\sigma}(q)$, so $g$ is the identity.

If $\omega'$ is another element of $H_{1}^{\rel}(\Z_{q})$ such that
$\delta(\omega')=\sigma_{q}$ then $\omega'=\omega+\beta$ with $\beta\in H_{1}(\Z_{q})$.
Then $\Phi_{\omega'}(g)=g\omega'-\omega'=g\omega-\omega+g\beta-\beta$
but for $g\in U_{\sigma}(q)$, $g\beta-\beta=0$. Therefore $\Phi_{\omega}$
doesn't depend on the choice of $\omega$.

To prove that $\Phi_{q}$ is surjective, first let $A$ be an arc
in $S$ that has endpoints $z_{1}$ and $z_{2}$ and is otherwise
disjoint from $\Sigma$, with no self intersections. Then the class
of $[A]\in H_{1}^{\rel}(\Z_{q})$ is such that $\delta([A])=\sigma_{q}$.
Therefore we may use $\omega=[A]$ in the definition of $\Phi_{q}$.
Let $\gamma$ be a nonseparating simple closed curve in $S$. After
an isotopy, we may assume $\gamma$ crosses $A$ in exactly one point.
Then a Dehn twist $D_{\gamma}\in\Gamma$ maps $D_{\gamma}([A])=[A]+[\gamma]$
where $[\gamma]$ is the class of $\gamma$ in $H_{1}(\Z_{q})$. On
the other hand, we may isotope $\gamma$ to obtain $\gamma'$ that
is disjoint from $A$. Then $D_{\gamma'}([A])=[A]$. Furthermore,
$D_{\gamma}D_{\gamma'}^{-1}$ acts trivially on $H_{1}(\Z_{q})$ and
$D_{\gamma}D_{\gamma'}^{-1}([A])=[A]+[\gamma]$. This proves all $[\gamma]\in H_{1}(\Z_{q})$
arising from nonseparating simple closed curves are in the image of
$\Phi_{q}$. To conclude, there is a standard basis of $H_{1}(\Z_{q})$
given by nonseparating simple closed curves, so $\Phi$ maps onto
$H_{1}(\Z_{q})$.
\end{proof}
\begin{lem}
\label{lem:G_sigma is a semidirect product}For $q$ odd, there is
a non-canonical isomorphism $G_{\sigma}(q)\cong H_{1}(\Z_{q})\rtimes G(q)\cong H_{1}(\Z_{q})\rtimes\Sp(H_{1}(\Z_{q}),\cap)$.
\end{lem}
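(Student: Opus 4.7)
The plan is to produce, for each choice of lift $\omega \in H_1^\sigma(\Z_q)$ of $\sigma_q$, an explicit bijection
\[
\Psi: G_\sigma(q) \to G(q) \times H_1(\Z_q), \qquad g \mapsto (\beta(g),\ g\omega - \omega),
\]
and then to verify that the group law transported through $\Psi$ is precisely that of the semidirect product $H_1(\Z_q) \rtimes G(q)$, with $G(q)$ acting on $H_1(\Z_q)$ by its natural (symplectic) action. The second isomorphism in the statement will then follow by substituting the identification $G(q) \cong \Sp(H_1(\Z_q), \cap)$ from Lemma \ref{lem:G(q)}, which is available since $q$ is odd.

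First I would check that $\Psi$ lands in $G(q) \times H_1(\Z_q)$. Since $G(q)$ acts trivially on $\tilde{H}_0(\Sigma, \Z_q)$ (all of $\Gamma$ does), one has $\delta(g\omega) = \delta(\omega) = \sigma_q$, so $g\omega - \omega \in \ker \delta = H_1(\Z_q)$. Injectivity of $\Psi$ is a direct consequence of Lemma \ref{lem:unipotent-identification}: if two elements have the same image under $\Psi$, their ratio lies in $\ker \beta = U_\sigma(q)$ and fixes $\omega$, so by the injectivity of $\Phi_q$ it is trivial. For surjectivity, the short exact sequence \eqref{eq:group-SES} provides a lift $g_0 \in G_\sigma(q)$ of any given $s \in G(q)$; writing $\alpha_0 = g_0\omega - \omega$ and invoking Lemma \ref{lem:unipotent-identification} to realize $\alpha - \alpha_0 \in H_1(\Z_q)$ as $\Phi_q(u)$ for some $u \in U_\sigma(q)$, the element $ug_0$ has image $(s, \alpha)$.

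Next I would compute the induced group law. If $\Psi(g_i) = (s_i, \alpha_i)$, then
\[
g_1 g_2 \omega = g_1(\omega + \alpha_2) = g_1 \omega + s_1 \alpha_2 = \omega + \alpha_1 + s_1 \alpha_2,
\]
where the second equality uses that $g_1$ acts on $H_1(\Z_q)$ through $s_1 = \beta(g_1)$. Hence $\Psi(g_1 g_2) = (s_1 s_2,\ \alpha_1 + s_1 \alpha_2)$, which is exactly the multiplication rule of $H_1(\Z_q) \rtimes G(q)$ for the natural action. Combining this with Lemma \ref{lem:G(q)} yields both displayed isomorphisms.

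There is no substantive obstacle here: all the real content has been packaged into Lemmas \ref{lem:G(q)} and \ref{lem:unipotent-identification}, and the remaining work is bookkeeping. The non-canonical nature of the isomorphism recorded in the statement reflects the choice of $\omega$ used to define $\Psi$; a different choice of lift would conjugate the semidirect product decomposition by an element of the unipotent subgroup $U_\sigma(q)$.
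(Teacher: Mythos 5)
Your proposal is correct and follows essentially the same route as the paper: the paper's proof also uses the map $g\mapsto (g\omega-\omega,\ \beta(g))$ (with the two coordinates in the opposite order), verifies it is a bijective homomorphism using Lemmas \ref{lem:G(q)} and \ref{lem:unipotent-identification}, and concludes; you merely check bijectivity first and then read off the group law, whereas the paper checks the homomorphism identity directly, but the content is identical.
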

\begin{proof}
Recall that $\beta$ is the surjection from (\ref{eq:group-SES}).
We define for some fixed $\omega\in\delta^{-1}(\sigma_{q})$
\[
\alpha:G_{\sigma}(q)\to H_{1}(\Z_{q})\rtimes G(q),\quad\alpha(g)\stackrel{\mathrm{def}}{=}(g\omega-\omega,\:\beta(g)).
\]
First we check $\alpha$ is a homomorphism. Indeed, 
\begin{align*}
\alpha(g_{1})\alpha(g_{2}) & =(g_{1}\omega-\omega,\:\beta(g_{1}))(g_{2}\omega-\omega,\:\beta(g_{2}))\\
 & =(g_{1}\omega-\omega+\beta(g_{1}).(g_{2}\omega-\omega),\:\beta(g_{1})\beta(g_{2}))\\
 & =(g_{1}g_{2}\omega-\omega,\:\beta(g_{1}g_{2}))=\alpha(g_{1}g_{2}).
\end{align*}
The map $\alpha$ is injective since if $\alpha(g)=(0,e)$ then $\alpha$
fixes $\omega$ and acts trivially on $H_{1}(\Z_{q})$, so acts trivially
on $H_{1}^{\sigma}(\Z_{q})$. Finally, $\alpha$ is surjective, since
clearly $\alpha(U_{\sigma}(\infty))=\{(\Phi(u),e)\::\:u\in U_{\sigma}(q)\}=H_{1}(\Z_{q})\rtimes\{e\}$
by Lemma \ref{lem:unipotent-identification}, and also clearly the
image of $\alpha$ projects onto $G(q)$, since $\beta$ is onto.
These two facts imply $\alpha$ is surjective.
\end{proof}
Finally in this section we make some observations about the orbits
of $G(q)$ and $G_{\sigma}(q)$ on $H_{1}(\Z_{q})$ and $H_{1}^{\sigma}(\Z_{q})$
that will be important later in our proof of a Siegel-Veech formula
(Theorem \ref{thm:siegelveechformula}). We will say that an element
$v$ of a free $\Z_{q}$-module $A$ is \emph{unimodular} if when
$b_{1},\ldots,b_{r}$ are a basis of $A$, and $v=\sum_{i=1}^{r}v_{i}b_{i}$
with $v_{i}\in\Z_{q}$, we have $(v_{1},\ldots,v_{r})=A$. For example,
any nonseparating simple closed curve on $S$ gives rise to a unimodular
element of $H_{1}(\Z_{q})$. The following incarnation of Witt's Theorem
is a direct application of \cite[Theorem 2.8]{KMcD}.
\begin{thm}[Witt's Theorem]
\label{thm:transitive-on-unimodular}For odd $q$ the group $G(q)\cong\Sp(H_{1}(\Z_{q}),\cap)$
acts transitively on unimodular vectors in $H_{1}(\Z_{q})$.
\end{thm}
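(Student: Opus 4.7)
The plan is to realize this as a direct application of Witt-type transitivity for symplectic modules over commutative rings in which $2$ is invertible, as formulated in \cite[Theorem 2.8]{KMcD}. By Lemma \ref{lem:G(q)} we may identify $G(q)$ with $\Sp(H_1(\Z_q),\cap)$, so it suffices to prove transitivity of the symplectic group on unimodular vectors. Since $q$ is odd, $2 \in \Z_q^\times$; since $H_1(\Z_q) \cong \Z_q^{2g}$ is a free $\Z_q$-module and the intersection pairing $\cap$ is non-degenerate (it has the standard symplectic Gram matrix in any symplectic basis coming from a geometric basis of curves on $S$), the hypotheses of the cited theorem are met and the result is immediate. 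So my plan is mostly to check these hypotheses carefully and invoke the theorem.

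If one prefers a self-contained argument, I would proceed by the standard reduction to completing unimodular vectors to symplectic bases. Given any two unimodular $v, v' \in H_1(\Z_q)$, the main claim to establish is that each extends to a symplectic basis $e_1=v, f_1, e_2, f_2, \ldots, e_g, f_g$ (respectively $e_1' = v', f_1', \ldots$); then the unique $\Z_q$-linear map sending one ordered basis to the other is automatically symplectic and sends $v$ to $v'$.

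To complete a unimodular $v$ to a symplectic basis I would argue by induction on $g$. Unimodularity of $v$ combined with non-degeneracy of $\cap$ implies that the $\Z_q$-linear functional $u \mapsto v \cap u$ is surjective onto $\Z_q$, so there exists $w \in H_1(\Z_q)$ with $v \cap w = 1$. The submodule $P = \Z_q v \oplus \Z_q w$ is then a hyperbolic plane, and the identity $u = (v \cap u)w - (w \cap u)v + (u - (v \cap u)w + (w \cap u)v)$ exhibits the standard orthogonal splitting $H_1(\Z_q) = P \oplus P^\perp$ with $P^\perp$ a free symplectic $\Z_q$-module of rank $2g-2$. Setting $e_1 = v,\ f_1 = w$, inductively choose a symplectic basis of $P^\perp$, and concatenate.

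The only genuine point of friction is verifying that the orthogonal complement $P^\perp$ is again free of rank $2g-2$ with a non-degenerate restricted form, so that the induction has a base; but this is a standard consequence of the explicit projection formula above, valid over any commutative ring (and used implicitly in the proof of \cite[Theorem 2.8]{KMcD}). Since this is exactly the content of that theorem in the relevant generality, I would simply cite it rather than reproduce the argument.
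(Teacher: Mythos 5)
Your proposal is correct and takes essentially the same route as the paper, whose entire ``proof'' of this statement is the citation to \cite[Theorem 2.8]{KMcD} immediately preceding it. The self-contained symplectic-basis-completion sketch you add as a fallback is also sound (the hyperbolic-plane splitting and the surjectivity of $u \mapsto v \cap u$ for unimodular $v$ are verified correctly over $\Z_q$ with $2$ invertible), but it is supplementary to what the paper does.
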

\begin{lem}
\label{lem;num-unimod}For odd $q$, the number of unimodular vectors
in $H_{1}(\Z_{q})$ is 
\begin{align*}
q^{2g}\prod_{\begin{subarray}{c}
p\text{ prime}\\
p|q
\end{subarray}}\left(1-\frac{1}{p^{2g}}\right).
\end{align*}
\end{lem}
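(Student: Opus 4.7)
The plan is to reduce to an entrywise count in $\Z_q^{2g}$ and then apply the Chinese Remainder Theorem. Since $S$ is a closed oriented surface of genus $g$, the $\Z$-module $H_1(S,\Z)$ is free of rank $2g$, and tensoring with $\Z_q$ shows that $H_1(\Z_q)$ is a free $\Z_q$-module of rank $2g$. Choosing a basis, I identify $H_1(\Z_q)$ with $\Z_q^{2g}$, and unimodular vectors become tuples $(v_1,\ldots,v_{2g})$ such that $(v_1,\ldots,v_{2g}) = \Z_q$ as an ideal.

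Next, I would write $q = \prod_{p|q} p^{k_p}$ and apply the Chinese Remainder Theorem $\Z_q \cong \prod_{p|q}\Z_{p^{k_p}}$, which induces a bijection
\[
\Z_q^{2g} \cong \prod_{p|q}\Z_{p^{k_p}}^{2g}
\]
that respects the property of being unimodular: a tuple is unimodular over $\Z_q$ if and only if each of its components is unimodular over $\Z_{p^{k_p}}$. This reduces the count to a product over primes $p | q$ of the number of unimodular vectors in $\Z_{p^{k_p}}^{2g}$.

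For the prime power count, I would observe that over the local ring $\Z_{p^k}$, a vector $(v_1,\ldots,v_{2g})$ is unimodular if and only if some $v_i$ is a unit, which is equivalent to saying the reduction of the vector modulo $p$ is nonzero in $\F_p^{2g}$. The number of vectors in $\Z_{p^k}^{2g}$ whose reduction mod $p$ is zero is $p^{2g(k-1)}$ (each coordinate must lie in $p\Z_{p^k}$, a set of size $p^{k-1}$). Hence the number of unimodular vectors over $\Z_{p^k}$ equals
\[
p^{2gk} - p^{2g(k-1)} = p^{2gk}\Bigl(1 - \tfrac{1}{p^{2g}}\Bigr).
\]

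Finally, multiplying across primes dividing $q$ yields
\[
\prod_{p|q}p^{2gk_p}\Bigl(1-\tfrac{1}{p^{2g}}\Bigr) = q^{2g}\prod_{\substack{p \text{ prime}\\ p|q}}\Bigl(1-\tfrac{1}{p^{2g}}\Bigr),
\]
which is the claimed formula. There is no real obstacle here; the only point that requires a brief verification is the characterization of unimodularity over the local ring $\Z_{p^k}$ as non-vanishing mod $p$, which follows because an element of $\Z_{p^k}$ is a unit exactly when it is nonzero mod $p$.
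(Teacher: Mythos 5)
Your proposal is correct and follows essentially the same route as the paper: decompose $H_1(\Z_q)$ via the Chinese Remainder Theorem into prime-power factors, characterize unimodularity at a prime power as non-vanishing modulo $p$, count by subtraction, and multiply.
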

\begin{proof}
If $q=\prod_{p\text{ prime}}p^{e(p)}$ then $H_{1}(\Z_{q})\cong\prod_{p\text{ prime}}H_{1}(\Z_{p^{e(p)}})$.
Under this identification, $v=(v_{p})_{p\text{ prime}}$ is unimodular
if and only if each $v_{p}$ is not in $pH_{1}(\Z_{p^{e(p)}})$. Therefore
there are $\prod_{p\text{ prime}}p^{2ge(p)}-p^{2g(e(p)-1)}=q^{2g}\prod_{\begin{subarray}{c}
p\text{ prime}\\
p|q
\end{subarray}}\left(1-\frac{1}{p^{2g}}\right)$ unimodular vectors in $H_{1}(\Z_{q})$.
\end{proof}
Now we turn to orbits of $G_{\sigma}(q)$ on $H_{1}^{\sigma}(\Z_{q})$.
Recall $\delta$ is the connecting homomorphism from (\ref{eq:homology-SES}).
\begin{lem}
\label{lem:transitive-relative}Let $q$ be odd. Suppose $\sigma=z_{1}-z_{2}\neq0$.
Then $G_{\sigma}(q)$ acts transitively on $\delta^{-1}(\sigma_{q})\subset H_{1}^{\sigma}(\Z_{q})$.
\end{lem}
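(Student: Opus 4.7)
The plan is to deduce transitivity directly from Lemma \ref{lem:unipotent-identification}, by exploiting the subgroup $U_\sigma(q) \leq G_\sigma(q)$.

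First, I would observe that $\delta^{-1}(\sigma_q)$ is an affine coset: since $\ker(\delta) = H_1(\Z_q)$ by the short exact sequence \eqref{eq:SES-of-modules}, any two elements $\omega, \omega'$ of $\delta^{-1}(\sigma_q)$ differ by some $\alpha \in H_1(\Z_q)$, i.e.\ $\omega' = \omega + \alpha$. To establish transitivity of $G_\sigma(q)$ on this coset, it therefore suffices to produce, for every $\alpha \in H_1(\Z_q)$, an element of $G_\sigma(q)$ that sends $\omega$ to $\omega + \alpha$.

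Next, I would invoke Lemma \ref{lem:unipotent-identification}: the map $\Phi_q : U_\sigma(q) \to H_1(\Z_q)$ is an isomorphism (this is precisely where the assumption $\sigma \neq 0$ and $q$ odd is used, through the surjectivity part of that lemma). Given $\alpha \in H_1(\Z_q)$, set $g = \Phi_q^{-1}(\alpha) \in U_\sigma(q) \subset G_\sigma(q)$. The characterization in Lemma \ref{lem:unipotent-identification} says exactly that $g[\omega] = \omega + \alpha$ for any $\omega \in \delta^{-1}(\sigma_q)$. Taking $\alpha = \omega' - \omega$ gives $g \omega = \omega'$, proving the claim.

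So the proof is essentially a one-line consequence of Lemma \ref{lem:unipotent-identification}: the subgroup $U_\sigma(q)$ alone already acts simply transitively on $\delta^{-1}(\sigma_q)$, and it sits inside $G_\sigma(q)$. There is no real obstacle here; all of the work has been done in establishing the surjectivity of $\Phi_q$ (where the Dehn twist argument and the basis of $H_1(\Z_q)$ by nonseparating simple closed curves are used). The only thing worth flagging is that one must verify $\delta^{-1}(\sigma_q)$ is nonempty, which holds because $\sigma_q$ lies in the image of $\delta$ (e.g.\ represented by the class of an arc from $z_1$ to $z_2$, as in the proof of Lemma \ref{lem:unipotent-identification}).
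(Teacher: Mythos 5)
Your proof is correct and follows exactly the same route as the paper's: both note that the difference $v_2 - v_1$ of two elements of $\delta^{-1}(\sigma_q)$ lies in $\ker(\delta) = H_1(\Z_q)$, and then apply the surjectivity statement of Lemma \ref{lem:unipotent-identification} to produce the required element of $U_\sigma(q) \subset G_\sigma(q)$. The only difference is that you explicitly flag nonemptiness of $\delta^{-1}(\sigma_q)$ and that $U_\sigma(q)$ already acts simply transitively, which the paper leaves implicit.
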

\begin{proof}
If $v_{1}$ and $v_{2}$ are in $\delta^{-1}(\sigma)$ then $v_{2}-v_{1}$
is in the kernel of $\delta$, hence can be regarded as an element
$\alpha\in H_{1}(\Z_{q})$. Now from Lemma \ref{lem:unipotent-identification}
there is an element $g_{\alpha}\in U_{\sigma}(q)$ such that $g_{\alpha}(v_{1})=v_{1}+\alpha=v_{2}$.
\end{proof}
\begin{lem}
\label{lem:num_hom_classes}If $\sigma=z_{1}-z_{2}\neq0$ then the
number of elements in $\delta^{-1}(\sigma_{q})$ is $|H_{1}(\Z_{q})|=q^{2g}$.
\end{lem}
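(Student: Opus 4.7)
The plan is straightforward: use the short exact sequence \eqref{eq:homology-SES} to identify $\delta^{-1}(\sigma_q)$ as a coset of $H_1(\Z_q)$, show it is nonempty, and then count.

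First I would observe that since $\delta$ is a homomorphism of abelian groups with kernel $H_1(\Z_q)$, any nonempty fiber $\delta^{-1}(\sigma_q)$ is a coset of $H_1(\Z_q)$ and hence has cardinality exactly $|H_1(\Z_q)|$. So the task splits into (i) showing the fiber is nonempty, and (ii) computing $|H_1(\Z_q)|$.

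For (i), I would reuse the arc construction already appearing in the proof of Lemma \ref{lem:unipotent-identification}: because $\sigma = z_1 - z_2$ with $z_1, z_2 \in \Sigma$, one can choose an embedded arc $A$ in $S$ from $z_2$ to $z_1$, otherwise disjoint from $\Sigma$. Then $[A] \in H_1^{\rel}(\Z_q)$ satisfies $\delta([A]) = z_1 - z_2 = \sigma_q$, so $\delta^{-1}(\sigma_q) \neq \emptyset$.

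For (ii), since $S$ is a closed orientable surface of genus $g$, $H_1(S, \Z) \cong \Z^{2g}$; tensoring with $\Z_q$ over $\Z$ (using the universal coefficient theorem, noting $H_0(S,\Z) \cong \Z$ is torsion-free so there is no $\mathrm{Tor}$ contribution) gives $H_1(\Z_q) = H_1(S, \Z_q) \cong \Z_q^{2g}$, which has $q^{2g}$ elements. Combining (i) and (ii) yields $|\delta^{-1}(\sigma_q)| = q^{2g}$.

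There is essentially no obstacle here; the only tiny thing to be careful about is verifying nonemptiness, but this is immediate from the arc construction already invoked elsewhere in the paper, so the proof is very short.
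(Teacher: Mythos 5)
Your proof is correct and follows essentially the same route as the paper's one-line argument (which notes that $\delta^{-1}(\sigma_q)$ carries a simply transitive $H_1(\Z_q)$-action, i.e.\ is a coset); you additionally spell out the nonemptiness of the fiber via the arc construction and the computation $|H_1(\Z_q)|=q^{2g}$, both of which the paper leaves implicit.
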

\begin{proof}
From (\ref{eq:homology-SES}), $\delta^{-1}(\sigma_{q})$ has a simply
transitive action of $H_{1}(\Z_{q})$.
\end{proof}

\subsection{Establishing the inputs to the uniform spectral gap I: Uniform expansion
of Cayley graphs.}\label{sec:bb1}

In this section we check the inputs to Theorem \ref{thm:Uniform-spectral-gap}
hold for the families $\{G_{\sigma}(q)\}$ where $\sigma=\sigma(\CC)$
as before. Recall $\Lambda_{\pi}$ is a Rauzy-Veech monoid defined
in $\S$\ref{subsec:Rauzy-Veech-monoids-and} where $\pi$ is a vertex
of a irreducible, non-degenerate Rauzy diagram corresponding to $\M$.
Our arguments depend on the following theorem that is the resolution
of the `Zorich conjecture' \cite[Appendix A.3 Conjecture 5]{ZORICHLEAVES}.
Recall $\Theta_{\infty}^{0}:\PMod(S_{\pi},\Sigma_{\pi})\to G(\infty)=\Sp(H_{1}(\Z),\cap)$
is the homomorphism obtained from the action of the mapping class
group on absolute homology.
\begin{thm}[{Avila-Matheus-Yoccoz \cite[Theorem 1.1]{AMY} ($\M$ hyperelliptic),
\\
Guti\'{e}rrez-Romo \cite[Theorem 1.1]{GR}}]
\label{thm:Zorich-conj} The monoid $\Theta_{\infty}^{0}(\Lambda_{\pi})$
generates a finite index subgroup of $\Sp(H_{1}(\Z),\cap)$.
\end{thm}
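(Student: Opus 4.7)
The plan is to show that $\Theta_{\infty}^{0}(\Lambda_{\pi})$ contains enough concrete elements that are recognizable as symplectic transvections on $H_1(\Z)$, and then to appeal to a classical generation result for $\Sp_{2g}(\Z)$. Since symplectic transvections along a set of vectors that spans $H_1(\Z)$ and is ``rich enough'' in symplectic directions generate a finite index subgroup of $\Sp(H_1(\Z),\cap)$, the problem reduces to exhibiting such a supply of transvections inside the Rauzy-Veech monoid.

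First, I would look for distinguished \emph{short} loops $L$ in the Rauzy diagram $\RR$ based at $\pi$, i.e. loops that return $\pi$ to itself after a small number of top/bottom moves in which a single letter is repeatedly the winner. Using Yoccoz's description of $\tilde{S}_\pi$ via the zippered rectangles construction, such loops correspond geometrically to multiple applications of Rauzy induction that isolate a single horizontal cylinder. The associated homeomorphism $[H_L]\in \PMod(S_\pi,\Sigma_\pi)$ should then act on absolute homology as a Dehn twist (or a product of commuting Dehn twists) around the core curve of this cylinder, and Dehn twists act on homology as symplectic transvections along the class of the twisted curve.

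Second, I would vary the choice of winner letter and base point $\pi' \in \RR$ (conjugating by a path from $\pi$ to $\pi'$) to produce transvections along a collection of homology classes $\{v_i\}\subset H_1(\Z)$. The bulk of the combinatorial work is to show that the classes $v_i$ obtained this way span a finite index sublattice of $H_1(\Z)$, and moreover that among them one can find pairs $v_i,v_j$ with $v_i\cap v_j\neq 0$ in a controlled way — enough to generate a finite index subgroup of $\Sp_{2g}(\Z)$ using the classical fact that transvections along a symplectic basis together with a small number of auxiliary transvections do so.

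The main obstacle is the combinatorial analysis of step two: keeping track of the homology classes of cylinder core curves as one moves around a general Rauzy diagram is delicate and depends on the connected component $\M$. In the hyperelliptic case, the hyperelliptic involution drastically simplifies the situation and one can do a more direct computation, which is the route of Avila-Matheus-Yoccoz. For general components, an inductive argument on the combinatorics of $\RR$ is required, explaining why Guti\'errez-Romo's treatment of the non-hyperelliptic components is substantially harder; I would expect to follow this inductive strategy, reducing the claim for a stratum $\M$ to an analogous claim for ``simpler'' strata obtained by letter removal or a similar surgery on Rauzy diagrams.
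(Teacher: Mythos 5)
This statement is not proved in the paper; it is quoted as a known theorem with references to Avila--Matheus--Yoccoz \cite{AMY} for the hyperelliptic components and to Guti\'{e}rrez-Romo \cite{GR} in general. So there is no in-paper proof to compare against line by line. What the paper \emph{does} contain, in the appendix, is a restatement of the structural input underlying those proofs: Lemma \ref{lem:copy} records that $\Xi_{\pi,\infty}^0$ contains the Dehn twists $T_{\gamma_\alpha}$ along the curves $\gamma_\alpha$ joining the two $\alpha$-sides of the polygon $P_\pi$, and that these twists generate the Rauzy--Veech group, and the appendix uses the simple extension/reduction machinery of Avila--Viana and Guti\'{e}rrez-Romo to transfer generation statements between strata.

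Your sketch is consistent with that strategy: loops in the Rauzy diagram in which a single letter repeatedly wins give rise to Dehn twists (hence symplectic transvections on $H_1(\Z)$), one then varies the winner and the basepoint to collect a family of transvections, and finally one argues that these transvections generate a finite index subgroup of $\Sp(H_1(\Z),\cap)$, with the non-hyperelliptic case handled by an inductive reduction on strata. This is indeed the outline of \cite{AMY} and \cite{GR}, and the ``letter removal'' idea you allude to is exactly the simple reduction/extension formalism used in the appendix. Two caveats worth flagging: (i) the assertion that ``transvections along a symplectic basis together with a small number of auxiliary transvections'' suffice is not automatic and is precisely the delicate step -- one needs the intersection pattern of the $\gamma_\alpha$ to be rich enough, which is the heart of \cite{GR}; and (ii) the geometric description of the $\gamma_\alpha$ as core curves of a ``single horizontal cylinder'' is not quite how they arise -- they are the curves joining the two $\alpha$-sides of the Veech polygon, and the identification of the corresponding monoid element with $T_{\gamma_\alpha}$ itself requires a careful argument. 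As a plan your sketch points in the right direction, but the substance is in the cited references rather than in this paper.
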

We will also rely crucially on the following strong approximation
property for the Rauzy-Veech monoid that is proved by Guti\'{e}rrez-Romo
in Appendix \ref{ap}.
\begin{prop}[Strong Approximation]
\label{prop:Strong-Approximation}For each connected component $\M$
of a stratum, there is $\pi\in\RR(\M)$ such that for all positive
odd integers $q$, $\Lambda_{\pi}$ maps onto $G_{\sigma}(q)$ under
the `action on mod q relative homology' map $\Theta_{q}^{\sigma}:\PMod(S_{\pi},\Sigma_{\pi})\to G_{\sigma}(q)$.
\end{prop}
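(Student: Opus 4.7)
The plan is to exploit the semidirect product decomposition $G_\sigma(q) \cong H_1(\Z_q) \rtimes \Sp(H_1(\Z_q),\cap)$ provided by Lemma \ref{lem:G_sigma is a semidirect product}, and thereby reduce surjectivity onto $G_\sigma(q)$ to the two separate assertions: (i) the composition $\beta \circ \Theta_q^\sigma : \Lambda_\pi \to G(q)$ is surjective, and (ii) the intersection of $\Theta_q^\sigma(\Lambda_\pi)$ with the normal subgroup $U_\sigma(q) \cong H_1(\Z_q)$ is all of $U_\sigma(q)$. Since any submonoid of a finite group is automatically a subgroup (every element has finite order, so its inverse is a positive power), conditions (i) and (ii) together force $\Theta_q^\sigma(\Lambda_\pi) = G_\sigma(q)$.

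For (i), I would combine Theorem \ref{thm:Zorich-conj} with strong approximation for $\Sp_{2g}(\Z)$. The Zorich conjecture gives that $\Theta_\infty^0(\Lambda_\pi)$ is of finite index in $\Sp(H_1(\Z),\cap)$, and hence contains a principal congruence subgroup $\Gamma(N)$ for some $N$. Strong approximation for $\Sp_{2g}$ then forces surjection of $\Lambda_\pi$ onto $\Sp(H_1(\Z_q),\cap)$ for every odd $q$ coprime to $N$; extending this to all odd $q$ requires additional combinatorial input inside $\Lambda_\pi$ at the primes dividing $N$.

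For (ii), observe that $U_\sigma(q)$ is normal in $G_\sigma(q)$ with quotient $G(q)$ acting on it by the standard symplectic representation, via the identification of Lemma \ref{lem:unipotent-identification}. Granting (i), the group $\Theta_q^\sigma(\Lambda_\pi) \cap U_\sigma(q)$ is automatically a $G(q)$-invariant $\Z_q$-submodule of $H_1(\Z_q)$. By Theorem \ref{thm:transitive-on-unimodular} (Witt), $\Sp(H_1(\Z_q),\cap)$ acts transitively on unimodular vectors, and unimodular vectors generate $H_1(\Z_q)$ as a $\Z_q$-module. Hence it suffices to exhibit a single element of $\Lambda_\pi$ whose image in $G_\sigma(q)$ lies in $U_\sigma(q)$ and corresponds under $\Phi_q$ to a unimodular element of $H_1(\Z_q)$. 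A natural model, following the proof of surjectivity of $\Phi_q$ in Lemma \ref{lem:unipotent-identification}, is a product of Dehn twists $D_\gamma D_{\gamma'}^{-1}$ where $\gamma,\gamma'$ are isotopic in $S$ but intersect a chosen arc from $z_1$ to $z_2$ differently.

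The main obstacle is precisely this last construction: one has no direct geometric grip on $\Lambda_\pi$, only access to it as isotopy classes associated to loops at $\pi$ in the Rauzy diagram via the zippered rectangles correspondence. Concretely realizing a mapping class of the form $D_\gamma D_{\gamma'}^{-1}$, with the required relative-homology translation behaviour, as an explicit combinatorial loop in $\RR(\M)$ for every connected component $\M$, is a delicate case-by-case task, and is what I expect the appendix to carry out, together with the residual combinatorial argument needed to handle the primes dividing $N$ in step (i).
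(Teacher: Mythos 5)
Your reduction matches the appendix exactly: to show $\Theta_q^\sigma(\Lambda_\pi)$ is all of $G_\sigma(q) \cong H_1(\Z_q)\rtimes G(q)$, it suffices (since submonoids of finite groups are subgroups, as you observe) to hit all of $G(q)$ on the quotient and to pick up a full set of translations in $U_\sigma(q)$, the latter via the $G(q)$-invariance argument and Witt's theorem. However, the two gaps you flag are real, and the appendix closes them by routes you did not anticipate.

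For your step (i), the worry about ``additional combinatorial input at primes dividing $N$'' evaporates because the Avila--Matheus--Yoccoz/Guti\'errez-Romo theorem gives not merely finite index but the precise containment $\Theta_\infty^0(\Gamma(2)) \leq \Xi_{\pi,\infty}^0$; combined with the elementary observation that for odd $q$ one has $\Theta_q^0(T_c) = \Theta_q^0\bigl((T_c^2)^{(q+1)/2}\bigr) \in \Theta_q^0(\Gamma(2))$ for every Dehn twist $T_c$ (Corollary~\ref{cor:gamma2}), this yields surjectivity onto $G(q)$ for \emph{all} odd $q$ with no bad primes to handle. Your proposal as written, which only knows $\Gamma(N)\subset\Theta_\infty^0(\Lambda_\pi)$ for an unspecified $N$, cannot conclude without this sharper input.

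For your step (ii), the spirit of $D_\gamma D_{\gamma'}^{-1}$ is indeed what gets used, but the realization inside $\Lambda_\pi$ is not done by hunting for a bespoke combinatorial loop. Instead, the appendix chooses the Rauzy class representative $\pi$ to be built by a sequence of genus-preserving simple extensions from a minimal-stratum permutation $\pi'$ on an alphabet $\A'\subsetneq\A$, arranged (Lemma~\ref{lem:insertions}) so that a lift $\sigma'$ of $\sigma$ lies in the $\Z$-span of $\{e_\alpha\}_{\alpha\in\A\setminus\A'}$. By Lemma~\ref{lem:copy} the Rauzy--Veech group is generated by the Dehn twists $T_{\gamma_\alpha}$, $\alpha\in\A$, and the subcollection indexed by $\A'$ both fixes $\sigma'$ and already hits $\Theta_\infty^0(\Gamma(2))$ on absolute homology. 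One then finds a new letter $\alpha\in\A\setminus\A'$ whose twist moves $\sigma_q'$, corrects it mod $q$ by an element $g$ of the first subcollection so that $T_{\gamma_\alpha}g^{-1}$ acts trivially on $H_1(\Z_q)$, and this is the sought translation $(h,\Id)$. This is structurally cleaner than trying to realize an ad hoc $D_\gamma D_{\gamma'}^{-1}$: it exploits the already-known generating set of the Rauzy--Veech group and the flexibility of choosing $\pi$ within its Rauzy class, rather than producing a new combinatorial loop from scratch.

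So: correct skeleton, correct identification of the two obstructions, but the proposal is genuinely incomplete as written, and the two missing ingredients are (a) the exact $\Gamma(2)$ statement of the Zorich conjecture and (b) the simple-extension construction of $\pi$ that makes the translation element visible among the standard Dehn twist generators.
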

\begin{rem}
Proposition \ref{prop:Strong-Approximation} is known when $\sigma=0$.
In this case $G_{\sigma}(\infty)=G(\infty)=\Sp(H_{1}(\Z),\cap)$ and
Proposition \ref{prop:Strong-Approximation} follows from Theorem
\ref{thm:Zorich-conj} together with strong approximation for finite
index subgroups of $\text{\ensuremath{\Sp}}_{2g}(\Z)$ . More precisely,
it is known that $\Theta_{\infty}(\Lambda_{\pi})$ contains the principal
congruence subgroup of $\Sp(H_{1}(\Z),\cap)$ of level 2, and that
this congruence subgroup projects onto $G(q)$ for all odd $q>1$
(see the Appendix, Proof of Corollary \ref{cor:gamma2}).

If we knew for $\sigma\neq0$ that $\Theta_{\infty}^{\sigma}(\Lambda_{\pi})$
generates a finite index subgroup of $G_{\sigma}(\infty)\cong H_{1}(\Z)\rtimes G(\infty)$
then the strong approximation hypothesis would follow readily. However,
this is not known at the current time for general components $\M$.
It is an amusing point of this paper that we can work around this
lack of knowledge, in particular, in the proof of Proposition \ref{prop:SSA}
below, we do not know which case occurs, but we win either way.
\end{rem}
Using Kazhdan's property (T), we can strengthen the strong approximation
statement of Proposition \ref{prop:Strong-Approximation} into the
uniform expansion of related Cayley graphs. This type of result has
been called \emph{super-strong} \emph{approximation} by some researchers;
see for example \cite{BO}. 
\begin{prop}[Super-strong approximation]
\label{prop:SSA}Let $\sigma\neq0$. Then \textbf{Uniform Expansion
}holds for the family of homomorphisms $\{\Theta_{q}^{\sigma}:\PMod(S_{\pi},\Sigma_{\pi})\to G_{\sigma}(q)\::\:q\text{ odd}\}$
and the Rauzy-Veech monoid $\Lambda_{\pi}$.
\end{prop}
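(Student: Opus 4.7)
The plan is to combine Kazhdan's property (T) for the ambient group $G_{\sigma}(\infty)\cong H_{1}(\Z)\rtimes \Sp_{2g}(\Z)$---which holds for $g\geq 2$ by Burger's theorem \cite{BURGER}---with the strong approximation from Proposition \ref{prop:Strong-Approximation}. This parallels the template of \cite{MAGEEKZ} for $\sigma=0$, where property (T) of $\Sp_{2g}(\Z)$ combined with the Zorich conjecture (Theorem \ref{thm:Zorich-conj}) forced $\Theta_{\infty}^{0}(\Lambda_{\pi})$ to have finite index in $\Sp_{2g}(\Z)$ and hence inherit property (T); in the present $\sigma\neq 0$ setting the analogous finite-index statement is not available, so the argument must be made insensitive to this gap.

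I would first pick a finite set $T\subset\Lambda_{\pi}$ with two properties: $\Theta_{\infty}^{0}(T)$ generates a finite-index subgroup of $\Sp(H_{1}(\Z),\cap)$, possible by Theorem \ref{thm:Zorich-conj}, and $\Theta_{q}^{\sigma}(T)$ generates $G_{\sigma}(q)$ for every odd $q$, which can be arranged using Proposition \ref{prop:Strong-Approximation} after possibly enlarging $T$. To prove uniform expansion, let $\rho$ be a unitary representation of $G_{\sigma}(q)$ with no invariant vectors, and pull it back via the reduction $G_{\sigma}(\infty)\twoheadrightarrow G_{\sigma}(q)$ to a representation $\tilde{\rho}$ of $G_{\sigma}(\infty)$, still without invariants. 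The congruence kernel $N_{q}:=\ker(G_{\sigma}(\infty)\to G_{\sigma}(q))$ then acts trivially on $\tilde{\rho}$. Writing $H:=\langle \Theta_{\infty}^{\sigma}(T)\rangle\leq G_{\sigma}(\infty)$, the generating property yields $H\cdot N_{q}=G_{\sigma}(\infty)$. If $H$ happens to have finite index in $G_{\sigma}(\infty)$, then $H$ inherits property (T) from Burger's theorem, and $\Theta_{\infty}^{\sigma}(T)$ is a Kazhdan set for $H$ with some constant $\epsilon(T)>0$ independent of $q$; any $H$-invariant vector in $\tilde{\rho}$ is automatically $N_{q}$-invariant, hence $G_{\sigma}(\infty)$-invariant, hence zero, and the uniform spectral gap follows by the standard Kazhdan-to-expander bound.

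The principal obstacle is the case where $H$ has infinite index, which, as the remark after Proposition \ref{prop:Strong-Approximation} underlines, cannot be ruled out a priori. The ``amusing'' resolution is to bypass property (T) for $H$ and to work directly with property (T) of $G_{\sigma}(\infty)$: because $N_{q}$ acts trivially on any $\tilde{\rho}$ factoring through $G_{\sigma}(q)$, the distinction between $H$ and the full group $G_{\sigma}(\infty)$ is invisible to such representations. Concretely, I would perform a Mackey-type decomposition of $\tilde{\rho}$ along the characters of the abelian normal subgroup $U_{\sigma}(q)\cong\Z_{q}^{2g}$: the trivial-character isotypic is a representation of $G(q)$ without invariants, handled by the uniform spectral gap for $G(q)$ from \cite{MAGEEKZ}; the nontrivial-character isotypics are governed by the stabilizer subgroups of nontrivial characters in $G(q)$, where Burger-type property (T) yields Kazhdan constants uniform in $q$. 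Combining the two regimes delivers the desired $q$-uniform spectral gap for $\Cayley(G_{\sigma}(q),\Theta_{q}^{\sigma}(T\cup T^{-1}))$, irrespective of the index of $H$.
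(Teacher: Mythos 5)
Your overall plan (strong approximation $+$ a case analysis $+$ property (T) $+$ the standard Kazhdan-to-expander implication) matches the paper, and your Case~1 is correct. But there is a significant misreading of how the paper resolves the ``amusing point,'' and your proposed workaround for the problematic case has a genuine gap.

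The paper never has to contend with $\GG_\sigma := \langle\Theta_\infty^\sigma(\Lambda_\pi)\rangle$ having infinite index in a Kazhdan group. Its two cases are dictated by whether $U_\sigma(\infty)\cap\GG_\sigma$ is trivial or not, and the crucial step in Case~2 is to show that if this intersection is nonzero then $\GG_\sigma$ automatically has \emph{finite} index in $G_\sigma(\infty)\cong H_1(\Z)\rtimes\Sp(H_1(\Z),\cap)$. The mechanism is: $U_\sigma(\infty)\cap\GG_\sigma$ is a nonzero $\GG$-invariant submodule of $H_1(\Z)$ of some rank $r\geq1$; reducing mod a suitable prime $p$ (available by strong approximation, Corollary~\ref{cor:gamma2}) gives a nonzero $\Sp(H_1(\F_p),\cap)$-invariant subspace of $H_1(\F_p)$; but $H_1(\F_p)$ is irreducible as an $\Sp$-module (e.g.\ by transitivity on nonzero vectors, Theorem~\ref{thm:transitive-on-unimodular}), so $r=2g$. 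Hence $U_\sigma(\infty)\cap\GG_\sigma$ is finite index in $H_1(\Z)$ and $\GG_\sigma$ is finite index in $G_\sigma(\infty)$, so $\GG_\sigma$ inherits property (T) from Burger's theorem. In other words, the dichotomy is ``finite index in $\Sp_{2g}(\Z)$'' versus ``finite index in $H_1(\Z)\rtimes\Sp_{2g}(\Z)$'' --- you win either way, with property (T) of the \emph{subgroup} $\GG_\sigma$, not of the ambient group. Your proposal misses this irreducibility step entirely, which is the actual content of the resolution.

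Your fallback --- a Mackey decomposition of $\ell_0^2(G_\sigma(q))$ along characters of $U_\sigma(q)$, handling the nontrivial-character isotypics ``by stabilizer subgroups with Burger-type property (T) uniform in $q$'' --- is not a proof. The generating set is $\Theta_q^\sigma(T)$, whose preimage generates $\GG_\sigma$, not $G_\sigma(\infty)$. If $\GG_\sigma$ really did have infinite index, property (T) of $G_\sigma(\infty)$ tells you nothing about a generating set of the smaller group, and replacing it with property (T) of stabilizers of characters does not resolve this: one still needs to know that the specific Rauzy--Veech generators move vectors on those isotypics, uniformly in $q$, which is not supplied. The paper's argument avoids this entirely by establishing that the infinite-index scenario cannot occur.
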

\begin{proof}
Let $\GG_{\sigma}\leq G_{\sigma}(\infty$) be the group generated
by $\Theta_{\infty}^{\sigma}(\Lambda_{\pi})$. Then $\GG_{\sigma}$
fits into the short exact sequence
\[
1\to U_{\sigma}(\infty)\cap\GG_{\sigma}\to\GG_{\sigma}\to\GG\to1
\]
 where $\GG$ is the group generated by the action of $\Lambda_{\pi}$
on $H_{1}(\Z)$. By Theorem \ref{thm:Zorich-conj}, $\GG$ is a finite
index subgroup of $\Sp(H_{1}(\Z),\cap)$. We now distinguish two cases:
\begin{casenv}
\item $U_{\sigma}(\infty)\cap\GG_{\sigma}=\{1\}$. This implies $\GG_{\sigma}$
is isomorphic to $\GG$, a finite index subgroup of $\Sp(H_{1}(\Z),\cap)$.
\item $U_{\sigma}(\infty)\cap\GG_{\sigma}\neq\{1\}.$ Recalling from Lemma
\ref{lem:unipotent-identification} that $U_{\sigma}(\infty)$ is
naturally identified with $H_{1}(\Z)$, this implies that $U_{\sigma}(\infty)\cap\GG_{\sigma}$
is identified with a nonzero submodule of $H_{1}(\Z)$, moreover,
this submodule must be $\GG$-invariant. Let $1\leq r\leq2g$ denote
its rank. By strong approximation for $\GG$ (Corollary \ref{cor:gamma2}) we can
find a prime $p$ such that $\GG$ maps onto $\Sp(H_{1}(\F_{p}),\cap)$
and moreover, $U_{\sigma}(\infty)\cap\GG_{\sigma}$ maps to a dimension
$r$ $\Sp(H_{1}(\F_{p}),\cap)$-invariant vector subspace of $H_{1}(\F_{p})$.
But the only such invariant subspace, given $r\geq1$, is the whole
of $H_{1}(\F_{p})$. This fact follows for example since $\Sp(H_{1}(\F_{p}),\cap)$
acts transitively on nonzero vectors in $H_{1}(\F_{p})$ by Theorem
\ref{thm:transitive-on-unimodular}. Therefore $U_{\sigma}(\infty)\cap\GG_{\sigma}$
is identified with a rank $2g$ subgroup $A\leq H_{1}(\Z)$. In particular,
$A$ is finite index in $H_{1}(\Z)$. Therefore, $\GG_{\sigma}$ is
finite index in $G_{\sigma}(\infty)\cong H_{1}(\Z)\rtimes\Sp(H_{1}(\Z),\cap)$.
\end{casenv}
In either case there is a finite set $T\subset\Lambda_{\pi}$ such
that $\Theta_{\infty}^{\sigma}(T\cup T^{-1})$ generates $\GG_{\sigma}$.
We will establish \textbf{Uniform Expansion}\textbf{\emph{ }}using
this generating set. Let $\tilde{T}=\Theta_{\infty}^{\sigma}(T)\subset\GG_{\sigma}$.
It is sufficient to prove that if $f_{q}:\GG_{\sigma}\to G_{\sigma}(q)$
is the reduction modulo $q$ map, then the family of Cayley graphs
\begin{equation}
\{\:\Cayley(G_{\sigma}(q),f_{q}(\tilde{T}\cup\tilde{T}^{-1}))\:\}_{q\in\QQ}\label{eq:Cayley-family}
\end{equation}
 are uniform expanders. We write $\ell^{2}(G_{\sigma}(q))$ for functions
on $G_{\sigma}(q)$ with the standard Euclidean inner product, and
$\ell_{0}^{2}(G_{\sigma}(q))$ for the subspace of functions that
are orthogonal to the constant function. We obtain a unitary representation
$\rho_{q}$ of $\GG_{\sigma}$ on $\ell_{0}^{2}(G_{\sigma}(q))$,
and as is well-known, the uniform expansion of the family of graphs
in (\ref{eq:Cayley-family}) is equivalent to the existence of some
$\epsilon>0$, such that for all $q\in\QQ$, and all $v\in\ell_{0}^{2}(G_{\sigma}(q))$
with $\|v\|=1$, there is some element $t$ of $\tilde{T}$ such that
$\|\rho_{q}(t).v-v\|\geq\epsilon$. 

We will use Kazhdan's property (T) for $\GG_{\sigma}$ to deduce this,
but to do so, we must first note that there are no $\GG_{\sigma}$-invariant
vectors in $\ell_{0}^{2}(G_{\sigma}(q))$. This follows from Proposition
\ref{prop:Strong-Approximation}.

If Case 1 above occurs, then we are finished by property (T) for finite
index subgroups of $\Sp_{2g}(\Z)$ \cite{KAZHDAN}. If Case 2 occurs,
then we know that $H_{1}(\Z)\rtimes\Sp(H_{1}(\Z),\cap)$ has property
(T) by a result of Burger \cite[pg. 62, Example 2]{BURGER}. Hence
since $\GG_{\sigma}$ is finite index in $G_{\sigma}(\infty$), it
also has property (T). The result follows.
\end{proof}

\subsection{Establishing the inputs to the uniform spectral gap II: Quasirandomness
estimates.}\label{sec:bb2}

We continue to assume that $\sigma\neq0$. First we treat the specific
case that $q$ is a prime power.
\begin{lem}
\label{lem:pprime-power}There are constants $C>0$ and $D>0$ such
that if $q=p^{N}$ with p an odd prime and $N\geq1$ then any new irreducible
representation $(\rho, V)$ of $G_{\sigma}(p^{N})$ has dimension $\geq Cp^{ND}=Cq^{D}$.
\end{lem}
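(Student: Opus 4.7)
The plan is to use Mackey's machine on the semidirect product decomposition $G_\sigma(p^N)\cong U_N \rtimes H_N$ from Lemma~\ref{lem:G_sigma is a semidirect product}, with $U_N = H_1(\Z_{p^N})\cong \Z_{p^N}^{2g}$ and $H_N = \Sp(H_1(\Z_{p^N}),\cap)\cong\Sp_{2g}(\Z_{p^N})$, and thereby reduce the desired bound for $G_\sigma(p^N)$ to the quasirandomness of $H_N$ already established by Magee in the $\sigma=0$ case of \cite{MAGEEKZ}. By the Mackey--Wigner classification, every irreducible representation of $G_\sigma(p^N)$ has the form $\rho = \mathrm{Ind}_{U_N\rtimes H_{N,\chi}}^{G_\sigma(p^N)}(\chi\tilde\tau)$ for a character $\chi \in \hat U_N$ and an irreducible $\tau$ of the stabilizer $H_{N,\chi}:=\Stab_{H_N}(\chi)$, of dimension $[H_N:H_{N,\chi}]\cdot \dim\tau$.

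Writing $\Gamma_N := \ker(H_N\to H_{N-1})$, the kernel of the reduction $G_\sigma(p^N)\to G_\sigma(p^{N-1})$ is $K_N = p^{N-1}U_N\rtimes\Gamma_N$. A short direct computation---using that elements $I+p^{N-1}X$ of $\Gamma_N$ act trivially on $p^{N-1}U_N$ modulo $p^N$---shows $K_N$ is abelian (in fact a direct product), and that $\Gamma_N\subseteq H_{N,\chi}$ precisely when $\chi$ factors through $U_{N-1}$. Combined with the standard description of the kernel of an induced representation, this gives the characterization that $\rho$ is new if and only if either \textbf{(A)} $\chi$ does not factor through $U_{N-1}$, or \textbf{(B)} $\chi$ factors through $U_{N-1}$ and $\tau|_{\Gamma_N}$ is nontrivial.

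Case \textbf{(A)} is handled by orbit counting: the identification $\hat U_N\cong U_N$ via the symplectic form sends $\chi$ to a unimodular vector, and by Witt's theorem (Theorem~\ref{thm:transitive-on-unimodular}) together with Lemma~\ref{lem;num-unimod} its $H_N$-orbit has size $p^{2gN}(1-p^{-2g})\geq c\,p^{2gN}$, yielding $\dim\rho\geq c\,q^{2g}$. In Case \textbf{(B)}, every $H_N$-conjugate of $\chi$ is still trivial on the $H_N$-invariant subgroup $p^{N-1}U_N$, so $\rho|_{p^{N-1}U_N}$ is trivial; combined with the non-triviality of $\rho|_{K_N}$ this forces $\rho|_{\{0\}\rtimes\Gamma_N}$ to be nontrivial. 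Restricting $\rho$ to $H_N\hookrightarrow G_\sigma(p^N)$ and decomposing $\rho|_{H_N} = \bigoplus_i n_i\tau_i$ into $H_N$-irreducibles, some summand $\tau_{i_0}$ must then be nontrivial on $\Gamma_N$, and is therefore a new irreducible representation of $\Sp_{2g}(\Z_{p^N})$.

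Invoking the quasirandomness of $\Sp_{2g}(\Z_{p^N})$ from the $\sigma=0$ case of \cite{MAGEEKZ} produces constants $C_0,D_0>0$ independent of $p$ and $N$ with $\dim\tau_{i_0}\geq C_0\,p^{ND_0}$; since $\tau_{i_0}$ is a direct summand of $\rho|_{H_N}$, we conclude $\dim\rho\geq C_0\,q^{D_0}$ in Case \textbf{(B)}. Taking $D:=\min(2g,D_0)>0$ and adjusting the constant $C$ combines the two cases and delivers the claimed bound $\dim V\geq C q^D$. The only conceptually non-routine step is the Mackey-level characterization of newness, but this follows directly from the abelian structure of $K_N$, so I do not anticipate any substantial obstacle.
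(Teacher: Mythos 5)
Your argument is essentially the paper's: both split according to whether the new representation is nontrivial on the unipotent part $p^{N-1}H_1(\Z_{p^N})$ of the level-$p^{N-1}$ congruence kernel (handled by Witt's theorem and the unimodular-orbit count) or on the linear part $\Gamma_N=\ker(G(p^N)\to G(p^{N-1}))$ (handled by restricting to the $\Sp$ factor and invoking the quasirandomness of $\Sp_{2g}(\Z_{p^N})$ from \cite{MAGEEKZ}). The Mackey--Wigner dressing is unnecessary overhead --- the paper simply observes that the two subgroups generate $G_\sigma(p^N,p^{N-1})$, so a new $\rho$ must be nontrivial on at least one of them; also note your parenthetical claim that $K_N$ is abelian fails for $N=1$ (where $\Gamma_1=\Sp_{2g}(\F_p)$), though this does not affect the argument.
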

\begin{proof}
For $r\leq N$ we define the congruence subgroup of level $p^{r}$
in $G(p^{N})$, denoted by $G(p^{N},p^{r})$, to be the kernel of
the reduction mod $p^{r}$ map $G(p^{N})\to G(p^{r}).$ Similarly
we define $G_{\sigma}(p^{N},p^{r})$. Note $G(p^{N},p^{r})\leq G(p^{N},p^{r-1})$.
The statement that $\rho$ is new is equivalent to the statement that
$G_{\sigma}(p^{N},p^{r})$ is not in the kernel of $\rho$ for any
$0\leq r\leq N-1$, which is equivalent to $G_{\sigma}(p^{N},p^{N-1})$
not being in the kernel of $\rho$. 

Note that under the identification $G_{\sigma}(p^{N})\cong H_{1}(\Z_{p^{N}})\rtimes G(p^{N})$,
$G_{\sigma}(p^{N},p^{N-1})$ corresponds to $p^{N-1}H_{1}(\Z_{p^{N}})\rtimes G(p^{N},p^{N-1})$.
Therefore if $\rho$ is new then $\ker(\rho)$ cannot contain both
the congruence subgroups $G(p^{N},p^{N-1})$ and $p^{N-1}H_{1}(\Z_{p^{N}})$,
since these groups generate $G_{\sigma}(p^{N},p^{N-1})$. Hence at
least one of the following cases occurs:

\textbf{Case 1. $\ker(\rho)$ doesn't contain $p^{N-1}H_{1}(\Z_{p^{N}})$.}
In this case we decompose $V$ into subspaces $V_{\chi}$ where $H_{1}(\Z_{p^{N}})\leq G_{\sigma}(p^{N})$
acts by the character $\chi$:

\[
V=\bigoplus_{\chi\in\widehat{H_{1}(\Z_{p^{N}})}}V_{\chi}.
\]
By using the nondegenerate symplectic form $\cap$ on $H_{1}(\Z_{p^{N}})$
we obtain an isomorphism
\begin{equation}
H_{1}(\Z_{p^{N}})\to\widehat{H_{1}(\Z_{p^{N}})},\quad v\mapsto\exp\left(\frac{2\pi i\:v\cap\bullet}{p^{N}}\right).\label{eq:duality-identificiation}
\end{equation}
The conjugation action of $G_{\sigma}(p^{N})$ on the normal abelian
subgroup $U_{\sigma}(p^{N})$, under the identification $U_{\sigma}(q)\cong H_{1}(\Z_{p^{N}})$,
factors through the action of $\Sp(H_{1}(\Z_{p^{N}}),\cap$) on $H_{1}(\Z_{p^{N}})$.
As such, if $V_{\chi}\neq\emptyset$ then $V_{\chi'}\neq\emptyset$
for all $\chi'=\chi\circ g$ with $g\in\Sp(H_{1}(\Z_{p^{N}}),\cap)$
and the identification (\ref{eq:duality-identificiation}) identifies
the action of $\Sp(H_{1}(\Z_{p^{N}}),\cap)$ on $\chi\in\widehat{H_{1}(\Z_{p^{N}})}$
with the action on $H_{1}(\Z_{p^{N}}).$ The upshot of this discussion
is that if we can find a character $\chi$ of $H_{1}(\Z_{p^{N}})$
with $V_{\chi}\neq\emptyset$, corresponding to $v\in H_{1}(\Z_{p^{N}})$
under (\ref{eq:duality-identificiation}), then $\dim V$ is at least
the size of the orbit $\Sp(H_{1}(\Z_{p^{N}}),\cap).v$.

The assumption that $\ker(\rho)$ doesn't contain $p^{N-1}H_{1}(\Z_{p^{N}})$
exactly translates to the existence of a character $\chi$ of $H_{1}(\Z_{p^{N}})$
with $V_{\chi}\neq0$ corresponding via (\ref{eq:duality-identificiation})
to $v\in H_{1}(\Z_{p^{N}})$ where $v$ is unimodular. The number
of unimodular vectors in $H_{1}(\Z_{p^{N}})$ is $p^{2gN}\left(1-\frac{1}{p^{2g}}\right)$
(see Lemma \ref{lem;num-unimod}) so this implies by Theorem \ref{thm:transitive-on-unimodular}
\[
\dim(V)\geq|G(p^{N}).v|=p^{2gN}\left(1-\frac{1}{p^{2g}}\right)\geq\frac{p^{2gN}}{2}.
\]
\textbf{Case 2. $\ker(\rho)$ doesn't contain $G(p^{N},p^{N-1})$.
}Then we can restrict $\rho$ to the $G(p^{N})$ factor of $G_{\sigma}(p^{N})$
to obtain $\rho'$ that contains a new irreducible representation
of $G(p^{N})$. Hence from \cite[Prop. 5.1]{MAGEEKZ}, for some $C>0$
and $D>0$, $\dim V\geq Cp^{ND}$.
\end{proof}
\begin{proof}[Proof of {\bf Property II: Quasirandomness}]
We reduce to the case that $q$ is a prime power using the Chinese
remainder theorem. Let $q=\prod_{p}p^{e_{p}}$ where $p$ are distinct
odd primes. The Chinese remainder theorem gives an isomorphism
\[
G_{\sigma}(\Z_{q})\cong\prod_{p}G_{\sigma}(\Z_{p^{e_{p}}}).
\]
This implies that $\rho$ splits as a tensor product
\[
V=\otimes W_{p}
\]
 where $W_{p}$ is an irreducible representation of $G_{\sigma}(\Z_{p^{e_{p}}})$.
Also, since $\rho$ is new, each $W_{p}$ must be a new representation
of $G_{\sigma}(\Z_{p^{e_{p}}})$. Therefore using Lemma \ref{lem:pprime-power}
we get
\[
\dim(\rho)\geq\prod_{p}Cp^{e_{p}D}\geq C^{\omega(q)}q^{D}
\]
where $\omega(q)$ denotes the number of distinct prime factors of
$q$. We have $\omega(q)=O(\frac{\log q}{\log\log q})$ so we get
for some $c>0$, 
\[
\dim(\rho)\geq q^{D-\frac{c}{\log\log q}}.
\]
So for $q$ sufficiently large (bigger than a constant depending on
$g$), we have $\dim(\rho)\geq q^{D/2}$. The result follows.
\end{proof}

\section{Counting saddle connections}

\subsection{Holonomy and configurations of saddle connections}\label{holo}

We will denote by $\tilde{S}$ a translation surface structure on
$(S,\Sigma)$ whose isotopy class lies in $\X_{\kappa}$. A \emph{saddle
connection }on $\tilde{S}$ is a flat directed geodesic in $S-\Sigma$
with end points in $\Sigma$. A saddle connection is \emph{closed}
if its endpoints coincide. Following Eskin-Masur-Zorich \cite{EMZ},
we define a \emph{configuration of saddle connections} on $(S,\Sigma)$.
There are two types of configurations depending on whether the saddle
connections of that configuration will join two distinct zeros, or
be closed.

\textbf{A configuration $\CC$ of saddle connections joining distinct
zeros} consists of the following data:
\begin{enumerate}
\item A pair of points\footnote{Here we refine the Eskin-Masur-Zorich notion of configuration by specifying
the points, and not simply the values of $\kappa$ at these points.} $z_{1},z_{2}\in\Sigma$.
\item A number $p$ describing how many saddle connections we consider,
and a list of angle parameters $a'_{i},a''_{i}$, $1\leq i\leq p$.
\end{enumerate}
Given the above data, we say that an ordered tuple $(\gamma_{1},\ldots,\gamma_{p})$
of saddle connections on $\tilde{S}$ is \emph{of type $\CC$} if
the $\gamma_{i}$ are disjoint, each join $z_{1}$ to $z_{2}$, directed
from $z_{1}$ to $z_{2}$, are homologous, the cyclic order (clockwise)
that the $\gamma_{i}$ emanate from $z_{1}$ agrees with the ordering
of the tuple, and the angle between $\gamma_{i}$ and $\gamma_{i+1}$
at $z_{1}$ is $2\pi(a'_{i}+1)$, and the angle between $\gamma_{i}$
and $\gamma_{i+1}$ at $z_{2}$ is $2\pi(a''_{i}+1)$.

\textbf{A configuration $\CC$ of closed saddle connections} consists
of the following data:
\begin{enumerate}
\item A number $p$ specifying the number of saddle connections and a subset
$J\subset\{1,\ldots,p\}$.
\item Angle parameters $a'_{i},a''_{i},b'_{i},b''_{i}$ that describe the
angles between incident saddle connections.
\item A fixed function $Z:\{1,\ldots,p\}\to\Sigma$ that specifies the endpoints
of the saddle connections. Here again we refine the Eskin-Masur-Zorich
definition by specifying the endpoints of the saddle connections.
\end{enumerate}
Now an ordered tuple of saddle connections $(\gamma_{1},\ldots,\gamma_{p})$
is of type $\CC$ if they are disjoint, closed, homologous to one
another in absolute homology, the endpoints of $\gamma_{i}$ are at
$Z(i)\in\Sigma$ and further conditions are met depending on $J$
and the angle parameters; since these are quite involved and do not
add much to this paper we refer the reader to \cite[\S 11.1]{EMZ}
for the conditions coming from $J$ and $a'_{i},a''_{i},b'_{i},b''_{i}$. 

Any saddle connection $\gamma$ on a flat surface $\tilde{S}$ gives
rise to a holonomy vector $\hol(\gamma)$ as follows. For this, it
is convenient to note that a translation surface structure $\tilde{S}$
on $(S,\Sigma)$ corresponds uniquely to a holomorphic one form $\omega$
with respect to some complex structure and with zeros of $\omega$
given by $\Sigma$. For more details on this correspondence we refer
the reader to \cite{EMZ}. 

Then if the endpoints of $\gamma$ are at $z_{1},z_{2}$ we define
\begin{equation}
\hol_{\tilde{S}}(\gamma)\stackrel{\mathrm{def}}{=}\int_{z_{1}}^{z_{2}}\omega\in\C=\R^{2}.\label{eq:hol-defn}
\end{equation}
Intuitively, the holonomy vector tells us how we are translated as
we move from $z_{1}$ to $z_{2}$ along $\gamma$. Note that the definition
(\ref{eq:hol-defn}) implies that if $\gamma_{1}$ and $\gamma_{2}$
are homologous then $\hol_{\tilde{S}}(\gamma_{1})=\hol_{\tilde{S}}(\gamma_{2})$.
Therefore we obtain for any configuration $\CC$ a mapping 
\[
\hol_{\tilde{S}}:\{\text{tuples of saddle connections of type \ensuremath{\CC} on \ensuremath{\tilde{S}}\}\ensuremath{\to}}\R^{2}.
\]

For a fixed configuration $\CC$, for each $x\in\M$ with representative
translation surface $\tilde{S}$ we denote by $V_{\CC}(x)\subset\R^{2}$
the holonomy vectors arising via the map $\hol_{\tilde{S}}$ through
the tuples of saddle connections of configuration $\CC$. This collection
$V_{\CC}(x)$ depends only on $x\in\M$.

Let $\mu$ be the Masur-Veech measure on $\M$. It is proved by Eskin-Masur-Zorich
\cite[Prop. 3.1]{EMZ} that for $\mu$-a.e. $x\in\M$, the homology
class of a saddle connection is uniquely determined by its holonomy
vector. Henceforth we always assume $x$ has this property.

Given a configuration $\CC$, the endpoints $z_{1},z_{2}$ specify
a class $\sigma=\sigma(\CC)=z_{1}-z_{2}\in\tilde{H}_{0}(\Sigma,\Z)$.
This class is trivial precisely when the configuration consists
of closed saddle connections. We view $\CC$ and hence $\sigma$ as
fixed henceforth.

Note that if $\overrightarrow{\gamma}=\{\gamma_{1},\ldots,\gamma_{p}\}$
is a tuple of saddle connections of type $\CC$, since the $\gamma_{i}$
are homologous, the images of the $\gamma_{i}$ in $H_{1}^{\rel}(\Z_{q})$
are the same for each $i$, and moreover, lie in $H_{1}^{\sigma}(\Z_{q})$
where $\sigma=\sigma(\CC)$. We write $\pi_{q}(\overrightarrow{\gamma})\in H_{1}^{\sigma}(\Z_{q})$
for this class. 

Recalling the covering space $\M(\Theta_{q}^{\sigma})$ from Section
\ref{subsec:Connected-components-of}, given any $\xi\in H_{1}(S,\Sigma,\Z_{q})$,
for $x\in\M(\Theta_{q}^{\sigma})$ we let $V_{\CC,\xi}(x)$ denote
the holonomy vectors arising from any $\overrightarrow{\gamma}$ of
configuration $\CC$ with $\pi_{q}(\overrightarrow{\gamma})=\xi$.
Note that the function $V_{\CC,\xi}(x)$ is well defined on $\M(\Theta_{q}^{\sigma})$,
since points in $\M(\Theta_{q}^{\sigma})$ are isotopy classes of
translation surfaces, up to mapping classes that act trivially on
$H_{1}^{\sigma}(\Z_{q})$.

We define the counting function
\[
N^{\CC}(x;L,\xi)=|V_{\CC,\xi}(x)\cap B(L)|,\quad x\in\M(\Theta_{q}^{\sigma}),\:L\in\R_{+},\:\xi\in H_{1}^{\sigma}(\Z_{q}).
\]
The goal of this section is to use the uniform spectral gap (Theorem
\ref{thm:Uniform-spectral-gap}) to obtain uniform estimates for the
quantity $N^{\CC}(x;L,\xi)$. We will prove the following refinement of Theorem \ref{thm:main-theorem}.

\begin{thm}\label{thm:tech}
Let $\M$ be a connected component of $\H(\kappa)$ and $\CC$ a configuration of saddle connections. There are constants $Q_0\in\Z_+$, $\eta,\alpha>0$ depending on $\M$, and  $c\geq 0$ depending on  $\CC$ and $\M$, such that the following hold. For $\mu$-almost
all $x\in\M$,  for
all $q$ coprime to $Q_0$, for all $\xi\in H_{1}(S,\Sigma,\Z_{q})$ such that $\delta(\xi) = \sigma(\CC)$, we have
\[
N^\CC(x;L,\xi)=\frac{\pi c L^{2}}{|\PMod(S,\Sigma).\xi|}+O(q^{\alpha}L^{2-\eta}).
\]
The implied constant depends on $x$. 
\end{thm}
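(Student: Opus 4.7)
The plan is to lift the counting problem to the cover $\M(\Theta_q^\sigma)$ from Section~\ref{subsec:Connected-components-of}, express $N^{\CC}(x;L,\xi)$ as a mod-$q$ Siegel--Veech transform, and then combine the uniform spectral gap (Theorem~\ref{thm:Uniform-spectral-gap}) with an effective circle-averaging argument in the spirit of \cite{NRW} and \cite{ACM}. Fix a configuration $\CC$, set $\sigma=\sigma(\CC)$, and for $\tilde{x} \in \M(\Theta_q^\sigma)$ and $f:\R^2 \to \R$ of compact support define
\[
\widehat{f}_{\CC,\xi}(\tilde{x}) \stackrel{\mathrm{def}}{=} \sum_{v \in V_{\CC,\xi}(\tilde{x})} f(v).
\]
Taking $f$ to be a smoothed indicator of the disk $B(L) \subset \R^2$, we have $N^\CC(x;L,\xi) \approx \widehat{f}_{\CC,\xi}(\tilde{x})$ for any lift $\tilde{x}$ of $x$, with the pointwise error absorbed into an $O(L^{1+\epsilon})$ remainder by choosing a smoothing parameter $L^{-\epsilon}$.

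First I would establish a level-$q$ Siegel--Veech formula,
\[
\int_{\M(\Theta_q^\sigma)} \widehat{f}_{\CC,\xi}\, d\mu_{\M(\Theta_q^\sigma)} \; = \; \frac{c_\CC(\M)}{|\PMod(S,\Sigma).\xi|}\int_{\R^2} f(v)\, dv,
\]
where $c_\CC(\M)$ is the classical Siegel--Veech constant. The factor $|\PMod(S,\Sigma).\xi|^{-1}$ arises because $G_\sigma(q)$ acts transitively on the set of admissible homology classes in the orbit of $\xi$: for $\sigma\neq 0$ this is Lemma~\ref{lem:transitive-relative}, while for closed saddle connections it is Witt's theorem (Theorem~\ref{thm:transitive-on-unimodular}). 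Summing $\widehat{f}_{\CC,\xi}$ over the orbit recovers the pull-back to $\M(\Theta_q^\sigma)$ of the usual Siegel--Veech transform on $\M$, and by transitivity the integral is equidistributed among the orbit elements.

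Next I would run the NRW/ACM circle-averaging strategy on the cover. Writing $L = e^t$, $a_t = \mathrm{diag}(e^t, e^{-t})$, and $r_\theta$ for rotations, a standard trapezoid manipulation yields
\[
\frac{N^\CC(x;L,\xi)}{L^2} = \frac{1}{2\pi}\int_0^{2\pi}\widehat{\chi}_{\CC,\xi}(a_t r_\theta \tilde{x})\, d\theta + O(L^{-\epsilon'}),
\]
for suitable fixed $\chi$ supported in a bounded trapezoid. The uniform spectral gap of Theorem~\ref{thm:Uniform-spectral-gap} gives, via the standard argument of \cite{AGY,NRW}, an effective mean ergodic theorem on $\M(\Theta_q^\sigma)$: the circle average on the right-hand side differs from $\int \widehat{\chi}_{\CC,\xi}\, d\mu_{\M(\Theta_q^\sigma)}$ by at most $e^{-\eta_0 t}$ times a fixed Sobolev norm of $\widehat{\chi}_{\CC,\xi}$, where $\eta_0>0$ depends only on the size of the uniform spectral gap. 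Plugging in the level-$q$ Siegel--Veech formula produces the main term. Since the cover has degree $|G_\sigma(q)| \ll q^D$ with $D = \dim(\Z_q^{2g}\rtimes \Sp_{2g}(\Z_q))$ (from the description in Lemma~\ref{lem:G_sigma is a semidirect product}), the Sobolev norms of the lift of a fixed cutoff grow at most as $q^{D/2+O(1)}$, yielding an $L^2$-estimate
\[
\bigl\| N^\CC(\,\cdot\,;L,\xi) - \tfrac{\pi c L^2}{|\PMod(S,\Sigma).\xi|}\bigr\|_{L^2(\M(\Theta_q^\sigma))} \ll q^{\alpha/2}\, L^{2-\eta_0}.
\]

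Finally I would upgrade the $L^2$-estimate to an almost-sure pointwise estimate by a Chebyshev/Borel--Cantelli argument as in \cite{NRW}: sample $L$ along a geometric sequence $L_n = (1+\delta)^n$, use monotonicity of $N^\CC(\,\cdot\,;L,\xi)$ in $L$ to interpolate between consecutive samples (at the cost of worsening $\eta_0$ slightly to $\eta$), and sum the resulting tail probabilities over $n$, over admissible $\xi$, and over $q$ coprime to $Q_0$. The union bound over $\xi$ costs at most $|G_\sigma(q)|\ll q^D$, which is absorbed into the final exponent $\alpha$. The main obstacle is the careful bookkeeping of the $q$-dependence throughout: one must control how Sobolev norms of lifts of fixed-support functions grow with the degree $|G_\sigma(q)|$ of the cover, and verify that the union bound over $\xi$ and $q$ still leaves a single $\mu$-null exceptional set on $\M$. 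The restriction $\gcd(q,Q_0)=1$ is inherited from Theorem~\ref{thm:Uniform-spectral-gap} and is the only reason it enters the conclusion.
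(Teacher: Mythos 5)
Your strategy is the same as the paper's: lift to the cover $\M(\Theta_q^\sigma)$, prove a level-$q$ Siegel--Veech formula, run the NRW circle-averaging/spectral-gap machinery using the uniform spectral gap of Theorem~\ref{thm:Uniform-spectral-gap}, and then pass to a pointwise almost-sure statement via Borel--Cantelli, summing over $q$ coprime to $Q_0$ so as to leave a single $\mu$-null exceptional set. The bookkeeping of the $q$-dependence, the transitivity of $G_\sigma(q)$ on admissible classes (Theorem~\ref{thm:transitive-on-unimodular} and Lemma~\ref{lem:transitive-relative}), and the Sobolev-norm growth $\ll |G_\sigma(q)|^{1/2}$ all match the paper; the sum over $\gamma \in G_\sigma(q)$ at a single lift, used in the paper's proof of Theorem~\ref{thm:pointwise-ergodic}, is exactly your union bound over admissible $\xi$.

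There is, however, one concrete gap in the final interpolation step. You propose sampling along a geometric sequence $L_n = (1+\delta)^n$ with a fixed $\delta$ and using monotonicity in $L$ to interpolate, claiming this only worsens $\eta_0$ slightly. With fixed $\delta$, the interpolation error is of order $L_n^2 - L_{n-1}^2 \asymp \delta L_n^2$, which is \emph{not} $O(L^{2-\eta})$ for any $\eta>0$; the interpolation therefore swallows all the power savings, which are the entire point of the theorem. The fix, as in the paper, is to sample polynomially: taking $t_n = \tfrac{2}{\kappa'}\log n$ gives $L_n = n^{2/\kappa'}$, so $L_n^2 - L_{n-1}^2 \ll n^{4/\kappa' - 1} = L_n^{2-\kappa'/2}$, which fits inside the error term; Borel--Cantelli still applies since $\sum_n e^{-\kappa' t_n} = \sum_n n^{-2} < \infty$. (The same issue affects your choice of smoothing parameter, which must shrink with $L$ as $\delta_n = e^{-\kappa' t_n/4}$ rather than as a fixed power $L^{-\epsilon}$ independent of the spectral gap; the exponent has to be calibrated against $\kappa'$.) Separately, there is a minor normalization slip in your stated Siegel--Veech identity: with $\mu_{\M(\Theta_q^\sigma)}$ the pull-back (unnormalized, total mass $|G_\sigma(q)|$) as defined in the paper, the right-hand side should carry the stabilizer $|\mathrm{Stab}_{G_\sigma(q)}(\xi)| = |G_\sigma(q)|/|G_\sigma(q).\xi|$ rather than $1/|\PMod(S,\Sigma).\xi|$; your own ``equidistribution over the orbit'' reasoning supplies the missing $|G_\sigma(q)|$, so this is a transcription error rather than a conceptual one, but it matters when tracking the $q$-dependence of the Sobolev bound.
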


Theorem \ref{thm:tech} implies Theorem \ref{thm:main-theorem} as the $c=c(\M,\CC)$ in Theorem \ref{thm:tech} is the \emph{Siegel-Veech constant} of $\M$ and $\CC$ in the sense of Eskin-Masur-Zorich \cite{EMZ}. Eskin, Masur, and Zorich prove that given $\M$, there are only finitely many $\CC$ such that $c(\M,\CC)\neq 0$. Theorem  \ref{thm:main-theorem} follows by summing up Theorem \ref{thm:tech} over configurations of saddle connections with $p=1$, i.e., only one saddle connection in the configuration, and $\sigma(\CC) =\delta(\xi)$.

\subsection{The Siegel-Veech formula}\label{sec-SV}

We write $B(R)$ for the closed Euclidean ball of radius $R$ centered
at $(0,0)$ in $\R^{2}$. For $\psi$ continuous and compactly supported
on $\R^{2}$, the \emph{Siegel-Veech transform} of $\psi$, relative
to the fixed configuration $\CC$, is a function $\hat{\psi}:\M\to\R$
defined by 
\[
\hat{\psi}(x)\stackrel{\mathrm{def}}{=}\sum_{v\in V_{\CC}(x)}\psi(v).
\]
Veech proved in \cite{VEECHSIEGEL} (see also \cite{EM}) that if
$\psi\in C_{c}(\R^{2})$ then $\hat{\psi}\in L^{1}(\mu)$; it has
recently been shown by Athreya, Cheung and Masur \cite{ACM} that
\begin{thm}[Athreya-Cheung-Masur]
\label{thm:SVin L2}If $\psi\in C_{c}(\R^{2})$, then $\hat{\psi}\in L^{2}(\mu)$.
\end{thm}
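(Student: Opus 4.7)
The plan is to reduce to indicator functions of balls, recognize the square of the Siegel-Veech transform as a counting function on pairs of tuples of saddle connections, identify these pairs with a finite family of ``doubled'' configurations in the Eskin-Masur-Zorich sense, and invoke the ordinary Siegel-Veech formula for each of them; the main point is then to establish the finiteness of the resulting pair Siegel-Veech constants.

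First, since $\psi \in C_c(\R^2)$, choose $R > 0$ with $\supp \psi \subset B(R)$. Then $|\hat{\psi}(x)|^2 \leq \|\psi\|_\infty^2 N_{\CC}(x,R)^2$, where $N_{\CC}(x,R) = |V_{\CC}(x) \cap B(R)|$, so it is enough to prove $N_{\CC}(\cdot, R) \in L^2(\mu)$ for every fixed $R$. Expand
\[
N_{\CC}(x, R)^2 = \#\{(\garr_1, \garr_2) : \garr_i \text{ is a tuple of type } \CC \text{ on } \tilde{S},\ \|\hol_{\tilde S}(\garr_i)\| \leq R\}
\]
and split into the diagonal $\garr_1 = \garr_2$ and the off-diagonal. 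The diagonal contributes $N_{\CC}(x, R)$ pointwise, which integrates to $c(\M, \CC) \pi R^2$ by Veech's original Siegel-Veech formula.

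For the off-diagonal, each ordered pair of distinct tuples $(\garr_1, \garr_2)$ naturally produces a refined configuration $\CC^\vee$ in the Eskin-Masur-Zorich sense, by recording the combinatorial data of the union $\garr_1 \cup \garr_2$: endpoints in $\Sigma$, which saddle connections share endpoints, cyclic orders at each singularity, and angle parameters. The set of possible $\CC^\vee$ is finite because the combinatorial data is bounded in terms of $\kappa$ and $|\Sigma|$. Hence the off-diagonal contribution equals a finite sum, over such $\CC^\vee$, of
\[
\int_{\M} \widehat{\chi_{B(R)} \otimes \chi_{B(R)}}^{\,\CC^\vee}(x)\, d\mu(x),
\]
which by the Eskin-Masur-Zorich Siegel-Veech formula for $\CC^\vee$ equals $c(\M, \CC^\vee) \cdot \pi^2 R^4$.

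The main obstacle, and the main content of Athreya-Cheung-Masur, is showing that every pair Siegel-Veech constant $c(\M, \CC^\vee)$ arising in this way is finite. Some $\CC^\vee$ describe geometrically near-degenerate configurations, for example two nearly parallel closed saddle connections bounding a very thin cylinder, which push the underlying translation surface deep into the non-compact part of $\M$; a priori the associated integral could diverge. One proves convergence via a refined estimate for the Masur-Veech measure of the thin-part locus in $\M$ where several independent saddle connections are simultaneously short, strengthening the Eskin-Masur thin-part bounds that sufficed for the $L^1$ Siegel-Veech formula. Once this sharper thin-part estimate is in place, the finiteness of $c(\M, \CC^\vee)$ follows configuration-by-configuration, and summing the finitely many contributions completes the proof.
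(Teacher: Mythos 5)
The paper does not prove this statement; Theorem~\ref{thm:SVin L2} is quoted verbatim from Athreya--Cheung--Masur (arXiv:1711.08537) and used here as a black box in the Sobolev estimate of Lemma~\ref{lem:sob-estimate-SV}. So the comparison is against the cited source, not anything internal.

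Your high-level shape --- reduce to $N^{\CC}(\cdot,R)^{2}\in L^{1}(\mu)$, expand the square as a count of ordered pairs of holonomy vectors, handle the diagonal by the ordinary $L^{1}$ Siegel--Veech formula, and run a Siegel--Veech-type identity for the off-diagonal --- is a reasonable starting point, and you correctly identify that the whole difficulty is parallel short saddle connections (thin cylinders). But as written the argument has real gaps. First, ``configurations $\CC^{\vee}$ in the Eskin--Masur--Zorich sense'' is a mismatch: EMZ configurations are, by design, collections of \emph{homologous} saddle connections carrying a \emph{single} holonomy vector, so their Siegel--Veech transform is a sum over a subset of $\R^{2}$ and the EMZ formula integrates a test function on $\R^{2}$ against Lebesgue. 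A pair of non-homologous tuples carries two independent holonomy vectors in $\R^{2}\times\R^{2}$, which is outside the EMZ framework; you would instead need a Siegel-measure statement on $\R^{4}$ in the style of Veech, and then you must classify which diagonally $\SL_{2}(\R)$-invariant measures can arise as the intensity. These are not all multiples of Lebesgue on $\R^{4}$: there are invariant measures supported on $\{v_{2}\in\R v_{1}\}$ and on the diagonal. Your formula $c(\M,\CC^{\vee})\pi^{2}R^{4}$ silently assumes the intensity is Lebesgue on $\R^{4}$, which fails exactly for the parallel strata, i.e.\ exactly in the problematic regime. Second, your assertion that only finitely many $\CC^{\vee}$ contribute is not justified: the EMZ finiteness theorem is about homologous configurations, and the combinatorial data you list (angle parameters) are a priori unbounded integers. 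Third, the claim that the relevant ``pair Siegel--Veech constants'' are finite is the entire theorem, and deferring it to an unspecified ``refined thin-part estimate'' does not close the gap; note that a naive estimate of the form $N(X,1)\ll 1/\ell_{1}(X)$ combined with the measure regularity $\mu\{\ell_{1}<\epsilon\}\ll\epsilon^{2}$ gives $\int 1/\ell_{1}^{2}\,d\mu$, which already diverges logarithmically, so the argument must be structurally sharper than this. Finally, the Athreya--Cheung--Masur result holds for an arbitrary ergodic $\SL_{2}(\R)$-invariant probability measure on a stratum, and their proof goes through the Eskin--Masur theory of complexes and measure regularity rather than through Masur--Veech Siegel--Veech constants, precisely because a configuration-by-configuration EMZ-style count would not make sense for a general invariant measure. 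In summary, you've located the difficulty but the mechanism you propose to resolve it is not correct as stated.
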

The \emph{Siegel-Veech formula} proved by Veech \cite{VEECH} states
that
\begin{equation}
\int_{\M}\hat{\psi}(\nu)d\mu=c(\CC)\int_{\R^{2}}\psi d\Leb\label{eq:SV}
\end{equation}
where $c(\CC)$ is a positive \emph{Siegel-Veech constant} depending
only on $\CC$ (and $\mu)$. We need a version of the formula (\ref{eq:SV})
that takes into account the parameter $\xi\in H_{1}^{\sigma}(\Z_{q})$.
Therefore, we define a relative version of the Siegel-Veech transform:
for any $\psi\in C_{c}(\R^{2})$ we define 
\begin{align*}
{\widehat{\psi}}^{\xi}(x) & \stackrel{\mathrm{def}}{=}\sum_{v\in V_{\CC,\xi}(x)}\psi(v).
\end{align*}

\begin{thm}
\label{thm:siegelveechformula} Suppose $q$ is odd. For $\psi\in C_{c}(\R^{2},dx)$
we have ${\widehat{\psi}}^{\xi}\in L^{1}(\mu_{q}^{\sigma})$ and 
\begin{equation}
\mu_{q}^{\sigma}({\widehat{\psi}}^{\xi})=|\Stab_{G_{\sigma}(q)}(\xi)|c(\CC)\int_{\R^{2}}\psi\;d\Leb.\label{eq:SVcongruence}
\end{equation}
\end{thm}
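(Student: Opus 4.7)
The plan is to relate the relative Siegel-Veech transform $\widehat{\psi}^\xi$ on the cover to the ordinary Siegel-Veech transform $\widehat{\psi}$ on $\M$, integrate using the usual Siegel-Veech formula \eqref{eq:SV}, and use the transitivity statement of Lemma \ref{lem:transitive-relative} to extract the single-$\xi$ integral via an orbit-stabilizer argument.

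First I would establish the key fibre-wise identity
\[
\widehat{\psi}(\cover_{\Theta_q^\sigma}(x)) = \sum_{\xi' \in \delta^{-1}(\sigma_q)} \widehat{\psi}^{\xi'}(x), \qquad x\in\M(\Theta_q^\sigma).
\]
This follows because every tuple $\overrightarrow{\gamma}$ of configuration $\CC$ based at the underlying surface $\cover_{\Theta_q^\sigma}(x)$ determines a well-defined class $\pi_q(\overrightarrow{\gamma}) \in H_1^\sigma(\Z_q)$ lying in $\delta^{-1}(\sigma_q)$ (by the definition of $\sigma=\sigma(\CC)$), and this class is constant along $V_{\CC,\xi'}(x)$, partitioning $V_\CC(\cover_{\Theta_q^\sigma}(x))$ into the fibres $V_{\CC,\xi'}(x)$.

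Next I would integrate this identity over $\M(\Theta_q^\sigma)$ against $\mu_q^\sigma$. Since $\mu_q^\sigma$ is, by definition, the pullback of $\mu$ weighted by counting measure on fibres, we get
\[
\int_{\M(\Theta_q^\sigma)} \widehat{\psi}\circ \cover_{\Theta_q^\sigma}\; d\mu_q^\sigma = |G_\sigma(q)| \int_\M \widehat{\psi}\; d\mu = |G_\sigma(q)|\, c(\CC) \int_{\R^2}\psi\, d\Leb,
\]
using the Siegel-Veech formula \eqref{eq:SV} for $\M$ in the last step. On the left-hand side, after replacing $\widehat{\psi}\circ \cover$ by the sum over $\xi' \in \delta^{-1}(\sigma_q)$, I would observe that the deck action of $g \in G_\sigma(q)$ satisfies $\widehat{\psi}^{g\xi'}(gx) = \widehat{\psi}^{\xi'}(x)$, because the deck action on $\M(\Theta_q^\sigma)$ corresponds to the action of $g$ on homology classes mod $q$ while preserving the underlying surface. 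Since $\mu_q^\sigma$ is $G_\sigma(q)$-invariant, the integrals $\mu_q^\sigma(\widehat{\psi}^{\xi'})$ depend only on the $G_\sigma(q)$-orbit of $\xi'$. By Lemma \ref{lem:transitive-relative}, the action is transitive on $\delta^{-1}(\sigma_q)$, so all these integrals coincide with $\mu_q^\sigma(\widehat{\psi}^{\xi})$. There are $|\delta^{-1}(\sigma_q)| = |G_\sigma(q)|/|\Stab_{G_\sigma(q)}(\xi)|$ such $\xi'$, giving
\[
\mu_q^\sigma(\widehat{\psi}^{\xi}) = \frac{|G_\sigma(q)|}{|\delta^{-1}(\sigma_q)|}\, c(\CC)\int_{\R^2}\psi\, d\Leb = |\Stab_{G_\sigma(q)}(\xi)|\, c(\CC)\int_{\R^2}\psi\, d\Leb,
\]
which is \eqref{eq:SVcongruence}. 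Integrability in $L^1(\mu_q^\sigma)$ follows by splitting $\psi$ into positive and negative parts and applying the fibre identity: for $\psi \geq 0$ every term $\widehat{\psi}^{\xi'}$ is nonnegative and their sum is $L^1$, so each is $L^1$.

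The main subtlety to verify is the equivariance $\widehat{\psi}^{g\xi'}(gx) = \widehat{\psi}^{\xi'}(x)$, which requires unpacking the identification between the deck group of $\cover_{\Theta_q^\sigma}$ and $G_\sigma(q)$, and checking that if a tuple $\overrightarrow{\gamma}$ represents class $\xi'$ with respect to a marking at $x$, then the same tuple represents class $g\xi'$ with respect to the marking at $gx$. Everything else is bookkeeping from Lemma \ref{lem:transitive-relative}, the definition of $\mu_q^\sigma$ as a pullback measure, and the classical Siegel-Veech formula.
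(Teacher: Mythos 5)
Your overall strategy is the same as the paper's: both rest on the unfolding of the integral over $\M(\Theta_q^\sigma)$ relative to the base $\M$, the equivariance $V_{\CC,\xi}(\gamma x)=V_{\CC,\gamma^{-1}\xi}(x)$ under the deck action, the transitivity of $G_\sigma(q)$ on the classes realised by configurations of type $\CC$, and the classical Siegel--Veech formula. The paper folds over a fundamental domain for the deck group while you decompose $\widehat{\psi}\circ\cover$ fibrewise into the sum of the $\widehat{\psi}^{\xi'}$ and divide at the end; these are the same manoeuvre read in opposite directions. Your $L^1$ argument is also fine, though the paper gets it more directly from $|\widehat{\psi}^\xi|\leq \cover^*\widehat{|\psi|}$.

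One point needs tightening: your averaging step is only correct as stated when $\sigma\neq 0$. You claim that all $|\delta^{-1}(\sigma_q)|$ integrals $\mu_q^\sigma(\widehat{\psi}^{\xi'})$, $\xi'\in\delta^{-1}(\sigma_q)$, coincide with $\mu_q^\sigma(\widehat{\psi}^\xi)$, and that $|\delta^{-1}(\sigma_q)|=|G_\sigma(q)|/|\Stab_{G_\sigma(q)}(\xi)|$. For $\sigma\neq 0$ this is exactly Lemma~\ref{lem:transitive-relative}, which you cite. But for $\sigma=0$ (closed saddle connections) $\delta^{-1}(\sigma_q)=H_1(\Z_q)$ is \emph{not} a single $G_\sigma(q)$-orbit: the classes $\xi'$ that actually arise are the unimodular ones, which form a single orbit by Theorem~\ref{thm:transitive-on-unimodular}, and the non-unimodular $\xi'$ (including $0$) contribute $V_{\CC,\xi'}(x)=\emptyset$, hence $\mu_q^\sigma(\widehat{\psi}^{\xi'})=0\neq\mu_q^\sigma(\widehat{\psi}^\xi)$ in general. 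The fix is to observe that only $\xi'$ in the orbit $G_\sigma(q).\xi$ contribute nonzero terms, that these are all equal by your equivariance argument, and then divide by $|G_\sigma(q).\xi|=|G_\sigma(q)|/|\Stab_{G_\sigma(q)}(\xi)|$ rather than by $|\delta^{-1}(\sigma_q)|$. This is precisely what the paper does, citing both Lemma~\ref{lem:transitive-relative} and Theorem~\ref{thm:transitive-on-unimodular}. With that correction your proof is a valid reorganisation of the paper's.
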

\begin{proof}
First note that for $x\in\M(\Theta_{q}^{\sigma})$,

\[
|\hat{\psi}^{\xi}(x)|\leq\sum_{v\in V_{\CC,\xi}(x)}|\psi(v)|\leq\sum_{v\in V_{\CC}(x)}|\psi(v)|=\cover_{\Theta_{q}^{\sigma}}^{*}\widehat{|\psi|}(x).
\]
Since $|\psi|\in C_{c}(\R^{2})$, therefore $\widehat{|\psi|}\in L^{1}(\M,\mu)$,
and $\cover_{\Theta_{q}^{\sigma}}$ has bounded fibers for fixed $q$,
we obtain that $\hat{\psi}^{\xi}\in L^{1}(\M(\Theta_{q}^{\sigma},\mu_{q}^{\sigma}))$.
To obtain (\ref{eq:SVcongruence}), we `unfold'. Let $\mathcal{F}$
be a fundamental domain for the action of $G_{\sigma}(q)$ on $\M(\Theta_{q}^{\sigma})$.
Then

\begin{align*}
\int_{\M(\Theta_{q}^{\sigma})}{\widehat{\psi}}^{\xi}(x)d\mu_{q}^{\sigma} & =\int_{\mathcal{F}}\sum_{\gamma\in G_{\sigma}(q)}{\widehat{\psi}}^{\xi}(\gamma x)d\mu(x)=\int_{\mathcal{F}}\sum_{\gamma\in G_{\sigma}(q)}\sum_{v\in V_{\CC,\xi}(\gamma x)}\psi(v)d\mu(x).
\end{align*}
Now we use the identity $V_{\CC,\xi}(\gamma x)=V_{\CC,\gamma^{-1}\xi}(x)$
to obtain

\begin{align*}
 & =\int_{\mathcal{F}}\sum_{\gamma\in G_{\sigma}(q)}\sum_{v\in V_{\CC,\gamma^{-1}\xi}(x)}\psi(v)d\mu(x)=|\Stab_{G_{\sigma}(q)}(\xi)|\int_{\mathcal{F}}\sum_{v\in V_{\CC}(x)}\psi(v)d\mu(x)\\
 & =|\Stab_{G_{\sigma}(q)}(\xi)|\int_{\M}\hat{\psi}(x)d\mu(x)=|\Stab_{G_{\sigma}(q)}(\xi)|c(\CC)\int_{\R^{2}}\psi(x)d\Leb.
\end{align*}
For the final equality above
we used the Siegel-Veech formula (\ref{eq:SV}) for $\M$.
To obtain the second equality of the first line, we used that $G_{\sigma}(q)$ acts transitively
on the possible $\xi'\in H_{1}^{\sigma}(\Z_{q})$ that can arise from
saddle connections of type $\CC$ (Theorem \ref{thm:transitive-on-unimodular}
and Lemma \ref{lem:transitive-relative}). For example, if $q$ is prime, and $\CC$ is a configuration 
of closed saddle connections, then any non-zero element of $H_1(\Z_q)=H^\sigma_1(\Z_q)$ can arise
from a closed saddle connection and $G_\sigma(q)=G(q)$ acts transitively on $H_1(\Z_q)-\{0\}$.
\end{proof}
We now give estimates for Sobolev norms of $\hat{\psi}^{\xi}$ that
we will need for application of the spectral gap result. Let
\[
\omega=\left(\begin{array}{cc}
0 & -1\\
1 & 0
\end{array}\right).
\]
Since $\M(\Theta_{q}^{\sigma})$ carries an action of $\SL_{2}(\R)$,
we may consider the space $S_{K}(\M(\Theta_{q}^{\sigma}))$ of functions
$f$ in $L^{2}(\mu_{q}^{\sigma})$ for which the following limit exists
in the $L^{2}(\mu_{q}^{\sigma})$ norm:
\[
\omega.f\stackrel{\mathrm{def}}{=}\lim_{t\to0}\frac{1}{t}\left(\exp(t\omega)f-f\right).
\]
We equip the space of functions $S_{K}(\M(\Theta_{q}^{\sigma}))$
with the norm
\[
S_{K}(f)^{2}\stackrel{\mathrm{def}}{=}\|f\|_{L^{2}(\mu_{q}^{\sigma})}^{2}+\|\omega.f\|_{L^{2}(\mu_{q}^{\sigma})}^{2}.
\]
We will write $\partial_{\theta}$ for the partial derivative in $\R^{2}$
with respect to the angle parameter $\theta$ in polar coordinates
centered at $(0,0)$.
\begin{lem}
\label{lem:sob-estimate-SV}For $\psi\in C_{c}^{1}(\R^{2})$ with
$\supp(\psi)\subset B(r)$ we have $\hat{\psi}^{\xi}\in S_{K}(\M(\Theta_{q}^{\sigma}))$
and 
\begin{equation}
S_{K}(\hat{\psi}^{\xi})^{2}\ll_{r}|G_{\sigma}(q)|\left(\|\psi\|_{\infty}^{2}+\|\partial_{\theta}\psi\|_{\infty}^{2}\right).\label{eq:sobres}
\end{equation}
\end{lem}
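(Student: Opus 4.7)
The plan is to prove two facts that together imply the lemma. First, for any $\phi \in C_c(\R^2)$ with $\supp \phi \subset B(r)$, the $L^2$ estimate
\[
\|\hat{\phi}^{\xi}\|_{L^2(\mu_q^\sigma)}^2 \ll_r |G_\sigma(q)| \|\phi\|_\infty^2.
\]
Second, the pointwise-and-$L^2$ identity
\[
\omega \cdot \hat{\psi}^\xi = \widehat{\partial_\theta \psi}^\xi,
\]
where the left-hand side is interpreted via the limit in the definition of $S_K$. Granted these, applying the first estimate to $\phi = \psi$ and to $\phi = \partial_\theta \psi$ (both supported in $B(r)$) immediately yields the bound \eqref{eq:sobres}.

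For the $L^2$ estimate, one has the pointwise domination
\[
|\hat{\phi}^\xi(x)| \leq \sum_{v \in V_{\CC,\xi}(x)} |\phi(v)| \leq \sum_{v \in V_{\CC}(\cover_{\Theta_q^\sigma}(x))} |\phi(v)| = \widehat{|\phi|}(\cover_{\Theta_q^\sigma}(x)),
\]
so by the change-of-variables formula for the covering map,
\[
\|\hat{\phi}^\xi\|_{L^2(\mu_q^\sigma)}^2 \leq |G_\sigma(q)| \cdot \|\widehat{|\phi|}\|_{L^2(\mu)}^2.
\]
Choose $\chi \in C_c(\R^2)$ with $\mathbf{1}_{B(r+1)} \leq \chi$ and $\supp \chi \subset B(r+2)$. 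Then $|\phi| \leq \|\phi\|_\infty \chi$ pointwise, hence $\widehat{|\phi|} \leq \|\phi\|_\infty \hat{\chi}$, and the Athreya--Cheung--Masur theorem (Theorem \ref{thm:SVin L2}) gives $\|\hat{\chi}\|_{L^2(\mu)}^2 < \infty$, a constant depending only on $r$.

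For the derivative identity, note that $\exp(t\omega)$ acts on $\R^2$ as rotation by angle $t$, and the holonomy map is $\SL_2(\R)$-equivariant, so $V_{\CC,\xi}(\exp(t\omega) x) = \exp(t\omega) V_{\CC,\xi}(x)$. This yields the pointwise expression
\[
\frac{1}{t}\bigl(\hat{\psi}^\xi(\exp(t\omega)x) - \hat{\psi}^\xi(x)\bigr) = \sum_{v \in V_{\CC,\xi}(x)} \frac{\psi(\exp(t\omega)v) - \psi(v)}{t},
\]
which converges pointwise to $\widehat{\partial_\theta \psi}^\xi(x)$ as $t \to 0$. The main obstacle, and the only subtle point in the proof, is to upgrade this to $L^2(\mu_q^\sigma)$-convergence, which is what is actually required by the definition of $S_K$. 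By the mean value theorem, for $|t| \leq 1$ the difference quotients above are uniformly bounded by $\|\partial_\theta \psi\|_\infty$ and are supported on vectors lying in $B(r+1)$; so each term in the sum is dominated by $\|\partial_\theta \psi\|_\infty \chi(v)$ for the cutoff $\chi$ chosen above. The full sum is then dominated by $\|\partial_\theta \psi\|_\infty \cdot \hat{\chi} \circ \cover_{\Theta_q^\sigma}$, which lies in $L^2(\mu_q^\sigma)$ by the preceding paragraph. Dominated convergence in $L^2(\mu_q^\sigma)$ then gives the desired identity $\omega \cdot \hat{\psi}^\xi = \widehat{\partial_\theta \psi}^\xi$ with convergence in the $L^2$-norm, which simultaneously verifies $\hat{\psi}^\xi \in S_K(\M(\Theta_q^\sigma))$ and completes the proof.
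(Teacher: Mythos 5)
Your proposal is correct and follows essentially the same route as the paper: pointwise domination of $|\hat{\psi}^\xi|$ by $\cover_{\Theta_q^\sigma}^*\widehat{|\psi|}$, the bound $\|\hat{\psi}^\xi\|_{L^2(\mu_q^\sigma)}^2 \leq |G_\sigma(q)|\,\|\widehat{|\psi|}\|_{L^2(\mu)}^2$ from the counting measure on fibers, Athreya--Cheung--Masur for $L^2$-finiteness, and then the $K$-equivariance identity $\omega.\hat{\psi}^\xi=\widehat{\partial_\theta\psi}^\xi$ applied to repeat the argument for $\partial_\theta\psi$. The only (welcome) difference is that you are more self-contained in two places where the paper is terse: you replace the paper's domination by $\|\psi\|_\infty\widehat{\mathbf{1}_{B(r)}}$ with a continuous cutoff $\chi \in C_c$ so that Theorem \ref{thm:SVin L2} applies directly, and you spell out the mean-value-theorem domination and $L^2$ dominated-convergence argument that the paper delegates to a citation of \cite[Lemma 4.1]{NRW}.
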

\begin{proof}
Firstly, as in the proof of Theorem \ref{thm:siegelveechformula},
we have $|\hat{\psi}^{\xi}(x)|\leq\cover_{\Theta_{q}^{\sigma}}^{*}\widehat{|\psi|}(x)$
and since the fibers of $\cover_{\Theta_{q}^{\sigma}}$ are bounded
in size by $|G_{\sigma}(q)|$ we deduce since $\widehat{|\psi|}$
is in $L^{2}(\mu)$ by Theorem \ref{thm:SVin L2}, we have $\hat{\psi}^{\xi}\in L^{2}(\mu_{q}^{\sigma})$
and
\begin{equation}
\|\hat{\psi}^{\xi}\|_{L^{2}(\mu_{q}^{\sigma})}^{2}\leq|G_{\sigma}(q)|\|\widehat{|\psi|}\|_{L^{2}(\mu)}^{2}\ll_{r}|G_{\sigma}(q)|\|\psi\|_{\infty}^{2}.\label{eq:sob1}
\end{equation}
To obtain the last inequality we used that since $|\psi|\leq\|\psi\|_{\infty}\mathbf{1}_{B(r)}$,
$0\leq\widehat{|\psi|}\leq\|\psi\|_{\infty}\widehat{\mathbf{1}_{B(r)}}$.

Noting that $\omega.\hat{\psi}^{\xi}=\widehat{\partial_{\theta}\psi}^{\xi}$
(since the Siegel-Veech transform is $K$-equivariant, and using the
compact support of $\psi$ to interchange sums and derivatives, cf.
\cite[proof of Lemma 4.1]{NRW}), and repeating exactly the same argument
as before with $\psi$ replaced by $\partial_{\theta}\psi$, we obtain
that $\omega.\hat{\psi}^{\xi}\in L^{2}(\mu_{q}^{\sigma})$ and 
\begin{equation}
\|\omega.\hat{\psi}^{\xi}\|_{L^{2}(\mu_{q}^{\sigma})}^{2}\ll_{r}|G_{\sigma}(q)|\|\partial_{\theta}\psi\|_{\infty}^{2}.\label{eq:sob2}
\end{equation}
Therefore we have established that $\hat{\psi}^{\xi}\in S_{K}(\M(\Theta_{q}^{\sigma}))$
and adding (\ref{eq:sob1}) and (\ref{eq:sob2}) together gives (\ref{eq:sobres}).
\end{proof}

\subsection{A pointwise ergodic theorem}

In this section we will explain how the uniform spectral gap from
Theorem \ref{thm:Uniform-spectral-gap} leads to estimates for the
circle averaging operators
\[
\Ave_{K,t}[f](x)\stackrel{\mathrm{def}}{=}\int_{K}f(a_{t}k.x)dm_{K}(k)
\]
where $K=SO(2)$, $m_{K}$ is the probability Haar measure on $K$,
$a_{t}=\left(\begin{array}{cc}
e^{t} & 0\\
0 & e^{-t}
\end{array}\right)$, and the operator $\Ave_{K,t}$ acts on the Sobolev space $S_{K}(\M(\Theta_{q}^{\sigma}))$.
The following theorem is a $q$-uniform version of \cite[Theorem 3.3]{NRW}
that follows arguments of Veech \cite[Ch. 14]{VEECHSIEGEL} and Eskin-Margulis-Mozes
\cite[eq. 3.32]{EMM}.
\begin{thm}
\label{thm:L2 norm of averaging op}Suppose that $q$ is coprime to the constant $Q_{0}$ from
Theorem \ref{thm:Uniform-spectral-gap}. There is $\kappa>0$ and
$c>0$, both independent of $q$, such that for any $f\in S_{K}(\M(\Theta_{q}^{\sigma}))$
for which $\int fd\mu_{q}^{\sigma}=0$, for all $t>0$
\[
\|\Ave_{K,t}f\|_{2}^{2}\leq ce^{-t\kappa}S_{K}(f)^{2}.
\]
\end{thm}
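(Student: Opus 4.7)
The plan is to adapt the proof of \cite[Theorem 3.3]{NRW} with the uniform spectral gap from Theorem \ref{thm:Uniform-spectral-gap} replacing the (non-uniform) spectral gap of \cite{AGY} used there. The main task is to verify that the constants $c$ and $\kappa$ produced by that argument can be chosen independently of $q$ for $q$ coprime to $Q_0$.

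First I would decompose the orthogonal complement of the constants in $L^2(\M(\Theta_q^\sigma))$ as a direct integral of irreducible unitary $\SL_2(\R)$-representations,
\begin{equation*}
L^2_0(\M(\Theta_q^\sigma)) = \int^\oplus \pi_s \, d\nu_q(s).
\end{equation*}
By Theorem \ref{thm:Uniform-spectral-gap}, for $q$ coprime to $Q_0$ the spectral measure $\nu_q$ is supported away from complementary series $\Comp^u$ with $u > 1-\eta_0$, for a fixed $\eta_0 > 0$ independent of $q$. Simultaneously, decompose $f$ into $K$-isotypic components $f = \sum_{n \in \Z} f_n$, where $f_n$ lies in the weight $n$ subspace for the action of $K = SO(2)$. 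Since $\omega$ generates the Lie algebra of $K$, the defining formula of $S_K$ yields
\begin{equation*}
S_K(f)^2 = \sum_{n \in \Z} (1 + n^2) \|f_n\|_2^2.
\end{equation*}

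Next I would note that $\Ave_{K,t}$ factors as $P_K \circ U(a_t)$, where $U$ is the unitary $\SL_2(\R)$-action and $P_K$ is the orthogonal projection onto $K$-invariant vectors. Working fiberwise in the direct integral, it suffices to prove in each irreducible constituent $\pi_s$ the classical matrix coefficient estimate
\begin{equation*}
\bigl| \langle \pi_s(a_t) v_n, v_0 \rangle \bigr| \ll (1 + |n|)^{A} e^{-\kappa t} \|v_n\| \|v_0\|
\end{equation*}
for a weight-$n$ vector $v_n$ and a unit weight-$0$ vector $v_0$, with some fixed exponent $A$ and $\kappa > 0$ depending only on $\eta_0$. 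For principal series this follows from the $e^{-t}$ decay of the elementary spherical function; for complementary series $\Comp^u$ with $u \leq 1 - \eta_0$ one obtains decay of the form $e^{-\eta_0 t}$; in both regimes the polynomial dependence on $n$ comes from the explicit Jacobi function formulas for matrix coefficients between $K$-types, as used in \cite[Ch. 14]{VEECHSIEGEL} and \cite[eq. 3.32]{EMM}. Integrating over the spectral measure and combining with the Sobolev weight $1 + n^2$ then gives
\begin{equation*}
\|\Ave_{K,t} f\|_2^2 \leq c\, e^{-\kappa t} \sum_n (1 + n^2) \|f_n\|_2^2 = c\, e^{-\kappa t} S_K(f)^2.
\end{equation*}

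The central point to check is the $q$-uniformity of $c$ and $\kappa$. All the matrix coefficient bounds above depend only on the spectral gap parameter $\eta_0$, and the polynomial $n$-dependence is a feature of $\SL_2(\R)$ representation theory, independent of the underlying space. Since Theorem \ref{thm:Uniform-spectral-gap} supplies a uniform $\eta_0$ for every $q$ coprime to $Q_0$, the constants $c$ and $\kappa$ are uniform in $q$ as required. The only subtlety, and what I would regard as the main thing to verify carefully, is that the $K$-isotypic decomposition interacts with the abstract direct integral decomposition in the expected way so that the fiberwise matrix-coefficient bounds assemble into a genuine $L^2$ estimate on $\M(\Theta_q^\sigma)$; but this is a purely representation-theoretic matter, identical to what is handled in \cite{NRW}, and proceeds mutatis mutandis.
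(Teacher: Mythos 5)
Your proposal takes essentially the same approach as the paper: both defer to \cite[Theorem 3.3]{NRW} and verify that the constants $c,\kappa$ produced there depend only on the spectral gap, which Theorem \ref{thm:Uniform-spectral-gap} makes uniform in $q$ coprime to $Q_0$. You unpack the internals of the \cite{NRW} argument (direct integral decomposition, $K$-isotypic weights, matrix coefficient decay), which the paper just cites; the one point the paper flags and you omit is that the $\SL_2(\R)$ action on $\M(\Theta_q^\sigma)$ is ergodic for such $q$ --- you need this so that the orthogonal complement of the constants decomposes into nontrivial irreducibles only, since the spectral gap hypothesis by itself controls the complementary series but does not preclude invariant vectors orthogonal to constants, which would defeat the claimed decay for mean-zero $f$.
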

\begin{proof}
For fixed $q$ coprime to $Q_{0}$, so that the $\SL_{2}(\R)$ action on $\M_{q}^{\sigma}$
is ergodic, this follows directly from \cite[Theorem 3.3]{NRW}. The
referenced theorem applies to probability measures, but both sides
of the inequality behave in the same way when scaling the measure.
It remains to check that $c$ and $\kappa$ can be chosen independent
of $q$. The constant $\kappa$ depends only on the spectral gap so
can be taken to be uniform by Theorem \ref{thm:Uniform-spectral-gap}.
Inspection of the proof of \cite[Theorem 3.3]{NRW} shows that $c$
also only depends on the spectral gap, so is also uniform.
\end{proof}
We now use the Borel-Cantelli lemma, in a variant of the argument
from \cite[Theorem 3.5]{NRW}, to obtain a pointwise ergodic theorem
for Siegel-Veech transforms.
\begin{thm}[Pointwise ergodic theorem for Siegel-Veech transforms]
\label{thm:pointwise-ergodic}There exist constants $\kappa',\alpha>0$
such that the following hold. Let $Q_{0}$ be the constant from Theorem
\ref{thm:Uniform-spectral-gap}. For any $r>0$, for any sequence
$\{\psi_{n}\}_{n\in\N}$ with $\psi_{n}\in C_{c}^{1}(\R^{2})$ and
$\supp(\psi_{n})\subset B(r)$, for any sequence of real numbers $\{t_{n}\}_{n\in\N}$
with $t_{n}>0$ for all $n$ and $\sum e^{-t_{n}\kappa'}<\infty$,
there exists a set of full measure $\M'\subset\M$ such that for every
$x'\in\M'$, there exists $n_{x'}\geq0$ such that for all $q$ coprime to $Q_0$,
for all $n\geq n_{x'}$, for all $x\in\cover_{\Theta_{q}^{\sigma}}^{-1}(x')$,
\[
|\Ave_{K,t_{n}}[{\widehat{\psi_{n}}}^{\xi}](x)-|G_{\sigma}(q)|^{-1}\mu_{q}^{\sigma}({\widehat{\psi_{n}}}^{\xi})|\leq e^{-\kappa't_{n}}q^{\alpha}\sqrt{\|\psi_{n}\|_{\infty}^{2}+\|\partial_{\theta}\psi_{n}\|_{\infty}^{2}}.
\]
\end{thm}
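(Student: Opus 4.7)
The plan is to combine the $L^2$-decay estimate of Theorem \ref{thm:L2 norm of averaging op} with the Sobolev bound of Lemma \ref{lem:sob-estimate-SV}, push the resulting inequality down to the base $\M$ via a fibre sum, and close with Chebyshev's inequality followed by Borel--Cantelli. The point of passing to the base is that the required full measure set $\M'\subset\M$ must be the same for all $q$, all $\xi$, and all $n$ simultaneously.

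For each $(n,q,\xi)$ I will work with the mean-zero part
\[
f_{n,q,\xi} \stackrel{\mathrm{def}}{=} \widehat{\psi_n}^\xi - |G_\sigma(q)|^{-1}\mu_q^\sigma(\widehat{\psi_n}^\xi)
\]
of $\widehat{\psi_n}^\xi$ on $\M(\Theta_q^\sigma)$. Since the subtracted constant is preserved by $\Ave_{K,t_n}$, the quantity to be controlled in the theorem is exactly $|\Ave_{K,t_n}f_{n,q,\xi}(x)|$. Combining Theorem \ref{thm:L2 norm of averaging op} with Lemma \ref{lem:sob-estimate-SV} yields
\[
\|\Ave_{K,t_n}f_{n,q,\xi}\|_{L^2(\mu_q^\sigma)}^2 \ll_r e^{-t_n\kappa}|G_\sigma(q)|\bigl(\|\psi_n\|_\infty^2 + \|\partial_\theta\psi_n\|_\infty^2\bigr),
\]
where $\kappa>0$ is the uniform constant supplied by Theorem \ref{thm:L2 norm of averaging op}.

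Next I push the estimate to $\M$ by introducing the fibre sum $F_{n,q,\xi}(x')=\sum_{x\in\cover^{-1}_{\Theta_q^\sigma}(x')}|\Ave_{K,t_n}f_{n,q,\xi}(x)|^2$, whose $\mu$-integral equals the left-hand side above and which pointwise dominates $|\Ave_{K,t_n}f_{n,q,\xi}(x)|^2$ at every $x$ in the fibre over a given $x'$. Chebyshev's inequality at the threshold $e^{-2\kappa't_n}q^{2\alpha}(\|\psi_n\|_\infty^2+\|\partial_\theta\psi_n\|_\infty^2)$ then shows that the failure set
\[
E_{n,q,\xi} = \Bigl\{x'\in\M : \max_{x\in\cover^{-1}(x')}|\Ave_{K,t_n}f_{n,q,\xi}(x)| > e^{-\kappa't_n}q^\alpha\sqrt{\|\psi_n\|_\infty^2+\|\partial_\theta\psi_n\|_\infty^2}\Bigr\}
\]
has $\mu(E_{n,q,\xi})\ll_r e^{-t_n(\kappa-2\kappa')}|G_\sigma(q)|q^{-2\alpha}$.

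It remains to union-bound over $\xi\in H_1^\sigma(\Z_q)$, $q$ coprime to $Q_0$ and $n\in\N$ and apply Borel--Cantelli. By Lemma \ref{lem:G_sigma is a semidirect product} one has $|G_\sigma(q)|\leq q^D$ with $D=\dim(\Z^{2g}\rtimes\Sp_{2g})$, and $|H_1^\sigma(\Z_q)|\leq q^{2g+1}$. Choosing $\alpha$ large enough that $\sum_q |G_\sigma(q)|\,|H_1^\sigma(\Z_q)|\,q^{-2\alpha}<\infty$ (any $\alpha>(D+2g+1)/2$ suffices, consistently with the exponent $\alpha=D+2$ from the remark following Theorem \ref{thm:main-theorem}) and taking $\kappa'=\kappa/3$ so that $\kappa-2\kappa'\geq\kappa'$, the assumption $\sum_n e^{-t_n\kappa'}<\infty$ yields $\sum_n\mu\bigl(\bigcup_{q,\xi}E_{n,q,\xi}\bigr)<\infty$, and Borel--Cantelli produces the required $\M'$ together with thresholds $n_{x'}$. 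The main technical issue is the explicit polynomial bookkeeping in $q$: the Sobolev bound contributes a factor $|G_\sigma(q)|$ and the union over $\xi$ a further factor $|H_1^\sigma(\Z_q)|$, and both have to be absorbed into $q^{2\alpha}$, which is what fixes $\alpha$.
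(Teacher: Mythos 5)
Your proposal is correct and follows essentially the same route as the paper's proof: combine the $q$-uniform $L^2$-decay of $\Ave_{K,t}$ (Theorem \ref{thm:L2 norm of averaging op}) with the Sobolev bound of Lemma \ref{lem:sob-estimate-SV}, transfer the estimate to the base $\M$ via the fibre structure, then apply Chebyshev and Borel--Cantelli with a weight $q^{-\alpha}$ chosen to absorb the polynomial-in-$q$ factors. The only implementation difference is cosmetic: you apply Chebyshev directly to the fibre sum of $|\Ave\, f|^{2}$ and union over $\xi$ explicitly, whereas the paper aggregates $q^{-\alpha}\sum_{\gamma\in G_{\sigma}(q)}|\Ave\, f(\gamma x)|$ into a single function $f_{n}^{(Q)}$ on $\M$, bounds its $L^{1}$ norm by Cauchy--Schwarz (so the sum over deck transformations already accounts for all $\xi$ in the orbit), and applies Markov — the two bookkeepings are interchangeable and lead to the same conclusion and essentially the same choice of $\alpha$ and $\kappa'$.
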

\begin{proof}
Let $f_{n,q}\stackrel{\mathrm{def}}{=}{\widehat{\psi_{n}}}^{\xi}-|G_{\sigma}(q)|^{-1}\mu_{q}^{\sigma}({\widehat{\psi_{n}}}^{\xi})$.
Noting that $\mu_{q}^{\sigma}(f_{n,q})=0$ we have $\|f_{n,q}\|_{L^{2}(\mu_{q}^{\sigma})}\leq\|{\widehat{\psi_{n}}}^{\xi}\|_{L^{2}(\mu_{q}^{\sigma})}$.
Also noting $\omega.f_{n,q}=\omega.{\widehat{\psi_{n}}}^{\xi}$ we
have
\[
S_{K}^{2}(f_{n,q})\leq S_{K}^{2}({\widehat{\psi_{n}}}^{\xi}).
\]
Thus by Lemma \ref{lem:sob-estimate-SV} there is a constant $c=c(r)>0$
such that
\begin{equation}
S(f_{n,q})^{2}\leq c|G_{\sigma}(q)|\left(\|\psi_{n}\|_{\infty}^{2}+\|\partial_{\theta}\psi_{n}\|_{\infty}^{2}\right).\label{eq:sobfnq}
\end{equation}
Now we set up to use the Borel-Cantelli lemma. We define for some
$\alpha,\kappa'>0$ to be chosen

\[
f_{n}^{(Q)}(x')\stackrel{\mathrm{def}}{=}\sum_{q \: : \: (q,Q_0)=1 , \:q\leq Q}q^{-\alpha}\sum_{\gamma\in G_{\sigma}(q)}|\Ave_{K,t_{n}}[f_{n,q}](\gamma x)|,\quad f_{n}^{(Q)}:\M\to\R_{\geq0},
\]
where $x$ is an arbitrary lift of $x'$ to $\M_{q}^{\sigma}$. Let
\[
U_{n,Q}\stackrel{\mathrm{def}}{=}\left\{ x'\in\M\::\:f_{n}^{(Q)}(x')\geq e^{-\kappa't_{n}}\sqrt{\|\psi_{n}\|_{\infty}^{2}+\|\partial_{\theta}\psi_{n}\|_{\infty}^{2}}\right\} .
\]
Then $U_{n,Q}$ is a $\mu$-measurable subset of $\M$ and 
\begin{equation}
\mu(U_{n,Q})\leq\left(e^{-\kappa't_{n}}\sqrt{\|\psi_{n}\|_{\infty}^{2}+\|\partial_{\theta}\psi_{n}\|_{\infty}^{2}}\right)^{-1}\int_{U_{n,Q}}f_{n}^{(Q)}d\mu.\label{eq:BC1}
\end{equation}
Furthermore, 
\begin{equation}
\int_{U_{n,Q}}f_{n}^{(Q)}d\mu\leq\int_{\M}f_{n}^{(Q)}d\mu=\sum_{Q\geq q\geq q_{0},q\text{ odd}}q^{-\alpha}\int|\Ave_{K,t_{n}}[f_{n,q}]|d\mu_{q}^{\sigma}\label{eq:BC2}
\end{equation}
by unfolding. Now using Cauchy-Schwarz and Theorem \ref{thm:L2 norm of averaging op}
together with (\ref{eq:sobfnq}) we get 
\begin{equation}
\int|\Ave_{K,t_{n}}[f_{n,q}]|d\mu_{q}^{\sigma}\leq|G_{\sigma}(q)|c^{1/2}e^{-t_{n}\kappa/2}\sqrt{\|\psi\|_{\infty}^{2}+\|\partial_{\theta}\psi\|_{\infty}^{2}}\label{eq:BC3}
\end{equation}
where $\kappa$ is the constant given by Theorem \ref{thm:L2 norm of averaging op}.
Putting (\ref{eq:BC1}), (\ref{eq:BC2}), and (\ref{eq:BC3}) together
we obtain
\[
\mu(U_{n,Q})\leq c^{1/2}\sum_{q \: : \: (q,Q_0)=1 , \:q\leq Q}q^{-\alpha}|G_{\sigma}(q)|e^{(\kappa'-\kappa/2)t_{n}}.
\]
Now define $U_{n}=\bigcup_{Q\geq q_{0}}U_{n,Q}$. Note that $U_{n,Q}\subset U_{n,Q+1}$.
We have that $U_{n}$ is $\mu$-measurable. Note for for some $D=D(\M)$,
$|G_{\sigma}(q)|\ll q^{D}$ so \textbf{we now choose }$\alpha=D+2$
so that $\mu(U_{n,Q})\leq c'e^{(\kappa'-\kappa/2)t_{n}}$ for all
$Q$. Therefore $\mu(U_{n})\leq c'e^{(\kappa'-\kappa/2)t_{n}}$. \textbf{We
now choose $\kappa'=\kappa/4$ }so $\mu(U_{n})\leq c'e^{-\kappa t_{n}/4}$
and hence
\[
\sum_{n}\mu(U_{n})\ll_{\M,r}\sum_{n}e^{-\kappa't_{n}/4}<\infty
\]
is finite by hypothesis. Therefore by the Borel-Cantelli lemma there
exists $\M'\subset\M$ with $\mu(\M')=1$, so that for any $x'\in\M'$
there exists $n_{x'}$ such that for any $n\geq n_{x'}$, $x'\notin U_{n}$.
Finally, the statement $x\notin U_{n}$ implies that for any $x\in\cover_{\Theta_{q}^{\sigma}}^{-1}(x')$,
\begin{align*}
|\Ave_{K,t_{n}}[{\widehat{\psi_{n}}}^{\xi}](x)-|G_{\sigma}(q)|^{-1}\mu_{q}^{\sigma}({\widehat{\psi_{n}}}^{\xi})| & =|\Ave_{K,t_{n}}[f_{n,q}](x)|\\
 & \leq q^{\alpha}e^{-\kappa't_{n}}\sqrt{\|\psi_{n}\|_{\infty}^{2}+\|\partial_{\theta}\psi_{n}\|_{\infty}^{2}}
\end{align*}
as required.
\end{proof}

\subsection{The counting argument: proof of Theorem \ref{thm:tech}}\label{proof}

\begin{proof}[Proof of Theorem \ref{thm:tech}]
Here we follow the argument of Nevo-R\"{u}hr-Weiss \cite[\S 5]{NRW},
that is a refinement of arguments in \cite[Lemma 3.6]{EMM} and \cite[Lemma 3.4]{ESKINMASUR}.
We are able to avoid the arguments of \cite[\S 4]{NRW} about `cutting
off the cusp' due to our use of the result of Athreya-Cheung-Masur,
Theorem \ref{thm:SVin L2}. Here we will not try to optimize constants.

Let $\delta>0$ be a small parameter that will control smoothing of
indicator functions in the following argument. We let $\theta=\delta^{1/2}$.

Let $W_{1}$ be the triangle in $\R^{2}$ with vertices at $(0,0)$,
$(\sin\theta,\cos\theta)$, $(-\sin\theta,\cos\theta)$. Let $W_{2}$
be the triangle with vertices $(0,0)$, $(\tan\theta,1)$, $(-\tan\theta,1)$.
Then $W_{1}$ and $W_{2}$ are similar with the same apex angle $2\theta$
at $(0,0)$ and $W_{1}\subset W_{2}$.

Now $t>1$ will be a parameter related to the counting parameter.
The diagonal element $a_{-t}$ maps $W_{1}$ and $W_{2}$ to thinner
triangles with apex angle given by $2\theta_{t}$ such that
\[
\tan\theta_{t}=e^{-2t}\tan\theta.
\]
The same arguments as in \cite[pg. 11]{NRW} give
\begin{equation}
\Ave_{K,t}[\widehat{\mathbf{1}_{W_{1}(\theta)}}^{\xi}](x)\leq\frac{\theta_{t}}{\pi}N^{\CC}(x;e^{t},\xi)\leq\Ave_{K,t}[\widehat{\mathbf{1}_{W_{2}(\theta)}}^{\xi}](x).\label{eq:sandwich1}
\end{equation}
Nevo, R\"{u}hr, and Weiss prove \cite[pg. 12]{NRW} there exist continuously
differentiable functions $\psi_{(\pm,\delta)}$ on $\R^{2}$ such
that 
\begin{equation}
\mathbf{\psi_{(-,\delta)}\leq}\mathbf{1}_{W_{1}(\theta)}\leq\mathbf{1}_{W_{2}(\theta)}\leq\psi_{(+,\delta)},\label{eq:pointwise}
\end{equation}
\begin{equation}
\|\psi_{(\pm,\delta)}\|_{\infty}\leq1,\quad\|\partial_{\theta}\psi_{(\pm,\delta)}\|_{\infty}\ll\delta^{-1},\label{eq:banach-bound-for-smoothed-functions}
\end{equation}
\[
\supp(\psi_{(\pm,\delta)})\subset B(1),
\]
and
\begin{equation}
\int_{\R^{2}}\psi_{(\pm,\delta)}d\Leb=e^{2t}\theta_{t}+O(\delta^{1/2}e^{2t}\theta_{t}).\label{eq:integral-of-approximant}
\end{equation}
The result of (\ref{eq:pointwise}) together with (\ref{eq:sandwich1})
is that

\[
\frac{\pi}{\theta_{t}}\Ave_{K,t}[\widehat{\psi_{(-,\delta)}}^{\xi}](x)\leq N^{\CC}(x;e^{t},\xi)\leq\frac{\pi}{\theta_{t}}\Ave_{K,t}[\widehat{\psi_{(+,\delta)}}^{\xi}](x).
\]
We now choose a sequence of $\{\delta_{n}\}_{n\in\N}$ (hence $\{\theta_{n}\}_{n\in\N})$
as well as a sequence of $\{t_{n}\}_{n\in\N}$. Repeating the previous
arguments for each $\theta_{n}$ we obtain sequences $\{\psi_{(+,\delta_{n})}^{n}\}_{n\in\N}$
and $\{\psi_{(-,\delta_{n})}^{n}\}_{n\in\N}$ such that

\begin{equation}
\frac{\pi}{\theta_{t_{n}}}\Ave_{K,t_{n}}[\widehat{\psi_{(-,\delta_{n})}^{n}}^{\xi}](x)\leq N^{\CC}(x;e^{t_{n}},\xi)\leq\frac{\pi}{\theta_{t_{n}}}\Ave_{K,t_{n}}[\widehat{\psi_{(+,\delta_{n})}^{n}}^{\xi}](x)\label{eq:sandwich2}
\end{equation}
for all $x\in\M(\Theta_{q}^{\sigma})$. We will apply Theorem \ref{thm:pointwise-ergodic}
to both sequences $\{\psi_{(\pm,\delta_{n})}^{n}\}_{n\in\N}$ together
with the sequence $\{t_{n}\}_{n\in\N}$. We choose $t_{n}$ such that
\[
t_{n}\stackrel{\mathrm{def}}{=}\frac{2}{\kappa'}\log n
\]
where $\kappa'$ is the constant from Theorem \ref{thm:pointwise-ergodic},
so that $\sum_{n}e^{-\kappa't_{n}}=\sum_{n}n^{-2}$ is finite. We
choose
\[
\delta_{n}\stackrel{\mathrm{def}}{=}e^{-\kappa't_{n}/4}.
\]

Therefore we may apply Theorem \ref{thm:pointwise-ergodic} to both
sequences $\{\psi_{(\pm,\delta_{n})}^{n}\}_{n\in\N}$ to obtain 2
sets of full measure, $\M'_{\pm}$, such that for all $x'\in\M'\stackrel{\mathrm{def}}{=}\M'_{+}\cap\M'_{-}$,
there exists $n_{x'}\geq0$ such that for all $q$ coprime to $Q_0$,
for all $n\geq n_{x'}$, for all $x\in\cover_{\Theta_{q}^{\sigma}}^{-1}(x')$, 
\begin{equation}
|\Ave_{K,t_{n}}[{\widehat{\psi_{(\pm,\delta_{n})}^{n}}}^{\xi}](x)-|G_{\sigma}(q)|^{-1}\mu_{q}^{\sigma}({\widehat{\psi_{(\pm,\delta_{n})}^{n}}}^{\xi})|\leq e^{-\kappa't_{n}}q^{\alpha}\sqrt{\|\psi_{(\pm,\delta_{n})}^{n}\|_{\infty}^{2}+\|\partial_{\theta}\psi_{(\pm,\delta_{n})}^{n}\|_{\infty}^{2}}.\label{eq:temp1}
\end{equation}
Applying (\ref{eq:banach-bound-for-smoothed-functions}) to the right
hand side gives that it is
\begin{equation}
\leq e^{-\kappa't_{n}}q^{\alpha}\sqrt{1+O(\delta_{n}^{-2})}=O(q^{\alpha}e^{-\kappa't_{n}/2}).\label{eq:temp2}
\end{equation}
Also the Siegel-Veech formula (Theorem \ref{thm:siegelveechformula})
together with (\ref{eq:integral-of-approximant}) gives that 
\begin{align}
\mu_{q}^{\sigma}({\widehat{\psi_{(\pm,\delta_{n})}^{n}}}^{\xi}) & =|\Stab_{G_{\sigma}(q)}(\xi)|c(\CC)\int_{\R^{2}}\psi_{(\pm,\delta_{n})}^{n}d\Leb\label{eq:temp3}\\
 & =|\Stab_{G_{\sigma}(q)}(\xi)|c(\CC)\left(e^{2t_{n}}\theta_{t_{n}}+O(\delta_{n}^{1/2}e^{2t_{n}}\theta_{t_{n}})\right)\\
 & =|\Stab_{G_{\sigma}(q)}(\xi)|c(\CC)\left(e^{2t_{n}}\theta_{t_{n}}+O(e^{(2-\kappa'/8)t_{n}}\theta_{t_{n}})\right).\label{eq:temp4}
\end{align}
Putting (\ref{eq:temp1}), (\ref{eq:temp2}), and (\ref{eq:temp4})
together, along with the orbit-stabilizer theorem, gives that (under
the same conditions on $x,n,q$)
\[
\Ave_{K,t_{n}}[{\widehat{\psi_{(\pm,\delta_{n})}^{n}}}^{\xi}](x)=\frac{c(\CC)}{|G_{\sigma}(q).\xi|}\left(e^{2t_{n}}\theta_{t_{n}}+O(e^{(2-\kappa'/8)t_{n}}\theta_{t_{n}})\right)+O(q^{\alpha}e^{-\kappa't_{n}/2}),
\]
so from (\ref{eq:sandwich2})
\begin{equation}
N^{\CC}(x;e^{t_{n}},\xi)=\frac{\pi c(\CC)}{|G_{\sigma}(q).\xi|}e^{2t_{n}}+O(e^{(2-\kappa'/8)t_{n}})+O(q^{\alpha}e^{(2-\kappa'/2)t_{n}})\label{eq:uninterpolated}
\end{equation}
where in the second term on the right hand side we used simply $|G_{\sigma}(q).\xi|\geq1$
and in the final term we used $\theta_{t_{n}}^{-1}\ll e^{2t_{n}}$.

To conclude the argument we must interpolate (\ref{eq:uninterpolated})
to values of $t$ with $t_{n-1}\leq t\leq t_{n}$. This follows as
$N^{\CC}(x;e^{t},\xi)$ is monotone increasing in $t$ and, for example,
to compare the main term at $t_{n}$ and $t_{n-1}$, we have 
\[
e^{2t_{n}}-e^{2t_{n-1}}=n^{4/\kappa'}-(n-1)^{4/\kappa'}\ll n^{4/\kappa'}.n^{-1}=e^{(2-\kappa'/2)t_{n}}.
\]
These differences can be absorbed into the error term.
\end{proof}

\vspace*{\fill}
\noindent Michael Magee, \\Department of Mathematical Sciences, \\ Durham University,\\  Durham DH1 3LE, U.K. \\  {\tt michael.r.magee@durham.ac.uk}\\ \\
\noindent Rene R\"{u}hr, \\ Department of Mathematics,\\ Tel Aviv University, \\ Israel \\ {\tt rene@post.tau.ac.il }\\ \\

\newpage
\appendix

\section{By Rodolfo Guti\'{e}rrez-Romo}
\label{ap}

	The purpose of this appendix is to prove Proposition  \ref{prop:Strong-Approximation}, that is, the Strong Approximation (SA) hypothesis for Rauzy--Veech monoids.

	Recall that there is a ``canonical'' way to obtain a translation surface from a permutation $\pi$. Namely, we consider the translation surface with length data $\lambda_\alpha = 1$ and suspension data $\tau_\alpha = \pi_{\mathrm{b}}(\alpha) - \pi_{\mathrm{t}}(\alpha)$ for each $\alpha \in \mathcal{A}$. We denote the polygon induced by these data by $P_\pi \subseteq \mathbf{C}$. Moreover, recall that $\Lambda_\pi \leq \mathrm{PMod}(S_\pi, \Sigma_\pi)$ is the Rauzy--Veech monoid associated with the permutation $\pi$. Let $\Xi_{\pi,q}^\sigma \leq \mathrm{Aut}(H_1^\sigma(\mathbf{Z}_q))$ be the group generated by $\Theta_q^\sigma(\Lambda_\pi)$ (with $q$ possibly equal to $\infty$). Let $\Gamma(q)$ be kernel of the action of $\Gamma$ on $H_1(\mathbf{Z}_q)$.
	
	We start by adapting the classification of Rauzy--Veech groups \cite{AMY, GR} to our context. Indeed, for each permutation $\pi$ there exist natural maps
	\[
		H_1(S_\pi \setminus \Sigma_\pi, \mathbf{Z}) \to H_1(S_\pi, \mathbf{Z}) \to H_1(S_\pi, \Sigma_\pi, \mathbf{Z}),
	\]
	the former being surjective and the latter injective. We identify $H_1(S_\pi, \mathbf{Z}) = H_1(\mathbf{Z})$ with the image of the second map.
	
	The Rauzy--Veech group induces a left action on $H_1(S_\pi, \Sigma_\pi) = H_1^{\mathrm{rel}}(\mathbf{Z})$ that preserves $H_1^\sigma(\mathbf{Z})$ \cite[Section 6]{GR}. The Rauzy--Veech group $\Xi_{\pi,\infty}^\sigma$ is precisely the groups of symplectic automorphisms induced by the restriction of this action to $H_1^\sigma(\mathbf{Z})$. Thus, we have that:
	
	\medskip
	\begin{thm}
		The Rauzy-Veech group $\Xi_{\pi,\infty}^0$ contains $\Theta_\infty^0(\Gamma(2))$ \cite[Theorem 1.1]{AMY}\cite[Theorem 1.1]{GR}.
	\end{thm}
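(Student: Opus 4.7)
The plan is to deduce the statement directly from the explicit identification of the Rauzy-Veech group $\Xi_{\pi,\infty}^0$ inside $\Sp(H_1(\Z),\cap)$ that is available in the cited works. The qualitative form of the Zorich conjecture (finite index in $\Sp(H_1(\Z),\cap)$) is insufficient for this; what I would actually use is that \cite{AMY} and \cite{GR} pin down $\Xi_{\pi,\infty}^0$ as a specific, named finite-index subgroup of the symplectic group.

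First I would split according to whether the component $\M$ containing $\pi$ is hyperelliptic or not. In the hyperelliptic case, \cite[Theorem 1.1]{AMY} identifies $\Xi_{\pi,\infty}^0$ as the affine symplectic group associated to the mod~$2$ fixed-point data of the hyperelliptic involution acting on $H_1(\F_2)$. In the non-hyperelliptic case, \cite[Theorem 1.1]{GR} identifies $\Xi_{\pi,\infty}^0$ as either the full $\Sp(H_1(\Z),\cap)$ (when $\M$ carries no spin invariant) or, in spin components, the stabilizer in $\Sp(H_1(\Z),\cap)$ of an Arf invariant of a spin structure on $H_1(\F_2)$.

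The second step is the elementary observation that each of the subgroups of $\Sp(H_1(\Z),\cap)$ produced above is defined purely by a constraint on the reduction of a symplectic matrix modulo $2$; that is, each is the preimage, under the reduction map $\Sp(H_1(\Z),\cap)\twoheadrightarrow\Sp(H_1(\F_2),\cap)$, of a specific subgroup of $\Sp(H_1(\F_2),\cap)$. Any such preimage automatically contains the kernel of reduction mod~$2$, and by definition of $\Gamma(2)$ as the kernel of the action of $\Gamma$ on $H_1(\Z_2)$, this kernel is exactly $\Theta_\infty^0(\Gamma(2))$. Combining the two steps yields the theorem.

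The main subtle point, rather than a genuine obstacle, is matching the combinatorial description of the Rauzy-Veech monoid (via loops in the Rauzy diagram) with the algebraic mod~$2$ description needed for the $\Gamma(2)$-containment. This matching is already carried out in \cite{AMY,GR}, which is why both references are required; once the explicit identifications are in hand, the rest is routine and deserves to be quoted rather than rederived.
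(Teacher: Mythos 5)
Your proposal is correct and matches the intent of the paper, which gives no proof beyond the two citations: both \cite{AMY} and \cite{GR} identify $\Xi_{\pi,\infty}^0$ as the full preimage under $\Sp(H_1(\Z),\cap)\to\Sp(H_1(\F_2),\cap)$ of an explicit subgroup, so $\Xi_{\pi,\infty}^0$ contains the kernel of reduction mod~$2$, and that kernel is precisely $\Theta_\infty^0(\Gamma(2))$ (using that $\Theta_\infty^0$ surjects onto $\Sp(H_1(\Z),\cap)$). Your reading is the one the appendix relies on; the only thing you needn't have bothered with is the exact equality $\Theta_\infty^0(\Gamma(2))=\ker(\bmod\ 2)$, since the containment $\Theta_\infty^0(\Gamma(2))\subseteq\ker(\bmod\ 2)$ is all that is used.
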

	\medskip

	We now fix an odd integer $q > 2$ until the end of the section. We obtain the following corollary:
	
	\begin{cor} \label{cor:gamma2}
		 We have that $\Xi_{\pi,q}^0 = G(q)$.
	\end{cor}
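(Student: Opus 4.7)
The plan is to combine the containment $\Xi_{\pi,\infty}^0 \supseteq \Theta_\infty^0(\Gamma(2))$ provided by the Avila--Matheus--Yoccoz and Guti\'errez-Romo theorem cited above with strong approximation for $\Sp_{2g}$. Recall that by Lemma \ref{lem:G(q)}, for odd $q$ the target group is $G(q) = \Sp(H_1(\Z_q),\cap)$, so the containment $\Xi_{\pi,q}^0 \subseteq G(q)$ is automatic, and only the reverse inclusion needs to be established.

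First I would reduce the cited absolute-homology containment modulo $q$: since $\Theta_q^0$ factors through $\Theta_\infty^0$ composed with the reduction map $\Sp(H_1(\Z),\cap) \to \Sp(H_1(\Z_q),\cap)$, applying this reduction to $\Xi_{\pi,\infty}^0 \supseteq \Theta_\infty^0(\Gamma(2))$ yields
\[
\Xi_{\pi,q}^0 \;\supseteq\; \bar\Theta_q^0(\Gamma(2)),
\]
where $\bar\Theta_q^0$ is the corresponding reduction of $\Gamma(2)$ into $\Sp(H_1(\Z_q),\cap)$. It therefore suffices to prove that the level-$2$ principal congruence subgroup $\Gamma(2) \leq \Sp_{2g}(\Z)$ surjects onto $\Sp_{2g}(\Z_q)$ for every odd $q$.

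For this, the key input is classical strong approximation for $\Sp_{2g}$: the reduction map $\Sp_{2g}(\Z) \to \Sp_{2g}(\Z/n\Z)$ is surjective for every positive integer $n$. Applying this with $n = 2q$ and combining with the Chinese remainder isomorphism $\Sp_{2g}(\Z_{2q}) \cong \Sp_{2g}(\Z_2) \times \Sp_{2g}(\Z_q)$ (valid since $\gcd(2,q)=1$), the subgroup $\Gamma(2)$, being precisely the preimage of $\{1\} \times \Sp_{2g}(\Z_q)$ under the mod-$2q$ reduction, maps onto the full factor $\Sp_{2g}(\Z_q) = G(q)$. This gives $\bar\Theta_q^0(\Gamma(2)) = G(q)$, and combining with the containment above yields $\Xi_{\pi,q}^0 = G(q)$.

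There is no real obstacle here: the only nonformal ingredient is strong approximation for $\Sp_{2g}$, which is standard, and the odd hypothesis on $q$ is used precisely to ensure coprimality with the level $2$ at which the Rauzy--Veech group is known to contain the congruence subgroup. The argument is short enough that it can be presented in a couple of lines, merely invoking the cited theorem together with strong approximation.
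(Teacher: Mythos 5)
Your proof is correct, but it takes a genuinely different route from the paper. You invoke classical strong approximation for $\Sp_{2g}(\Z)$ (surjectivity of reduction mod $n$ for all $n$) together with the Chinese Remainder Theorem to show that the level-$2$ principal congruence subgroup surjects onto $\Sp_{2g}(\Z_q)$ for odd $q$. The paper's proof avoids strong approximation entirely and is purely elementary: it observes that every Dehn twist $T_c$ satisfies $T_c^2 \in \Gamma(2)$ and $T_c^q \in \Gamma(q)$, so that for odd $q$ one has $\Theta_q^0(T_c) = \Theta_q^0(T_c^{q+1}) = \Theta_q^0\bigl((T_c^2)^{(q+1)/2}\bigr) \in \Theta_q^0(\Gamma(2))$; since Dehn twists generate $\Gamma$, this already gives $\Theta_q^0(\Gamma(2)) = \Theta_q^0(\Gamma) = G(q)$. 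The paper's argument is self-contained and needs nothing beyond the generation of the mapping class group by Dehn twists and the Picard--Lefschetz formula; your argument is more structural, works directly at the level of arithmetic groups, and is the argument one would reach for without the mapping-class-group picture in hand, but it relies on the (standard, but heavier) black box of strong approximation for $\Sp_{2g}$. You should also note that identifying $\Theta_\infty^0(\Gamma(2))$ with the full principal congruence subgroup of level $2$ in $\Sp_{2g}(\Z)$ uses surjectivity of $\Theta_\infty^0 : \Gamma \to \Sp_{2g}(\Z)$; this is classical but worth making explicit, as your write-up silently conflates $\Gamma(2) \leq \PMod(S,\Sigma)$ with its image in $\Sp_{2g}(\Z)$.
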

	
	\begin{proof}
		We will show that $\Theta_q^0(\Gamma(2)) = G(q)$. This is a consequence of the following two facts: every $p$-th power of a Dehn twist belongs to $\Gamma(p)$, and $\Gamma$ is generated by Dehn twists. Indeed, if $c$ is a simple closed curve then $(T_c^2)^{(q+1)/2} \in \Gamma(2)$. Since $T_c^q \in \Gamma(q)$, we conclude that $\Theta_q^0(T_c) = \Theta_q^0(T_c^{q+1}) = \Theta_q^0((T_c^2)^{(q+1)/2}) \in \Theta_q^0(\Gamma(2))$.
	\end{proof}
		
	We will also need the following definitions originally introduced by Avila and Viana \cite[Section 5]{AV}.
	
	\begin{defn}
		Let $\pi$ be an irreducible permutation on an alphabet $\mathcal{A}$. Let $\alpha \in \mathcal{A}$ and let $\pi'$ be the permutation on $\mathcal{A} \setminus \{\alpha\}$ obtained by erasing the letter $\alpha$ from the top and bottom rows of $\pi$. If $\pi'$ is irreducible, we say that it is a \emph{simple reduction} of $\pi$.
	\end{defn}
	
	\begin{defn}
		Let $\pi'$ be an irreducible permutation on an alphabet $\mathcal{A}'$ not containing $\alpha$. Let $\beta, \beta' \in \mathcal{A}'$ such that $(\alpha_{\mathrm{t}, 1}, \alpha_{\mathrm{b}, 1}) \neq (\beta, \beta')$. We define the permutation $\pi$ on the alphabet $\mathcal{A}' \cup \{\alpha\}$ by inserting $\alpha$ just before $\beta$ in the top row and just before $\beta'$ in the bottom row of $\pi'$. We say that $\pi$ is a \emph{simple extension} of $\pi'$. 
	\end{defn}
	
	These notions allow us to transfer information about Rauzy--Veech monoids between Rauzy classes associated with different strata by adding or removing letters. This is made precise by the following lemma \cite[Lemma 2.4, Lemma 2.7, Lemma 6.3]{GR}:
	
	\begin{lem} \label{lem:copy}
		The Rauzy--Veech group $\Xi_{\pi,\infty}^0$ contains the Dehn twists along the curves $\gamma_\alpha$ joining the two $\alpha$-sides of the polygon $P_\pi$ for each $\alpha \in \mathcal{A}$, oriented upwards. Moreover, these Dehn twists generate $\Xi_{\pi,\infty}^0$. In particular, if $\pi'$ is a permutation on an alphabet $\mathcal{A}' \subsetneq \mathcal{A}$ and $\pi$ is obtained from $\pi'$ by a sequence of genus-preserving simple extensions, then the group generated by $\{T_{\gamma_\alpha}\}_{\alpha \in \mathcal{A}'}$ contains $\Theta_\infty^0(\Gamma(2))$.
	\end{lem}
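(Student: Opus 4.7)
The plan is to split the lemma into its three constituent parts and prove each using the structure of the Rauzy--Veech induction. For the first claim (that each $T_{\gamma_\alpha}$ lies in $\Xi_{\pi,\infty}^0$), I would explicitly construct a loop $L_\alpha$ in the Rauzy diagram based at $\pi$ whose associated mapping class $[H_{L_\alpha}]$ equals the Dehn twist along $\gamma_\alpha$. A natural candidate is the loop formed by performing Rauzy--Veech moves so that $\alpha$ is always the winner until one returns to $\pi$. Tracking how $P_\pi$ is cut and reassembled at each step should verify that the resulting self-homeomorphism is isotopic to a single Dehn twist along the curve joining the two $\alpha$-sides, oriented upwards.

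For the generation claim, I would use that the monoid $\Lambda_\pi$ is generated by elementary return loops based at $\pi$ and analyze each in terms of its action on relative homology. Each arrow in the Rauzy diagram induces a transvection whose direction is determined by the winning and losing letters. Composing along a return loop, the resulting action on $H_1^{\mathrm{rel}}(\mathbf{Z})$ should factor as a product of transvections along the classes $[\gamma_\alpha]$, which are precisely the actions induced by the Dehn twists $T_{\gamma_\alpha}$. Combined with the first part this gives $\Xi_{\pi,\infty}^0 = \langle T_{\gamma_\alpha} : \alpha \in \mathcal{A}\rangle$.

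For the ``in particular'' statement, I would apply the two previous parts to $\pi'$, yielding $\Xi_{\pi',\infty}^0 = \langle T_{\gamma_\alpha} : \alpha \in \mathcal{A}'\rangle$ with the curves interpreted in $S_{\pi'}$. The theorem cited just above (Avila--Matheus--Yoccoz and Guti\'errez-Romo) then gives $\langle T_{\gamma_\alpha} : \alpha \in \mathcal{A}'\rangle \supseteq \Theta_\infty^0(\Gamma(2))$. The genus-preserving hypothesis ensures that simple extensions only add marked points without changing the topology of the underlying closed surface, so the curves $\gamma_\alpha$ for $\alpha \in \mathcal{A}'$ and the corresponding Dehn twists transfer faithfully between $S_{\pi'}$ and $S_\pi$ when viewed at the level of absolute homology, and the containment persists.

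The main obstacle is the generation step: showing that every return loop decomposes into the specific transvections coming from the $T_{\gamma_\alpha}$. This requires either an induction on loop length or a direct combinatorial computation tracking the effect of each elementary Rauzy--Veech move on a chosen curve, and the precise matching of products of Dehn twists with monoid elements is the most delicate piece of bookkeeping.
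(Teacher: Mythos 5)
The paper does not prove this lemma at all; it is a packaging of results quoted directly from Guti\'errez-Romo's paper \cite{GR} (Lemmas~2.4, 2.7 and 6.3 there, as the inline citation indicates). So there is no proof in the paper to compare against --- your proposal is a from-scratch re-derivation of a cited black box.

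Within that re-derivation, the first part is on solid ground: the ``keep the same winner until you return'' loop is the standard mechanism by which the symplectic transvection $T_{\gamma_\alpha}$ is realized inside the Rauzy--Veech group, and tracking the polygon re-assembly does verify it. The ``in particular'' part is also outlined reasonably, though be careful: the assertion that the curves $\gamma_\alpha$ for $\alpha\in\mathcal{A}'$ ``transfer faithfully'' across a genus-preserving simple extension is exactly the content of \cite[Lemma~6.3]{GR}, not something that is automatic. Simple extensions add a side to the polygon and can move/split singularities, so one needs to check that the suspension curves $\gamma_\alpha$, $\alpha\in\mathcal{A}'$, still represent the same absolute homology classes, i.e.\ that the sub-polygon picture carries over. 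This is a real lemma, not a formality, and your write-up should either prove it or cite it.

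The genuine gap is in the generation step, and it is more than bookkeeping. A single Rauzy arrow with winner $\alpha$ and loser $\beta$ acts on $\mathbf{Z}^{\mathcal{A}}$ by the elementary map $e_\beta\mapsto e_\beta+e_\alpha$, fixing all other $e_\gamma$. This is \emph{not} the symplectic transvection $T_{[\gamma_\alpha]}$: the Dehn twist along $\gamma_\alpha$ moves every class that intersects $\gamma_\alpha$, whereas the arrow moves only $e_\beta$. Consequently the action of a general return loop is a product of elementary matrices that does not visibly factor through the $T_{\gamma_\alpha}$; the claim that ``the resulting action on $H_1^{\rel}(\mathbf{Z})$ should factor as a product of transvections along the classes $[\gamma_\alpha]$'' is exactly the thing that needs proving, and it is false arrow-by-arrow. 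There is no obvious induction on loop length that rescues this, because intermediate partial products of arrows need not lie in $\langle T_{\gamma_\alpha}\rangle$ (they are not even loops, hence not in the Rauzy--Veech group). In \cite{GR} the equality between the Rauzy--Veech group and the group generated by the $T_{\gamma_\alpha}$ is established by a structural/algebraic analysis rather than by decomposing each return loop. For the purposes of the present paper, the correct move is to quote \cite[Lemmas~2.4, 2.7, 6.3]{GR}, as the paper does.
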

	
	We need some control on the way in which the singularities are split by simple extensions. When we speak about ``minimal strata'', we refer to strata with only one singularity.
	\begin{lem} \label{lem:insertions}
		Let $\mathcal{C}$ be a connected component of a stratum of the moduli space of genus-$g$ translation surface having $n$ singularities, for $n \geq 2$. There exists a permutation $\pi'$ on an alphabet $\mathcal{A}'$ representing some component of a minimal stratum, and a permutation $\pi$ on an alphabet $\mathcal{A}$ representing $\mathcal{C}$ which is obtained from $\pi'$ by a sequence of genus-preserving simple extensions. Moreover, a lift of $\sigma$ belongs to the $\mathbf{Z}$-submodule generated $\{e_\alpha\}_{\alpha \in \mathcal{A} \setminus \mathcal{A}'}$.
	\end{lem}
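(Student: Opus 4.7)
The plan is to argue by induction on $n$, building $\pi$ from a minimal-stratum permutation $\pi'$ via a sequence of genus-preserving simple extensions. Equivalently, starting from a permutation representing $\mathcal{C}$, one performs a sequence of genus-preserving simple reductions until landing in a component of a minimal stratum. The existence of such a sequence would follow from the geometric ``breaking up zeros'' correspondence of Kontsevich--Zorich and Eskin--Masur--Zorich: any connected component of a stratum of genus $g$ with $n \geq 2$ singularities is obtained from a connected component of a stratum of genus $g$ with $n-1$ singularities by locally breaking a singularity along a small slit, and this geometric operation is realized at the level of zippered rectangle data by a genus-preserving simple extension obtained by inserting a new letter corresponding to the slit. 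Iterating yields a chain $\pi' = \pi_1 \to \pi_2 \to \cdots \to \pi_n = \pi$ of genus-preserving simple extensions.

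For the moreover part, I would arrange this sequence so that at some step $k$ the added letter $\alpha_k$ realizes the split that separates $z_1$ from $z_2$: the newly inserted edge in the polygon $P_{\pi_{k+1}}$ has one endpoint at (a preimage of) $z_1$ and the other at (a preimage of) $z_2$. Then $e_{\alpha_k} \in H_1^{\mathrm{rel}}(\mathbf{Z})$ is an arc whose image in $\tilde{H}_0(\Sigma, \mathbf{Z})$ is precisely $\pm \sigma$, up to boundary contributions from the other letters in $\mathcal{A} \setminus \mathcal{A}'$ (which are inserted after $\alpha_k$ and correspond to further splittings of singularities other than the one containing $z_1$ and $z_2$). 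These contributions also lie in $\mathrm{span}_{\mathbf{Z}}\{e_\alpha\}_{\alpha \in \mathcal{A} \setminus \mathcal{A}'}$, so the lift of $\sigma$ is supported on $\mathcal{A} \setminus \mathcal{A}'$ as required.

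The main obstacle is two-fold. First, one must verify that the geometric breaking procedure is always realizable as a genuine simple extension of permutations (preserving irreducibility and non-degeneracy). This would be handled by explicitly producing a polygon presentation of the broken surface, reading off a permutation, and checking that inserting the letter before the correct letters in the top and bottom rows gives a simple extension in the sense of \'A.~Avila and M.~Viana, in the spirit of the explicit constructions of Boissy and Guti\'{e}rrez-Romo. Second, one must arrange that the specific split separating $z_1$ from $z_2$ appears in the chain. This requires a judicious choice of the initial reduction path: by the flexibility in which pair of singularities to merge at each simple reduction step, and by possibly traversing the Rauzy diagram to reach a convenient representative $\pi$, one can guarantee that $z_1, z_2$ are among the first pair to be merged when reducing from $\pi$ toward $\pi'$, after which the remaining merges preserve the identification of $z_1$ with $z_2$ and leave the lift of $\sigma$ expressible on the newly-removed letters.
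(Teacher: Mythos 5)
Your general strategy---start from a minimal-stratum permutation $\pi'$ and build up $\pi$ by genus-preserving simple extensions, tracking $\sigma$ through the inserted letters---is the same as the paper's. However, your proposal defers precisely the steps that carry the weight, and there are two concrete gaps.

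First, you do not address how to land in the correct \emph{connected component}. Guaranteeing that the chain of simple extensions arrives at the right component is not automatic: one has to control the spin parity at each step, and the hyperelliptic components $\mathcal{H}(g-1,g-1)^{\mathrm{hyp}}$ behave differently from all others. The paper handles this by exhibiting explicit representatives $\tau_{2g}$ and $\sigma_{2g}$ of the two spin-parity components of $\mathcal{H}(2g-2)$, citing \cite[Lemma 6.4]{GR} that genus-preserving simple extensions preserve spin parity and \cite[Lemma 6.5]{GR} that singularities can be split in any prescribed way, and then treating $\mathcal{H}(g-1,g-1)^{\mathrm{hyp}}$ and $\mathcal{H}(g-1,g-1)^{\mathrm{nonhyp}}$ by separate ad hoc constructions. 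Your appeal to the geometric ``breaking up zeros'' correspondence does not by itself say which connected component you land in, nor that the combinatorial realization preserves irreducibility and nondegeneracy; those verifications are the content of the cited lemmas, not something one can wave away by analogy with the geometric picture.

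Second, your argument for the ``moreover'' clause aims too narrowly. You want a single inserted letter $\alpha_k$ to realize the split separating $z_1$ from $z_2$, and then to absorb ``boundary contributions'' from later insertions. As stated this is circular: the corrections you describe live in $\tilde{H}_0(\Sigma,\mathbf{Z})$, and to subtract them off you would already need to know they lie in $\delta\bigl(\mathrm{span}_{\mathbf{Z}}\{e_\alpha\}_{\alpha\in\mathcal{A}\setminus\mathcal{A}'}\bigr)$, which is the thing to be proved. The paper sidesteps this by arranging the insertions so that the $n-1$ classes $\{\delta(e_\alpha)\}_{\alpha\in\mathcal{A}\setminus\mathcal{A}'}$ are linearly independent in $\tilde{H}_0(\Sigma,\mathbf{Z})\cong\mathbf{Z}^{n-1}$ (indeed form a basis), so that \emph{every} element of $\tilde{H}_0$, in particular $\sigma$, lifts into the span of the new $e_\alpha$. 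This is achieved by the concrete rule of always inserting before the letter $2$ in the top row, with endpoints chosen so that at the $k$-th insertion the new side joins the order-$2m_k$ singularity to the singularity that will carry the remaining order. You would need to supply an argument of comparable precision for your ``judicious choice of reduction path'' to work; asserting flexibility in merging order is not enough without exhibiting a path that actually produces the required independence.
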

	\begin{proof}
		Let 
		{\small\[
			\tau_d =
			\begin{pmatrix}
				1 & 2 & 3 & \cdots & d-5 & d-4 & d-3 & d-2 & d-1 & d \\
				d & d-1 & d-2 & \cdots & 6 & 3 & 2 & 5 & 4 & 1
			\end{pmatrix}
		\]}
		for $d \geq 6$ and
		{\small\[
			\sigma_d =
			\begin{pmatrix}
				1 & 2 & 3 & \cdots & d-7 & d-6 & d-5 & d-4 & d-3 & d-2 & d-1 & d \\
				d & d-1 & d-2 & \cdots & 8 & 3 & 2 & 7 & 6 & 5 & 4 & 1
			\end{pmatrix}
		\]}
		for $d \geq 8$. For even $d = 2g$, we have that $\tau_d$ represents $\mathcal{H}(2g - 2)^{\mathrm{odd}}$ and that $\sigma_d$ represents $\mathcal{H}(2g - 2)^{\mathrm{even}}$ if $d \mod 8 \in \{0, 6\}$, and that the spin parities are reversed if $d \mod 8 \in \{2, 4\}$ \cite[Lemma 4.1]{GR}.
	
		Let $\mathcal{A}' = \{1, \dotsc, 2g\}$. We will consider several cases:
		\begin{itemize}
			\item If $\mathcal{C} = \mathcal{H}(g - 1, g - 1)^{\mathrm{hyp}}$, we take
			\[
				\pi' = \begin{pmatrix}
					1 & 2 & \cdots & 2g \\ 2g & 2g - 1 & \cdots & 1
				\end{pmatrix}, \quad 
				\pi = \begin{pmatrix}
					1 & 2g+1 & 2 & \cdots & 2g \\ 2g & 2g - 1 & \cdots & 2g+1 & 1
				\end{pmatrix}.
			\]
			\item If $\mathcal{C}= \mathcal{H}(g - 1, g - 1)^{\mathrm{nonhyp}}$ for $g \geq 4$, we take $\pi' = \tau_{2g}$ and insert a new letter before $2g$ in the top row and after $2g$ in the bottom row \cite[Lemma 4.1]{GR}.
			\item If $\mathcal{C} = \mathcal{H}(2m_1, \dotsc, 2m_n)^{\mathrm{spin}}$, where $\mathrm{spin} \in \{\mathrm{even}, \mathrm{odd}\}$, we take $\pi' \in \{\sigma_{2g}, \tau_{2g}\}$ having the same spin parity as $\mathcal{C}$. We can insert letters iteratively using the following two facts: genus-preserving simple extensions preserve the spin parity \cite[Lemma 6.4]{GR}, and singularities can be split in any way using simple extensions \cite[Lemma 6.5]{GR}.
			
			\noindent To ensure that the lift of $\sigma$ belongs to the $\mathbf{Z}$-submodule generated by the sides associated with the new letters, we move along the strata in the following way (each arrow represents a simple extension):
			\[
				\mathcal{H}(2g - 2) \to \mathcal{H}(2m_1, 2g - 4) \to \mathcal{H}(2m_1, 2m_2, 2g - 6) \to \dotsb {}
			\]
			Moreover, every letter is inserted before the letter $2$ in the top row, and in a way such that the left endpoint of the $\alpha$-side of $M_\pi$ is the singularity of order $2m_k$, and its right endpoint is the singularity of order $2g - 2 - 2k$, where $\alpha$ is the letter inserted at the $k$-th step. These conditions imply that the set $\{\delta(e_\alpha)\}_{\alpha \in \mathcal{A} \setminus \mathcal{A}'}$ is linearly independent, so a lift of $\sigma$ belongs to the $\mathbf{Z}$-submodule generated by $\{e_\alpha\}_{\alpha \in \mathcal{A} \setminus \mathcal{A}'}$.
			\item If $\mathcal{C}$ represents a connected stratum, then we insert letters in the same way as in the previous case, starting from either $\tau_{2g}$ or $\sigma_{2g}$.
			
			\end{itemize}
	\end{proof}
	
	We can now prove the proposition:
	\begin{proof}[Proof (Proposition 2.10)]
		
		Let $\mathcal{C}$ be a connected component of a stratum of the moduli space of genus-$g$ translation surfaces. Let $\pi'$ and $\pi$ be the permutations in \Cref{lem:insertions}. Thus, we have that a lift $\sigma'$ of $\sigma$ belongs to the $\mathbf{Z}$-submodule generated by $\{e_\alpha\}_{\alpha \in \mathcal{A} \setminus \mathcal{A}'}$.
		
		Recall that there exists a (non-canonical) isomorphism $G_\sigma(q) \to H_1(\mathbf{Z}_q) \rtimes G(q)$ defined by $g \mapsto (g\sigma_q' - \sigma_q', \beta(g))$, where $\beta$ is the surjection in the short exact sequence \eqref{eq:group-SES}. We will identify $G_\sigma(q)$ with $H_1(\mathbf{Z}_q) \rtimes G(q)$.
		
		Let $T_{\gamma_\alpha} \in \Xi_{\pi,\infty}^\sigma \leq \mathrm{Aut}(H_1^\sigma(\mathbf{Z}))$ be the Dehn twist along the curve $\gamma_\alpha$. By \Cref{lem:copy}, the group generated by $\{T_{\gamma_\alpha}|_{H_1(\mathbf{Z})}\}_{\alpha \in \mathcal{A}'}$ contains $\Theta_\infty^0(\Gamma(2))$. Moreover, $T_{\gamma_\alpha}$ fixes $\sigma'$ if $\alpha \in \mathcal{A}'$, since it fixes every $e_{\alpha'}$ for $\alpha' \neq \alpha$. 
		
		Let $\Xi$ be the group generated by the action of $\{T_{\gamma_\alpha}\}_{\alpha \in \mathcal{A}'}$ on $H_1^\sigma(\mathbf{Z}_q)$. We obtain that $\Xi$ fixes $\sigma'$ and, by \Cref{cor:gamma2}, that it contains the subgroup $0 \rtimes G(q)$ of $H_1(\mathbf{Z}_q) \rtimes G(q)$. Since $\sigma' \neq 0$ and is generated by $\{e_\alpha\}_{\alpha \in \mathcal{A} \setminus \mathcal{A}'}$, there exists $\alpha \in \mathcal{A} \setminus \mathcal{A}'$ such that $T_{\gamma_\alpha}$ does not fix $\sigma_q'$. Moreover, there exists $g \in \Xi$ such that $T_{\gamma_\alpha}|_{H_1(\mathbf{Z})}$ and $g|_{H_1(\mathbf{Z})}$ coincide mod $q$. Therefore, $T_{\gamma_\alpha}g^{-1}$ fixes $H_1(\mathbf{Z}_q)$ and does not fix $\sigma_q'$, so $T_{\gamma_\alpha}g^{-1}$ can be written as $(h, \mathrm{Id}) \in H_1(\mathbf{Z}_q) \rtimes G(q)$, where $h = T_{\gamma_\alpha}\sigma_q' - \sigma_q' \neq 0$. This concludes the proof, since $(h, \mathrm{Id}).(0, g) = (h, g)$ and $G(q)$ acts transitively on $H_1(\mathbf{Z}_q)$.
	\end{proof}
\vspace*{\fill}
\noindent Rodolfo Guti\'{e}rrez-Romo, \\
Institut de Math\'{e}matiques de Jussieu -- Paris Rive Gauche,\\
 UMR 7586, B\^{a}timent Sophie Germain, 75205 \\
PARIS Cedex 13, France.\\ 
 {\tt rodolfo.gutierrez@imj-prg.fr}\\ \\

\newpage 

\def\cprime{$'$}

\end{document}